\newcommand{\Z}{\mathbb{Z}}
\newcommand{\PP}{\mathbb{P}}
\newcommand{\E}{\mathcal{E}}
\newcommand{\LL}{\mathcal{L}}
\newcommand{\C}{\mathcal{C}}
\newcommand{\En}{\mathcal{E}_{norm}}
\newcommand{\OO}{\mathcal{O}}
\newcommand{\kk}{\mathbb{k}} 
\DeclareMathOperator{\HH}{H}
\DeclareMathOperator{\depth}{depth}
\DeclareMathOperator{\Ext}{Ext}
\DeclareMathOperator{\cod}{codim}
\DeclareMathOperator{\cok}{coker}
\DeclareMathOperator{\exc}{expcodim}
\DeclareMathOperator{\rk}{rank}
\DeclareMathOperator{\ch}{char}
\DeclarePairedDelimiter\floor{\lfloor}{\rfloor}
\newcommand{\verteq}{\rotatebox{270}{$\,\footnotesize{\cong} \ $}}
\newcommand{\equalto}[2]{\underset{\scriptstyle\overset{\mkern4mu\verteq}{#2}}{#1}}
\theoremstyle{plain}
\newtheorem*{prop*}{Proposition}
\newtheorem*{teo*}{Theorem}
\newtheorem*{cor*}{Corollary}
\newtheorem{teo}{Theorem}[section]
\newtheorem{cor}[teo]{Corollary}
\newtheorem{con}[teo]{Conjecture}
\newtheorem{prop}[teo]{Proposition}
\newtheorem{lemma}[teo]{Lemma}
\theoremstyle{definition}
\newtheorem{df}[teo]{Definition}
\newtheorem{ex}[teo]{Example}
\theoremstyle{remark}
\newtheorem{rmk}[teo]{Remark}
\title{The non-Lefschetz locus of conics}
\author{Emanuela Marangone}
\email{emanuela.marangone@umanitoba.ca}
\address{Department of Mathematics \\
	University of Manitoba \\
	Winnipeg, MB R3T2N2, Canada}
\subjclass[2020]{13E10, 13F20, 13D02, 13C40, 14M05 (primary); \\ 13H10, 14H60, 14M12, 14M10, 13A02, 14F06 (secondary)}
\keywords{Weak Lefschetz property, Strong Lefschetz property, Rank 2 vector bundles, Jumping conics, Jumping lines}
\thanks{First and foremost, I would like to express my sincere gratitude to my Ph.D. advisor Juan Migliore, for his guidance during this project. A special thanks to Eric Riedl, Matthew Scalamandre, Susan Cooper, and Matthew Weaver for helpful discussions regarding various aspects of this project.\\
The results in this article form part of my Ph.D. thesis at the University of Notre Dame.
I would also like to acknowledge that this research was supported in part by the Pacific Institute for the Mathematical
Sciences.}
\begin{document}

	\begin{abstract}
A graded Artinian algebra $A$  has the \emph{Weak Lefschetz Property} if there exists a linear form $\ell$ such that the multiplication map $\times\ell:[A]_i\to [A]_{i+1}$ has maximum rank in every degree. The linear forms satisfying this property form a Zariski-open set; its complement is called the \emph{non-Lefschetz locus} of $A$.

In this paper, we investigate analogous questions for degree-two forms rather than lines. We prove that any complete intersection $A=\kk[x_1,x_2,x_3]/(f_1,f_2,f_3)$, with $\ch \kk=0$, has the Strong Lefschetz Property at range $2$, i.e. there exists a linear form $\ell\in [R]_1$, such that the multiplication map $\times \ell^2: [A]_i\to [A]_{i+2}$ has maximum rank in each degree.

Then, we focus on the forms of degree 2 such that $ \times C: [A]_i\to [A]_{i+2}$ fails to have maximum rank in some degree $i$. The main result shows that the \emph{non-Lefschetz locus of conics} for a general complete intersection $A=\kk[x_1,x_2,x_3]/(f_1,f_2,f_3)$ has the expected codimension as a subscheme of $\PP^5$. The hypothesis of generality is necessary. We include examples of monomial complete intersections for which the non-Lefschetz locus of conics has different codimension. 

To extend a similar result to the first cohomology modules of rank $2$ vector bundles over $\PP^2$, we explore the connection between non-Lefschetz conics and \emph{jumping conics}. The non-Lefschetz locus of conics is a subset of the jumping conics. Unlike the case of lines, this can be a proper subset when $\E$ is semistable with first Chern class even. 
	\end{abstract}

    	\maketitle

\tableofcontents

\section{Introduction}

A graded Artinian algebra $A$  has the \emph{Weak Lefschetz Property (WLP)} if the multiplication map by a linear form $\ell$ has maximum rank in every degree. If the multiplication map by any power of a general linear form $\ell$ has maximum rank as well, we say that $A$ has the \emph{Strong Lefschetz Property (SLP)}.
This last property also implies the \emph{Maximal Rank Property (MRP)} i.e, for any $d$, the multiplication by a general form of degree $d$ always has maximum rank.
In this paper, we study the forms of degree 2 such that $ \times C: [A]_i\to [A]_{i+2}$ fails to have maximum rank in some degree $i$. 

The most famous result in the study of the Lefschetz Property states that every Artinian monomial complete intersection over a field $\kk$ of characteristic zero has the SLP \cite{28}, \cite{26} \cite{29}. As a consequence of this theorem, a general complete intersection $\kk[x_1,\dots,x_n]/(f_1,\dots, f_n)$ with fixed generator degrees has the SLP. However, it is an open question to determine whether every complete intersection has the SLP or even the WLP.  The most important result in this direction proves the Weak Lefschetz Property for $n=3$. To prove this theorem, Harima--Migliore--Nagel--Watanabe \cite{13} applied the Grauert--M\"{u}lich Theorem to the syzygy bundle, in this case, a locally free sheaf of rank $2$ over $\PP^2$. 

The study of the \emph{non-Lefschetz locus}, defined as the set of linear forms that fail to have maximum rank in some degree,  started with the work of Boij--Migliore--Miró-Roig--Nagel \cite{main}. In  \cite{main}, the authors conjecture that the non-Lefschetz locus of a general complete intersection has the expected codimension. This conjecture has been proven for general complete intersections if $n=3$ \cite{main}. 
Failla--Flores--Peterson \cite{FFP21} generalize the study of the Lefschetz Property to the first cohomology module $\HH_*^1(\PP^2,\E)$ of any vector bundle $\E$ of rank $2$ over $\PP^2$ and prove that these finite-length modules have the WLP.  The non-Lefschetz locus of $\HH_*^1(\PP^2,\E)$ is exactly the set of jumping lines, and it has the expected codimension under the assumption that $\E$ is general \cite{vb}.

We investigate analogous questions for higher-degree forms rather than lines. This problem is connected with the SLP at range $2$ and with the Maximal Rank Property (MRP). In analogy with \cite{main}, we endow  
 $$\mathcal{C}_{A,i}=\{C\in[R]_2 \ | \times C: [A]_i\to [A]_{i+2} \text{ does not have max rank} \}$$ with a scheme structure given by the ideal $I(\mathcal{C}_{A,i})$ of maximal minors of a suitable $h_{i+2} \times h_{i}$  matrix of linear forms. $\mathcal{C}_A$ is the subscheme of $\PP^{\binom{n+1}{2}-1}$ defined by the ideal $I(\mathcal{C}_A)=\bigcap I(\mathcal{C}_{A,i})$.
In this paper, we will mainly focus on the case $n=3$. In this case, we refer to $\C_A$ as \emph{the non-Lefschetz locus of conics of $A$}.
The main result provides a version of \cite[Theorem 5.3]{main} for conics.
\begin{teo*}[\ref{main CI}]
	The non-Lefschetz locus of conics for a general complete intersection of height $3$ has the expected codimension and degree as a subscheme of $\PP^5$.
\end{teo*}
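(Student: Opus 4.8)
The plan is to translate the problem into the geometry of the syzygy bundle and then combine a reduction to a single ``critical'' degree with a semicontinuity argument. Write $R=k[x_1,x_2,x_3]$, $A=R/(f_1,f_2,f_3)$, and let $\mathcal{E}$ be the rank-two syzygy bundle of $f_1,f_2,f_3$ on $\PP^2$, so that $\HH^1_*(\PP^2,\mathcal{E})\cong A$ and multiplication by a conic $C\in[R]_2=\HH^0(\PP^2,\mathcal{O}(2))$ is identified with the map $\HH^1(\mathcal{E}(i))\xrightarrow{\times C}\HH^1(\mathcal{E}(i+2))$ arising in the cohomology sequence of $0\to\mathcal{E}(i)\xrightarrow{\times C}\mathcal{E}(i+2)\to\mathcal{E}(i+2)|_C\to 0$. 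This is what makes $\C_A$ land inside the jumping conics of $\mathcal{E}$, and, more usefully, it shows that in the critical degree the failure of maximal rank is controlled by the failure of the restriction map $\HH^0(\mathcal{E}(i_0+2))\to\HH^0(\mathcal{E}(i_0+2)|_C)$ to have maximal rank.

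First I would pin down the relevant degrees. The Hilbert function of a height-three complete intersection is symmetric and unimodal about $\tfrac{e}{2}$, where $e=(d_1-1)+(d_2-1)+(d_3-1)$ is the socle degree, and, because the Gorenstein pairing on $A$ is $C$-linear, the map $\times C\colon[A]_i\to[A]_{i+2}$ is the transpose of $\times C\colon[A]_{e-i-2}\to[A]_{e-i}$; hence $\C_{A,i}=\C_{A,e-i-2}$ as subschemes of $\PP^5$, the map is expected injective below the middle and surjective above it, and the minimal expected codimension $\min_i\bigl(|h_{i+2}-h_i|+1\bigr)$ is realized at the one (or, by the transpose identity, essentially one) degree $i_0$ nearest $\tfrac{e-2}{2}$. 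The reduction step is then to show that for a general complete intersection the non-critical $\C_{A,i}$ are either empty or of strictly larger codimension, so that $\C_A$ agrees scheme-theoretically with $\C_{A,i_0}$. Emptiness for general $A$ is a closed condition on the parameter space (the incidence variety in $\PP^5\times(\text{CI's})$ is proper over the second factor), so it is enough to exhibit one complete intersection where the spurious degrees do not contribute; the SLP at range $2$ already proved in the paper guarantees that some $\ell^2$ lies outside every $\C_{A,i}$, so each of them is at least a proper subscheme to begin with.

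The heart of the argument is to prove that for a \emph{general} complete intersection the determinantal scheme $\C_{A,i_0}$, cut out by the maximal minors of the $h_{i_0+2}\times h_{i_0}$ matrix of linear forms on $\PP^5$ representing $\times C$, has the generic codimension $|h_{i_0+2}-h_{i_0}|+1$ and the corresponding Giambelli--Thom--Porteous degree $\binom{h_{i_0+2}}{h_{i_0}-1}$ (taking $h_{i_0}\le h_{i_0+2}$). Since a determinantal locus always has codimension \emph{at most} the expected one, and both codimension and degree are semicontinuous in flat families, it suffices to produce a single complete intersection realizing this generic behaviour and then invoke semicontinuity over the irreducible parameter space of complete intersections with the prescribed generator degrees. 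The paper's own examples show the monomial complete intersections are exactly the wrong test case, so the plan is to pick a carefully chosen mildly non-monomial complete intersection — equivalently, through the correspondence above, a rank-two bundle that is a small, explicit deformation of a split or monomial syzygy bundle — whose restriction-to-a-general-conic map can be analysed by hand, using Grauert--M\"ulich to control the generic splitting type of $\mathcal{E}$ on a conic together with a direct cohomology computation controlling the extra, genuinely ``non-jumping'' part of the degeneracy, and to verify on it that $\C_A$ has exactly the expected codimension and degree.

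The hard part will be precisely this last verification: the $h_{i_0+2}\times h_{i_0}$ matrix coming from multiplication by a conic inside a fixed algebra is extremely structured — far from a generic matrix of linear forms — so its maximal minors have no formal reason to behave generically, and, as the abstract warns, the inclusion $\C_A\subseteq\{\text{jumping conics of }\mathcal{E}\}$ can be strict exactly when $c_1(\mathcal{E})$ is even, so one cannot simply import the known genericity of the jumping-conic locus of a general bundle. I expect to need either a complete intersection general enough that the multiplication matrix can be put into a tractable normal form (making the codimension and the minors computable), or an incidence-variety dimension count over the space of complete intersections showing that the bad locus — where $\C_A$ has codimension below expected or degree above expected — is a proper closed subset, with the fibres estimated through the cohomological description of $\times C$ and the behaviour of $\mathcal{E}$ on a general conic. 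Controlling the non-jumping contribution to the degeneracy, which has no analogue in the lines case, is the genuinely new obstacle.
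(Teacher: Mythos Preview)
Your framework is right in outline---reduce to the middle degree, then use semicontinuity once you have one good example---but the proposal has a genuine gap at the step you yourself flag as ``the hard part'': you never actually produce the example, and the two routes you sketch (a tractable non-monomial complete intersection, or an incidence-variety count over the space of complete intersections) are left as wishes rather than arguments. The paper's \S7 shows monomial complete intersections can have $\cod\C_A=1,2,$ or $3$ when the expected value is $3$, so finding a complete intersection one can analyse by hand is not just a matter of avoiding monomials; and your incidence-variety plan would need exactly the control on ``the non-jumping contribution to the degeneracy'' that you identify as the new obstacle but do not address.

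The paper sidesteps this entirely by enlarging the family. Rather than stay inside complete intersections, it uses Diesel's result that height-$3$ Artinian Gorenstein algebras with a fixed Hilbert function form an irreducible family in which the complete intersections are dense. In that family one can choose a Gorenstein algebra $R/J$ that \emph{comes from points}: $J\supset I_Z$ for a reduced set of points $Z\subset\PP^2$ with the truncated Hilbert function, and one may take $Z$ with the Uniform Position Property (this is where the paper needs, and first checks, that the $g$-vector of a complete-intersection Hilbert function is of decreasing type). Since $[R/J]_j=[R/I_Z]_j$ through degree $\tfrac{e+1}{2}$, the middle-degree multiplication $\times C$ is injective iff $(I_Z:C)=I_Z$ in degree $\tfrac{e-1}{2}-1$; by UPP this fails precisely when $C$ passes through at least $h_{\frac{e+1}{2}}-h_{\frac{e-1}{2}-1}+1$ points of $Z$. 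The locus of such conics visibly has the expected codimension in $\PP^5$, and semicontinuity finishes. None of the restriction-to-$C$ cohomology, Grauert--M\"ulich on conics, or jumping-conic analysis is used for this step.

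A smaller point: your reduction to one degree is more complicated than needed. The paper's Theorem~\ref{'mid} shows directly, for any Gorenstein algebra with WLP, that $I(\C_A)=I(\C_{A,\lfloor e/2\rfloor-1})$ via containments of minors (Proposition~\ref{matrix}), so there is no need to argue that the other $\C_{A,i}$ are ``empty or of larger codimension for general $A$''.
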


This paper is organized as follows.
In \textbf{\S 2}, following what has been done for lines in \cite{main}, we define the \emph{non-Lefschetz locus for forms of degree $2$} for any Artinian algebra $A$ over $R=\kk[x_1,\dots, x_n]$ as a subscheme of $\PP^{\binom{n+1}{2}-1}$. We will prove that
\begin{prop*}[\ref{'mid}]
Let $A$ be a Gorenstein Artinian algebra of socle degree $e$. If $A$ has the WLP, then the non-Lefschetz locus of forms of degree $2$ is defined by the ideal in the middle degree. 
\end{prop*}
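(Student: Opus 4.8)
The plan is to mimic, for conics, the argument used for linear forms in \cite{main}: use Gorenstein duality to fold the non-Lefschetz locus onto the ``upper half'' of the degrees, and then a monotonicity statement among the cokernels of $\times C$ to collapse that half onto the single middle degree $i_0$. Concretely, I would work over $S=k[c_0,\dots,c_5]$ with the generic conic $C\in[A\otimes_k S]_2$, so that $\times C$ in degree $i$ is represented by the $h_{i+2}\times h_i$ matrix $M_i$ of linear forms whose maximal minors generate $I(\C_{A,i})$, and $I(\C_A)=\bigcap_i I(\C_{A,i})$. Since $i_0$ is one of the indices, the inclusion $I(\C_A)\subseteq I(\C_{A,i_0})$ is automatic, so everything reduces to showing $I(\C_{A,i_0})\subseteq I(\C_{A,i})$ for every $i$.

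\emph{Duality.} Because $A$ is Gorenstein of socle degree $e$, the perfect pairings $[A]_a\times[A]_{e-a}\to[A]_e\cong k$ (extended $S$-linearly) satisfy $\langle Ca,b\rangle=\langle a,Cb\rangle$; hence $\times C\colon[A]_{e-2-i}\to[A]_{e-i}$ is, in dual bases, the transpose of $\times C\colon[A]_i\to[A]_{i+2}$. Transposition and change of basis do not affect the ideal of minors of any fixed size, so $I(\C_{A,i})=I(\C_{A,e-2-i})$. It therefore suffices to prove $I(\C_{A,i_0})\subseteq I(\C_{A,j})$ for all $j\ge i_0$; for $j<i_0$ one has $e-2-j\ge i_0$, and the claim then follows from the case $j\ge i_0$.

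\emph{Cokernels and monotonicity.} Put $\bar A=(A\otimes S)/(C)$. Then $\cok\bigl(\times C\colon[A\otimes S]_i\to[A\otimes S]_{i+2}\bigr)=[\bar A]_{i+2}$, so $[\bar A]_{i+2}$ is presented by $M_i$ with $h_{i+2}$ generators and $\operatorname{Fitt}_0([\bar A]_{i+2})=I_{h_{i+2}}(M_i)$. Since $A$ has the WLP its Hilbert function is unimodal, so for $j\ge i_0$ one has $h_j\ge h_{j+2}$, whence $h_{j+2}=\min(h_j,h_{j+2})$ and $\operatorname{Fitt}_0([\bar A]_{j+2})=I(\C_{A,j})$. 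Now for any degree $d$ with $h_d\ge h_{d+1}$ the WLP yields a linear form $m$ with $\times m\colon[A]_d\to[A]_{d+1}$ surjective; by flat base change $\times m\colon[A\otimes S]_d\to[A\otimes S]_{d+1}$ is surjective, and passing to the quotient $\times m\colon[\bar A]_d\to[\bar A]_{d+1}$ is surjective as well. Thus $[\bar A]_{d+1}$ is a quotient of $[\bar A]_d$, and since Fitting ideals can only grow under surjections, $\operatorname{Fitt}_0([\bar A]_d)\subseteq\operatorname{Fitt}_0([\bar A]_{d+1})$. Applying this with $d=j+2$ for $j\ge i_0$ (where $j+2$ lies past the peak, so $h_{j+2}\ge h_{j+3}$) gives, via the identifications above, $I(\C_{A,j})\subseteq I(\C_{A,j+1})$. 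Chaining upward from $i_0$ yields $I(\C_{A,i_0})\subseteq I(\C_{A,j})$ for all $j\ge i_0$, and with the duality step this holds for every $j$, so $\bigcap_i I(\C_{A,i})=I(\C_{A,i_0})$.

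The delicate point — the only place where the scheme structure, and not merely the underlying set, is at stake — is the identification $\operatorname{Fitt}_0([\bar A]_{i+2})=I(\C_{A,i})$ valid in the surjective range: once the cokernels of $\times C$ are recognized as the graded pieces of the single algebra $\bar A$, the desired containments of ideals of maximal minors become containments of Fitting ideals along the surjections $[\bar A]_d\twoheadrightarrow[\bar A]_{d+1}$, where they are immediate; a direct manipulation of minors of the $M_i$ seems to go the wrong way. The remaining bookkeeping — pinning down $i_0$ (when $e$ is odd the two central degrees $\tfrac{e-1}{2},\tfrac{e-3}{2}$ have the same ideal by the duality step), and checking that $j+2$ is past the peak for $j\ge i_0$ — is routine.
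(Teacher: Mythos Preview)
Your argument is correct and takes a genuinely different route from the paper's. The paper first proves a separate monotonicity result (Proposition~\ref{matrix}): whenever $h_i\le h_{i+2}\le h_{i+4}$ and there is no socle in degrees $i,i+1$, one has $I(\C_{A,i+2})\subseteq I(\C_{A,i})$; the proof goes by quotienting $A$ in degree $i+4$ and then exhibiting $B_i$ as a submatrix of $B_{i+2}$ using a well-chosen conic. This gives chains in steps of two on the injective side; duality then reduces the theorem to a single crossing inclusion $I(\C_{A,\lfloor e/2\rfloor-1})\subseteq I(\C_{A,\lfloor e/2\rfloor-2})$, which is handled by a direct submatrix argument using a Lefschetz linear form $\ell$ and splitting into the cases $e$ even and $e$ odd. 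Your approach bypasses Proposition~\ref{matrix} entirely: you use duality first to reduce to the surjective side, then recognize the cokernels of $\times C$ as graded pieces of the single $S$-algebra $\bar A=(A\otimes S)/(C)$, so that the desired inclusions become $\operatorname{Fitt}_0([\bar A]_d)\subseteq\operatorname{Fitt}_0([\bar A]_{d+1})$, which follow at once from the surjections $\times m\colon[\bar A]_d\twoheadrightarrow[\bar A]_{d+1}$ supplied by a Lefschetz linear form. This is more conceptual and more uniform: it replaces the matrix bookkeeping and the even/odd case split by a single invocation of the functoriality of Fitting ideals. The paper's approach, on the other hand, isolates Proposition~\ref{matrix} as a result of independent interest (it does not require WLP, only the absence of socle), which is later reused in the module setting. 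One small point: your coordinate ring $S=k[c_0,\dots,c_5]$ is the $n=3$ case, but the statement and your argument are valid for arbitrary $n$ with $S=k[a_1,\dots,a_N]$, $N=\binom{n+1}{2}$.
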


In \textbf{\S 3}, we specialize to the case in  $3$ variables. In this setting, we refer to $\mathcal{C}_A$ with its structure as a subscheme of $\PP^5$ as the \emph{non-Lefschetz locus of conics} of $A$.
Our goal is to study the non-Lefschetz locus of conics for the first cohomology modules $\HH_*^1(\PP^2,\E)$ of rank $2$ vector bundles $\E$, as a generalization of the case of complete intersections  $R/(f_1,f_2,f_3)$. Therefore in this section, we provide some background on vector bundles over $\PP^2$.

In \textbf{\S 4}, we give the following result.
 \begin{cor*}[\ref{SLP2}]
	Any complete intersection $A=\kk[x_1,x_2,x_3]/(f_1,f_2,f_3)$ has the Strong Lefschetz Property at range $2$.
  \end{cor*}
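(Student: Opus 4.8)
The plan is to reduce the statement to the behaviour of $\times\ell^2$ in a single degree and then to treat that degree through the syzygy bundle of $A$. Since $A$ is a complete intersection it is Artinian Gorenstein of socle degree $e=\sum_{i=1}^3(d_i-1)$, and by the theorem of Harima--Migliore--Nagel--Watanabe \cite{13} it has the Weak Lefschetz Property, say via a general linear form $\ell$. Writing $\times\ell^2=(\times\ell)\circ(\times\ell)$ and using that the Hilbert function of a complete intersection is symmetric and unimodal, the WLP already forces $\times\ell^2\colon[A]_i\to[A]_{i+2}$ to have maximum rank in every degree; the only possible exception---and only when $e$ is even and the Hilbert function strictly increases at $e/2$---is the map $\times\ell^2\colon[A]_{(e-2)/2}\to[A]_{(e+2)/2}$, which must then be shown to be an isomorphism for a suitable $\ell$. (This reduction is also immediate from Proposition~\ref{'mid}: $A$ being Gorenstein with the WLP, the non-Lefschetz locus of conics is cut out in the middle degree, so it suffices to exhibit a conic of the form $\ell^2$ with maximum rank there.) In particular, whenever one $d_i$ exceeds the sum of the other two the Hilbert function is flat around its peak and the argument is already complete; so I may assume $d_1\le d_2\le d_3$ with $d_3<d_1+d_2$, i.e.\ that the syzygy bundle below is semistable.

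For the remaining case I would realise $A$ as a cohomology module. The syzygy bundle fits in $0\to\E\to\bigoplus_{i=1}^3\OO_{\PP^2}(-d_i)\xrightarrow{(f_1,f_2,f_3)}\OO_{\PP^2}\to 0$ with $\E$ locally free of rank $2$, $c_1(\E)=-\sum_i d_i$, and $\E$ semistable precisely because $d_3<d_1+d_2$; passing to cohomology yields a graded isomorphism $\HH^1_*(\PP^2,\E)\cong A$. Under this identification the middle multiplication $\times\ell^2$ is the map induced on $\HH^1$ by the inclusion of the first infinitesimal neighbourhood $2L=V(\ell^2)$ of the line $L=V(\ell)$, through $0\to\E(t-2)\to\E(t)\to\E(t)|_{2L}\to 0$; since $\HH^2_*(\E)$ vanishes in the range at issue (this follows from the Koszul resolution of $A$), the middle map is an isomorphism exactly when $\HH^1\bigl(\E(t)|_{2L}\bigr)=0$ in the critical degree $t=(e+2)/2$. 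Filtering $\E(t)|_{2L}$ by the conormal sequence $0\to\E|_L(t-1)\to\E(t)|_{2L}\to\E|_L(t)\to 0$, and using that $\E|_L$ is balanced for a general line $L$ by Grauert--M\"ulich, I would reduce this vanishing to the non-triviality of the extension class of $\E|_{2L}$ in $\Ext^1_L\bigl(\E|_L,\E|_L(-1)\bigr)$.

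The hard part will be precisely this: a Grauert--M\"ulich-type statement for first infinitesimal neighbourhoods, asserting that for a general line $L$ the class of $\E|_{2L}$ is non-zero. Equivalently, one must show that the $\bigl((e-2)/2\bigr)$-th Hessian of the Macaulay dual generator $F$ of $A$ does not vanish identically---its determinant is a form of degree $2\dim_k[A]_{(e-2)/2}$ in the coefficients of $\ell$, and its non-vanishing is exactly the statement that the ``bad'' linear forms constitute a proper closed subscheme of $\PP([R]_1)$. I would aim to prove this non-vanishing by a direct computation, using that $f_1,f_2,f_3$ is a regular sequence (so these forms have no common factor) together with the semistability of $\E$ obtained above. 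Granting it, a general $\ell$ makes $\times\ell^2$ an isomorphism in the middle degree, which by the first paragraph is all that is needed for the Strong Lefschetz Property at range $2$.
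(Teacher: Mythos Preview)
Your reduction to the single middle degree when $e$ is even and $d_3<d_1+d_2$ is correct and matches the paper's argument (which is really the proof of Proposition~\ref{LC}, Case~2). The realisation of $A$ as $\HH^1_*(\PP^2,\E)$, the use of $0\to\E(t-2)\to\E(t)\to\E(t)|_{2L}\to 0$, and the observation that the problem reduces to the vanishing of $\HH^0(\E|_{2L})$ (equivalently $\HH^1(\E|_{2L})$ after checking the two $\HH^1$'s in the middle have equal dimension) are all exactly what the paper does.

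The gap is in your third paragraph. You identify the hard step correctly---showing that for a general line $L$ the extension class of $\E|_{2L}$ is nonzero, equivalently that the relevant higher Hessian of the dual socle generator does not vanish identically---but you do not prove it. The phrase ``I would aim to prove this non-vanishing by a direct computation, using that $f_1,f_2,f_3$ is a regular sequence'' is not a proof, and it is not clear what computation you have in mind. Indeed, the non-vanishing of that Hessian for an \emph{arbitrary} height-three complete intersection is precisely the content of SLP at range~2 in the critical degree; absent a concrete mechanism, invoking the regular-sequence hypothesis here risks circularity. The paper closes this gap in one line by recognising that the condition $\HH^0(\PP^2,\E_{|\ell^2})\neq 0$ is exactly Hulek's definition of a \emph{jumping line of the second kind} (Definition~\ref{secondtype}), and then citing Hulek's theorem (Theorem~\ref{secondjump}) that such lines form a curve of degree $2(c_2(\E)-1)$ in $(\PP^2)^*$---in particular a proper closed subset, so a general $\ell$ avoids it. Your filtration by the conormal sequence and the reduction to the extension class are in fact the first steps of Hulek's own argument, but the genuinely hard input (that the locus of lines with trivial extension is a divisor and not all of $(\PP^2)^*$) is what you have left unproved.
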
 
   Moreover, the lines $\ell$ for which the multiplication map by $\ell^2$ does not have maximal rank form a hypersurface in the space of linear forms. 
 The same is true as well for any first cohomology module of a rank $2$ vector bundle $\HH_*^1(\PP^2,\E)$ over $\PP ^2$ (Proposition \ref{LC}).
As a consequence the non-Lefschetz locus of conics of $M=\HH_*^1(\PP^2,\E)$ has positive codimension in $\PP^5$.

In \textbf{\S 5}, we investigate the connection between non-Lefschetz conics and the jumping conics. The notion of \emph{jumping conics} was initially introduced by \cite{Vitter} for semistable vector bundles of rank $2$ over $\PP^2$.  We first extend the definition to include the case when $\E$ is unstable. It follows that whenever $\E$ is unstable or  $c_1(\E)$ is odd, the jumping conics are exactly the non-Lefschetz conics. In such cases, the non-Lefschetz locus of conics $\C_M$ is a hypersurface in $\PP^5$. This does not hold when $\E$ is semistable with $c_1(\E)$ even: while every non-Lefschetz conic is indeed a jumping conic, the reverse is not necessarily true.
\begin{cor*}[\ref{jum3}]
	Let $\E$ be a semistable, normalized vector bundle with $c_1(\E)=0$.\\
 A smooth conic $C$ is a non-Lefschetz conic if and only if it is a jumping conic such that $\E_{| C}\cong \OO_{\PP^1}(a)\oplus\OO_{\PP^1}(-a)$ with $a>1$.
\end{cor*}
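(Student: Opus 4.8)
The plan is to convert $\times C$ on $M=\HH_*^1(\PP^2,\E)$ into cohomology of the restriction $\E_{|C}$ and then read off exactly when maximal rank fails; throughout, $h^i(\mathcal{F})$ denotes $\dim\HH^i(\PP^2,\mathcal{F})$. Writing $C$ also for a defining quadratic form of the conic, one has $\mathcal{I}_C\cong\OO_{\PP^2}(-2)$ and hence
\[
0\to\E(-2)\xrightarrow{\,\times C\,}\E\to\E_{|C}\to 0 .
\]
Twisting by $\OO_{\PP^2}(m)$ and taking cohomology, the segment $\HH^0(\E(m))\to\HH^0(\E_{|C}(m))\to[M]_{m-2}\xrightarrow{\times C}[M]_m$ identifies $\ker\big(\times C\colon[M]_{m-2}\to[M]_m\big)$ with $\cok\big(\HH^0(\E(m))\to\HH^0(\E_{|C}(m))\big)$; since the kernel of the restriction $\HH^0(\E(m))\to\HH^0(\E_{|C}(m))$ is $\HH^0(\E(m-2))$, I obtain
\[
\dim\ker\big(\times C\colon[M]_{m-2}\to[M]_m\big)=h^0(\E_{|C}(m))-h^0(\E(m))+h^0(\E(m-2))=:f_C(m).
\]
Setting $r(m):=\max\{0,\dim[M]_{m-2}-\dim[M]_m\}$, the map $\times C\colon[M]_{m-2}\to[M]_m$ has maximal rank if and only if $f_C(m)=r(m)$, while $f_C(m)\ge r(m)$ always; thus $C$ is a non-Lefschetz conic precisely when $f_C(m)>r(m)$ for some $m$.

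Next I would compute $f_C$ for $C$ smooth. Then $C\cong\PP^1$ in its degree-$2$ embedding and, since $c_1(\E)=0$, the bundle $\E_{|C}$ has degree $0$, so $\E_{|C}\cong\OO_{\PP^1}(a)\oplus\OO_{\PP^1}(-a)$ for a unique $a=a_C\ge0$; by definition $C$ is a jumping conic exactly when $a_C\ge1$. Because $\OO_{\PP^2}(1)|_C=\OO_{\PP^1}(2)$, we have $h^0(\E_{|C}(m))=h^0(\OO_{\PP^1}(2m+a))+h^0(\OO_{\PP^1}(2m-a))$, and therefore
\[
f_C(m)=f_0(m)+\varepsilon_a(m),\qquad \varepsilon_a(m):=h^0(\OO_{\PP^1}(2m+a))+h^0(\OO_{\PP^1}(2m-a))-2\,h^0(\OO_{\PP^1}(2m)),
\]
where $f_0(m):=2\,h^0(\OO_{\PP^1}(2m))-h^0(\E(m))+h^0(\E(m-2))$ depends only on $\E$ and $m$, and $\varepsilon_a(m)$ only on $a$ and $m$. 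Convexity of $d\mapsto h^0(\OO_{\PP^1}(d))=\max\{0,d+1\}$ gives $\varepsilon_a(m)\ge0$; an elementary check shows $\varepsilon_a\equiv0$ when $a\le1$, whereas $\varepsilon_a(0)=a-1>0$ when $a\ge2$.

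To finish I must pin down $f_0$. By Proposition~\ref{LC}, $M$ has the Strong Lefschetz Property at range $2$, so the set of conics $C$ with $\times C$ of maximal rank in every degree is a nonempty Zariski-open subset of $\PP^5$; a general such conic is smooth with balanced restriction $\E_{|C}\cong\OO_{\PP^1}\oplus\OO_{\PP^1}$, this being the generic splitting type of a semistable rank-$2$ bundle on conics (cf.~\cite{Vitter} and \S5). For such a conic $\varepsilon_0\equiv0$, hence $f_0(m)=f_C(m)=r(m)$ for all $m$. Substituting back, for an arbitrary smooth conic $C$ with splitting type $a=a_C$ one gets $f_C(m)=r(m)+\varepsilon_a(m)$, so $\times C\colon[M]_{m-2}\to[M]_m$ has maximal rank iff $\varepsilon_a(m)=0$; therefore $C$ is a non-Lefschetz conic iff $\varepsilon_a(m)>0$ for some $m$, iff $a\ge2$. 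Since $a\ge2$ forces $a\ge1$, this is exactly the assertion that $C$ is a non-Lefschetz conic if and only if it is a jumping conic with $\E_{|C}\cong\OO_{\PP^1}(a)\oplus\OO_{\PP^1}(-a)$ and $a>1$.

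I expect the step $f_0=r$ to be the main obstacle: the long exact sequence of the conic together with $\PP^1$-cohomology is routine, but concluding $f_0(m)=\max\{0,\dim[M]_{m-2}-\dim[M]_m\}$ genuinely uses the global input of the Strong Lefschetz Property at range $2$ (so that \emph{some} conic is Lefschetz) combined with the fact that a general smooth conic has balanced restriction (so that a Lefschetz conic with $\varepsilon\equiv0$ exists). A subsidiary point to treat carefully is the combinatorial statement $\varepsilon_a\equiv0\iff a\le1$, as this is precisely what separates the $a=1$ jumping conics from the genuine non-Lefschetz conics.
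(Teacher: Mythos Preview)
Your proof is correct and takes a genuinely different route from the paper's. The paper argues each direction directly: for $a\le 1$ it verifies $h^0(\E(i)_{|C})=0$ for $i<0$ and $h^1(\E(i)_{|C})=0$ for $i\ge0$, giving injectivity/surjectivity immediately; for $a>1$ it shows $\times C\colon\HH^1(\E(-2))\to\HH^1(\E)$ fails maximal rank by invoking $h^0(\E)=0$ (stable case) or $h^0(\E)=1$ (semistable-not-stable case), so a case split on stability is needed. Your approach instead writes $f_C(m)=f_0(m)+\varepsilon_a(m)$, with $f_0$ independent of $C$ and $\varepsilon_a$ purely combinatorial, and then calibrates $f_0=r$ once and for all by choosing a single smooth conic that is simultaneously Lefschetz (nonempty open by Proposition~\ref{LC}) and has balanced restriction $a=0$ (nonempty open by Vitter's theorem). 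This buys you a uniform argument with no stable/semistable split; the paper's version is more self-contained in that the $a\le1$ direction does not appeal to Proposition~\ref{LC} or Vitter. Your identification of the crux is accurate: the step $f_0=r$ is exactly where the global input enters, and the statement $\varepsilon_a\equiv0\iff a\le1$ is the combinatorial hinge that isolates the $a=1$ jumping conics.
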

In \textbf{\S 5.1}, we compute the expected codimension of $\C_M$. For a semistable $\E$ with even first Chern class, the non-Lefschetz locus of conics is expected to have codimension $2$ if $\E$ is semistable but not stable, and $3$ if $\E$ is stable. We conjecture that such dimension is achieved for $\E$ general. 

In \textbf{\S 6}, we resolve the question about the codimension of the non-Lefschetz locus of conics for a general complete intersection, proving the conjecture for this particular case.
\begin{teo*}[\ref{main CI}]
	Let $A=R/(f_1,f_2,f_3)$ be a general complete intersection of type $(d_1,d_2,d_3)$, and socle degree $e$. The non-Lefschetz locus of conics has the expected codimension in $\PP^5$:
	$$\cod \mathcal{C}_{A}=\begin{cases} 1& \text{ if } e \text{ is even or } d_3\geq d_1+d_2+2;\\
		2& \text{ if } d_3=d_1+d_2\\
		3& \text{ if } e \text{ odd and }d_3\leq d_1+d_2-2.\\
	\end{cases}$$
\end{teo*}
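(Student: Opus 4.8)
The plan is to reduce the statement to a codimension estimate for the determinantal scheme cut out by the multiplication matrix in the middle degree, and then to split the analysis according to the parity of $e$.

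Since $A$ is Artinian Gorenstein of height $3$ with $\ch k=0$, it has the WLP by \cite{13}, so Proposition \ref{'mid} applies: $\C_A$ is defined by the ideal of maximal minors of the $h_{c+2}\times h_c$ matrix $N=N(C)$ of the map $\times C\colon [A]_c\to[A]_{c+2}$ in the middle degree $c$ (namely $c=(e-2)/2$ when $e$ is even, and $c=(e-3)/2$ when $e$ is odd, the two symmetric middle degrees giving the same ideal by Gorenstein duality). After fixing bases, the entries of $N$ are linear forms in the six coordinates of $\PP^5$. By unimodality and symmetry of the Hilbert function $h_{c+2}\ge h_c$, and a direct computation with $\mathrm{HS}_A(t)=\prod_{j=1}^3(1+t+\cdots+t^{d_j-1})$ yields
\[
h_{c+2}-h_c=
\begin{cases}
0 & e\text{ even, or }e\text{ odd and }d_3\ge d_1+d_2+2,\\
1 & d_3=d_1+d_2,\\
2 & e\text{ odd and }d_3\le d_1+d_2-2.
\end{cases}
\]
Because the expected codimension of the locus where an $h_{c+2}\times h_c$ matrix drops rank below $h_c$ is $h_{c+2}-h_c+1$, and (once that codimension is attained) the ideal of maximal minors is perfect of Eagon--Northcott type, hence Cohen--Macaulay of the expected degree $\binom{h_{c+2}}{h_c-1}$ by the Giambelli--Thom--Porteous formula, the three values $1,2,3$ in the statement are exactly these expected numbers. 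It remains to show that a general complete intersection of the prescribed type attains them.

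The bound $\cod\C_A\le h_{c+2}-h_c+1$ holds for every complete intersection of the type, by the standard height bound for ideals of maximal minors, as soon as $\C_A\ne\PP^5$; and $\C_A\ne\PP^5$ is guaranteed by Corollary \ref{SLP2}. In the codimension-$1$ cases ($h_{c+2}=h_c$) this already finishes the proof without any genericity: $N$ is square, $\det N$ is a nonzero form of positive degree $h_c$ by Corollary \ref{SLP2}, and $\C_A=V(\det N)$ is a hypersurface of degree $h_c=\binom{h_c}{h_c-1}$. (Equivalently, in these cases the first Chern class of the syzygy bundle of $A$ is odd, and one may invoke the identification of $\S 5$ of non-Lefschetz conics with jumping conics.) Thus the real content is the lower bound $\cod\C_A\ge h_{c+2}-h_c+1$ in the two cases with $e$ odd, where the syzygy bundle has even first Chern class.

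For this I would argue by semicontinuity: the complete intersections of type $(d_1,d_2,d_3)$ form an irreducible family over which the matrices $N$ vary algebraically, so the parameter locus with $\dim\C_A\ge D$ is closed, and it suffices to exhibit a \emph{single} complete intersection of the given type whose $\C_A$ has codimension $\ge h_{c+2}-h_c+1$ (then equal, by the upper bound, with the stated degree). The witness cannot be monomial --- the examples of $\S 6$ exhibit monomial complete intersections whose $\C_A$ is strictly larger than expected --- so it must be constructed. I would proceed in two steps: (i) for a finite set of ``seed'' degree types in each of the two cases, check by explicit computation (Macaulay2) that a suitably general complete intersection has $\C_A$ of the expected codimension $2$ or $3$; (ii) propagate from the seeds to all larger degree types by an inductive step --- deforming one generator (e.g. replacing $f_3$ by $\ell f_3$ for a general linear form $\ell$, or otherwise bumping a degree) while tracking the Hilbert function and the rank stratification of the middle-degree matrix $N$, so that the expected-codimension property is preserved. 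An alternative to (ii) would be to realise the syzygy bundle of a general complete intersection of the given type as a general member of the relevant family of (semi)stable rank-$2$ bundles on $\PP^2$ and to read off the codimension of the jumping/non-Lefschetz conic locus from the incidence-variety dimension count of $\S 5.1$.

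The main obstacle is precisely step (ii) (and the explicit seed in step (i)) for $e$ odd: one needs a construction of a complete intersection with the expected determinantal behaviour that is uniform across all generator degrees, and the failure of monomial complete intersections shows this is genuinely delicate. Controlling how the rank stratification of $N$ varies under the chosen deformation --- in particular ruling out any unexpected extra drop in rank that would enlarge $\C_A$ --- is where the bulk of the work lies.
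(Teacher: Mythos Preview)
Your setup is correct: the reduction to the middle degree via Proposition~\ref{'mid}, the computation of $h_{c+2}-h_c$, the upper bound on codimension, and the codimension-$1$ cases are all fine and match the paper. The semicontinuity reduction to a single witness is also the right move.

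The genuine gap is the construction of the witness in the two $e$-odd cases. Your steps (i)--(ii) are not a proof: the ``seeds plus deformation'' scheme is only a sketch, and you yourself flag that controlling the rank stratification under $f_3\mapsto \ell f_3$ is exactly the hard part you have not done. The alternative of passing to a general bundle in the moduli space does not help either, because the relevant case is the \emph{stable} syzygy bundle with even first Chern class, and \S 5.1 only computes the \emph{expected} codimension there; the actual codimension is left as Conjecture~\ref{conj}, so this would be circular.

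The paper avoids looking for a complete-intersection witness altogether. The key idea you are missing is to enlarge the family: by \cite{Diesel} the height-$3$ Gorenstein algebras with the given Hilbert function form an irreducible flat family, and the complete intersections are Zariski dense in it. Hence it suffices to exhibit \emph{any} Gorenstein algebra $R/J$ with the correct Hilbert function whose $\C_{R/J}$ has the expected codimension. One first checks that the $g$-vector of a complete intersection with $d_3\le d_1+d_2$ is of decreasing type (this is a short argument with \cite{RZ}). Then one takes $R/J$ \emph{coming from points}: $J\supset I_Z$ for a reduced set of points $Z\subset\PP^2$ with the Uniform Position Property (available by \cite{MR} precisely because the $g$-vector is of decreasing type), so that $[R/J]_j=[R/I_Z]_j$ for $j\le\tfrac{e+1}{2}$. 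For such $R/J$, a conic $C$ fails injectivity of $\times C\colon[R/I_Z]_{(e-3)/2}\to[R/I_Z]_{(e+1)/2}$ if and only if $[I_{Z:C}/I_Z]_{(e-3)/2}\neq 0$, and by UPP this happens exactly when $C$ passes through at least $h_{(e-1)/2}-h_{(e-3)/2}+1$ points of $Z$, i.e.\ at least $2$ points when $d_3=d_1+d_2$ and at least $3$ points when $d_3\le d_1+d_2-2$. The locus of conics through $2$ (resp.\ $3$) fixed points has codimension $2$ (resp.\ $3$) in $\PP^5$, giving $\cod\C_{R/J}=2$ (resp.\ $3$). Semicontinuity over the Diesel family then yields the result for the general complete intersection.

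So the missing ingredient is not a deformation argument among complete intersections, but the passage to the larger Gorenstein family together with the ``comes from points with UPP'' construction, which converts the determinantal question into a transparent incidence condition on conics through points.
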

The most interesting case is when the socle degree $e$ is odd and $d_3\leq d_1+d_2$. For this case, in \textbf{\S 6.1} we construct an explicit Gorenstein algebra $R/J$  with the desired Hilbert function for which the non-Lefschetz locus of conics has expected codimension and then invoke semicontinuity.
 In \textbf{\S 6.2}, for the case $d_3< d_1+d_2-2$,  we also prove that the set of conics in $\C_A$ that do not vanish at any of the points of the zero-dimensional scheme defined by the ideal $(f_1,f_2)$, has codimension  $3$ in $\PP^5$.

The hypothesis of generality is necessary whenever the socle degree $e$ is odd and $d_3\leq d_1+d_2$. In fact, in \textbf{\S 7}  we construct examples of monomial complete intersection $A=R/(x_1^{d_1},x_2^{d_2},x_3^{d_3})$ 
with $d_3\leq d_1+d_2-2$, where $\C_A$ has codimension 1, 2, or 3.
Moreover, we show that for every monomial complete intersection with $d_3=d_1+d_2$, $\C_A$ is always a hypersurface even if, in this case, the expected codimension is $2$.

In \textbf{\S 8}, we study the non-Lefschetz locus of conics for general height $3$ Gorenstein algebras with fixed Hilbert function. As in the case of lines studied in \cite{main}, to have the expected codimension we need an extra condition on the \emph{g-vector} $(1,2,g_2,\dots,g_{\lfloor \frac{e}{2}\rfloor})$, defined as the positive part of the first difference of the Hilbert function.
\begin{prop*}[\ref{genGo}]
	Let $(1,3,h_2,\dots,h_{e})$ be an SI-sequence such that its g-vector is of decreasing type. 
	Then for a general Gorenstein algebra with Hilbert function $(1,3,h_2,\dots,h_{e})$ the non-Lefschetz locus of conics has expected codimension in $\PP^5$.
 \end{prop*}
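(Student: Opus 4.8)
The plan is to reduce the statement about general height $3$ Gorenstein algebras with a fixed SI-sequence to the complete intersection case (Theorem~\ref{main CI}) or to a direct construction, following the strategy already used in \S 6. First I would recall from \cite{main} that a height $3$ Gorenstein algebra with g-vector of decreasing type carries the WLP; by Proposition~\ref{'mid}, the non-Lefschetz locus of conics is then cut out by the maximal minors of the single matrix representing $\times C$ in the middle degree (or the two middle degrees, if $e$ is odd, where the relevant map is $[A]_{(e-1)/2}\to[A]_{(e+3)/2}$, and symmetry reduces us to one of them). So the whole problem becomes: for general $C$ of degree $2$, what is the rank of one explicit generically-determined matrix of linear forms, and what is the codimension of the locus where that rank drops? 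The expected codimension is the codimension of a generic determinantal variety of the appropriate size, and we want to show it is attained.

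The key steps, in order, are: (1) Compute the Hilbert function values $h_i, h_{i+2}$ in the relevant degree and write down the expected codimension $\exc$ of $\C_A$ as $|h_{i+2}-h_i|+1$ (taking the minimum over the finitely many bad degrees, but by WLP and Proposition~\ref{'mid} only the middle one matters). (2) Show the codimension is at most this number — this is the ``upper bound'' direction and it is essentially the content of \S 4 plus Proposition~\ref{'mid}: the locus is a determinantal scheme defined by minors of a matrix of linear forms, hence its codimension is bounded above by the generic determinantal codimension. (3) Show the codimension is at least the expected value, i.e. equality, by exhibiting a single Gorenstein algebra with the prescribed Hilbert function for which $\C_A$ has exactly the expected codimension, and then invoke upper-semicontinuity of the fiber dimension / lower-semicontinuity of the rank over the parameter space of Gorenstein algebras with that Hilbert function. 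For the explicit example I would lean on the construction in \S 6.1 (an explicit $R/J$ with the desired Hilbert function) and on the fact that decreasing-type g-vectors are exactly the ones arising as g-vectors of complete intersections ``plus'' extra structure — more precisely, any SI-sequence with decreasing-type g-vector is the Hilbert function of some complete intersection-like Gorenstein algebra built by linking, so the complete-intersection computation of Theorem~\ref{main CI} transfers. (4) Verify that the parameter space of Gorenstein algebras with the fixed Hilbert function is irreducible (this is known in codimension $3$ by Diesel's theorem), so that a single good point forces the general point to be good.

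The main obstacle I expect is step (3): producing one Gorenstein algebra with the prescribed SI-sequence whose non-Lefschetz locus of conics genuinely has the expected — and not smaller — codimension. The construction in \S 6.1 handles the complete intersection Hilbert functions, but a general decreasing-type g-vector need not be the g-vector of a complete intersection, so I would need to either (a) build the example by a basic double link or a sum of a complete intersection ideal with a suitably generic socle-supporting piece, checking the Hilbert function is preserved and the middle multiplication matrix stays generic enough, or (b) argue abstractly that the determinantal locus of the middle-degree matrix, for the generic Gorenstein algebra, meets the expected codimension because the entries of that matrix are ``as general as the Hilbert function allows'' — which is where the decreasing-type hypothesis on the g-vector enters, exactly as the analogous hypothesis entered the line case in \cite{main}. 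I would model the argument closely on \cite[\S 5]{main}, replacing their $h_i\times h_{i+1}$ matrix of linear forms with our $h_{i+2}\times h_i$ matrix whose entries are the coefficients of $\times C$ for a generic conic $C$, and check that the combinatorial inequalities controlling when the minors behave generically go through with ``$+1$ degree'' shifted to ``$+2$''. A secondary, more technical point is making sure that when $e$ is odd the relevant map is between the two ``almost-middle'' degrees and that Proposition~\ref{'mid} still pins the locus down to this single matrix; this should follow from Gorenstein duality together with the WLP, but it needs to be stated carefully.
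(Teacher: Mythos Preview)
Your overall architecture matches the paper's: reduce to the middle degree via Proposition~\ref{'mid}, exhibit one Gorenstein algebra with the prescribed Hilbert function whose non-Lefschetz locus of conics has the expected codimension, and conclude by semicontinuity over the irreducible (Diesel) parameter space. Where you diverge is precisely at the step you flag as the main obstacle, and there the proposal has a genuine gap.

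You treat the construction of \S 6.1 as specific to complete-intersection Hilbert functions and then search for a replacement (basic double links, or an abstract genericity argument modeled on \cite[\S 5]{main}). Neither is needed, and neither is carried far enough to count as a proof. The construction in \S 6.1 already works verbatim for \emph{any} SI-sequence with g-vector of decreasing type: one takes a Gorenstein algebra $R/J$ that \emph{comes from points} $Z$ (such algebras exist for every SI-sequence by \cite{BOIJ}), and the decreasing-type hypothesis is invoked exactly once, via \cite{MR}, to arrange that $Z$ has the Uniform Position Property. With UPP in hand, a conic $C$ is non-Lefschetz for $R/J$ if and only if it passes through at least $h_{\lfloor e/2\rfloor+1}-h_{\lfloor e/2\rfloor-1}+1$ points of $Z$, and since a conic can meet at most five points of a UPP set, the codimension is $\min\{h_{\lfloor e/2\rfloor+1}-h_{\lfloor e/2\rfloor-1}+1,\,6\}$. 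That is the whole argument; no reduction to complete intersections or linkage is involved. The role of the decreasing-type hypothesis is thus concrete and geometric (it supplies UPP), not the vaguer ``entries are as general as the Hilbert function allows'' that you suggest.

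Two smaller points. First, for $e$ odd the middle map is $[A]_{(e-3)/2}\to[A]_{(e+1)/2}$ (i.e.\ degree $\lfloor e/2\rfloor-1$ to $\lfloor e/2\rfloor+1$), not $[A]_{(e-1)/2}\to[A]_{(e+3)/2}$. Second, the even-$e$ case is immediate from symmetry ($h_{e/2-1}=h_{e/2+1}$) together with the SLP for a general height~$3$ Gorenstein algebra, so the expected codimension is $1$ and is automatically achieved; this should be separated out before the points construction.
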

 Without the condition on the first difference, the expected codimension might not be achieved. Given an SI-sequence $(1,3,\dots,h_{e})$ with g-vector not of decreasing type, the non-Lefschetz locus of conics of a general Gorenstein algebra with Hilbert function $(1,3,\dots,h_{e})$, always has codimension 1.

\section{The non-Lefschetz locus of forms of degree $2$}

In this section, we want to define the non-Lefschetz locus for forms of degree $2$ for any Artinian algebra, following what has been done for linear forms in \cite{main}.
We first recall the definitions of Lefschetz Properties and non-Lefschetz locus.  

Let $\kk$ be an algebraically closed field of characteristic $0$, and let $R=\kk[x_1,\dots,x_n]$  be the standard graded polynomial ring in $n$ variables.
Let $A=R/I=\bigoplus_{i=0}^{e} [A]_i$ be a graded Artinian algebra of socle degree $e$. We denote with $(h_0,\dots, h_e)$ the Hilbert function of $A$, i.e. $h_i=\dim ([A]_i)$ for each $0\leq i\leq e$. 
\begin{df}
$A$ has the \emph{Weak Lefschetz Property (WLP)} if there exists a linear element $\ell$ in $R$ such that the multiplication map $\times\ell: [A]_i\to [A]_{i+1}$ has maximal rank for every  degree $i$, i.e. it is always either surjective or injective. Such an element is called a \emph{(Weak) Lefschetz element}.
\end{df}
The set of Lefschetz elements forms a Zariski-open set, and its complement is called the \emph{non-Lefschetz locus} of $A$, which is denoted by $\LL_A$. 
\begin{df}
	We say that $A$ has the \emph{Strong Lefschetz Property (SLP)} if there is a linear element $\ell$ such that the multiplication map for any power of $\ell$, $\times\ell^d:[A]_i\to [A]_{i+d}$, has maximal rank in each degree $i$.
\end{df}

The SLP also implies the Maximal Rank Property:
\begin{df}
$A$ has the \emph{Maximal Rank Property (MRP)} if for each $d$ the multiplication map for any general form of degree $d$, $f\in[R]_d$, has maximal rank in each degree.
\end{df}
If we fix $d$ we also have
\begin{df}
	We say that $A$ has the \emph{Strong Lefschetz Property (SLP) at range $d$} if there is a linear element $\ell$ such that the multiplication map $\times\ell^d:[A]_i\to [A]_{i+d}$, has maximal rank in each degree $i$.
\end{df}

In this paper, we want to focus only on the case $d=2$, and study the forms $C\in [R]_2$ such that there exists a degree $i$ for which the multiplication map $\times C: [A]_i\to [A]_{i+2}$ is neither surjective nor injective.

	\begin{df}\label{C}
		For the Artinian algebra $A$ of socle degree $e$, we define the \emph{non-Lefschetz locus of forms of degree 2} as
		$$\C_A=\bigcup_{i=0}^{e-2} \C_{A,i}$$ where 
            $$\C_{A,i}=\{C\in [R]_2: \ \times C: [A]_i\to [A]_{i+2} \text{ does not have maximum rank } \}.$$
	\end{df}

We want to study the non-Lefschetz locus of forms of degree $2$ as a subscheme of $\PP^{N-1}$, for $N=\binom{n+1}{2}$, in a similar way to what has been done for the non-Lefschetz locus $\LL_A$ in \cite{main}. 

For each $i$  we have a map 
\begin{align*}
	\phi :[R]_2 &\to \hom_k([A]_i,[A]_{i+2})\\
	C&\mapsto \times C: [A]_i \to [A]_{i+2}.
\end{align*}
Given a choice of basis for $[A]_i$ and $[A]_{i+2}$ as $\kk$-vector spaces, $\phi(C)\in\hom_k([A]_i,[A]_{i+2})$ is represented by a $h_{i+2}\times h_{i}$ matrix whose entries linearly depend on the coefficients of $C$. Then $C\in\C_{A,i}$ if and only if the $\rk \phi(C) < \min\{h_i,h_{i+2}\}$, i.e. if one of the maximal minors of $\phi(C)$ is zero, and this does not depend on the choice of the basis.
 
Following this idea, we define $S=\kk[a_1,\dots,a_N]$ to be the coordinate ring associated with $\PP^{N-1}$. We can think of the variables $a_1,\dots,a_N$ as the coefficients of a conic $$C=a_1x_1^2+a_2x_1x_2+\dots+a_Nx_n^2.$$

The multiplication map $\times C$ on $S\otimes_k A$ gives us  a map 
$$\times C : S\otimes_k [A]_i \to S\otimes_k [A]_{i+2}$$
that, with chosen bases for $[A]_i$ and $[A]_{i+2}$, can be represented by a $h_{i+2}\times h_{i}$ matrix of linear forms in $S$. We call this matrix  $B_i$.
  
\begin{df}
	$\C_{A,i}\subseteq \PP^{N-1}$ is scheme-theoretically defined by the ideal $I(\C_{A,i})$ of maximal minors of the matrix $B_i$. The \emph{non-Lefschetz locus of forms of degree 2}, $\C_A$, is defined as a subscheme of $\PP^{N-1}$ by the homogeneous ideal $I(\C_A)=\bigcap_{0\leq i\leq e-2}I(\C_{A,i})$.
\end{df}
\begin{rmk}
    This definition can be applied to forms of higher degrees as well; in this paper, we analyze just the case of degree $2$.
\end{rmk}
The expected codimension  of $\C_{A,i}$ is $|h_{i+2}-h_i|+1$ and if this codimension is achieved then $\deg \C_{A,i}=\binom{h_{i+2}}{h_i-1}$  by \cite{MIGLIORE}. 

The non-Lefschetz locus $\C_A$ of forms of degree-two forms is defined a priori as a union of determinantal schemes. However, to get a clearer understanding (and in particular to facilitate the computation of the expected codimension) we need to understand which condition we need in order to get a containment between the ideals $I(\C_{A,i})$ and determine that $\C_A$ is ``concentrated'' in one degree.

Similarly to what is done for lines in \cite{main}, we have the following proposition.
\begin{prop} \label{matrix}
		If $h_i\leq h_{i+2}\leq h_{i+4}$ and there is no socle in degree $i$ and $i+1$, then $I(\C_{A,i+2})\subseteq I(\C_{A,i})$.
\end{prop}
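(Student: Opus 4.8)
The plan is to mimic the argument for linear forms in \cite{main}, adapting it to the multiplication-by-$C$ setting. Fix a generic conic $C \in [R]_2$ and work degree by degree. The key observation is that if $\times C : [A]_i \to [A]_{i+2}$ has maximal rank (in particular is surjective, which is the relevant case when $h_i \le h_{i+2}$), then one can deduce surjectivity of $\times C : [A]_{i+2} \to [A]_{i+4}$, and this deduction can be made uniform over the parameter space $S = k[a_1,\dots,a_N]$. Concretely, I would pass to the quotient $\bar A = A/(C)$, or rather work with the complex
\[
[A]_i \xrightarrow{\;\times C\;} [A]_{i+2} \xrightarrow{\;\times C\;} [A]_{i+4},
\]
and show that the cokernel of the first map surjects onto the cokernel of the second, so that surjectivity in degree $i$ forces surjectivity in degree $i+2$. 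This is where the two hypotheses enter: $h_i \le h_{i+2} \le h_{i+4}$ guarantees that "maximal rank" means "surjective" in both the degree-$i$ and degree-$(i+2)$ maps, and the "no socle in degrees $i$ and $i+1$" condition is what is needed to control the relationship between the cokernels — it rules out elements of $A$ in those degrees that die under multiplication by every linear form, which would otherwise obstruct the propagation.

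The heart of the matter is to phrase this scheme-theoretically rather than just pointwise, so that it yields an actual containment of ideals $I(\C_{A,i+2}) \subseteq I(\C_{A,i})$ and not merely a set-theoretic containment of the loci. For this I would consider the matrices $B_i$ and $B_{i+2}$ of linear forms over $S$ and exhibit $B_{i+2}$ (up to the relevant choices of bases over $S \otimes_k A$) as related to $B_i$ by a factorization: the claim will be that the maximal minors of $B_{i+2}$ lie in the ideal generated by the maximal minors of $B_i$. The mechanism is that $\times C$ on $S \otimes_k A$ in degree $i+2$ factors, modulo the image of $\times C$ in degree $i$, through a map governed by the structure of $A$; the "no socle" hypothesis ensures the relevant multiplication map $[A]_1 \otimes [A]_{i+1} \to [A]_{i+2}$ (or its analog) is surjective, which is exactly what lets one lift generators and conclude the minor containment. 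I expect the cleanest route is to show directly that if all maximal minors of $B_i$ vanish at a point (equivalently, the specialized $B_i$ drops rank, i.e. fails surjectivity since $h_i \le h_{i+2}$), then a dimension count using $h_{i+2} \le h_{i+4}$ and surjectivity of the structure multiplication forces the specialized $B_{i+2}$ to drop rank too — and then upgrade this to the ideal statement via flatness/genericity of the coefficients $a_1,\dots,a_N$, exactly as in the corresponding lemma of \cite{main}.

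The main obstacle will be the bookkeeping in the scheme-theoretic upgrade: getting an honest inclusion of ideals (with the correct determinantal structure) rather than just an inclusion of the underlying sets. Pointwise the rank-drop propagation is a short linear-algebra argument, but to conclude $I(\C_{A,i+2}) \subseteq I(\C_{A,i})$ one must produce the factorization of the minors over $S$, and this requires choosing compatible bases for $S \otimes_k [A]_i$, $S \otimes_k [A]_{i+2}$, $S \otimes_k [A]_{i+4}$ and tracking how the "no socle" surjectivity gives a splitting at the level of $S$-modules (which is where one uses that these are free $S$-modules, so surjections over $k$ in each degree extend to surjections over $S$). Once that splitting is in place, the minor containment is formal. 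I would structure the proof as: (1) reduce "maximal rank" to "surjective" using $h_i \le h_{i+2} \le h_{i+4}$; (2) pointwise propagation of surjectivity using the no-socle hypothesis; (3) spread out over $S$ using freeness; (4) translate into the minor-ideal containment.
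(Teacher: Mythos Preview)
There is a genuine error in your setup: you have the direction of ``maximal rank'' reversed. When $h_i \le h_{i+2}$, the map $\times C : [A]_i \to [A]_{i+2}$ goes from a space of dimension $h_i$ to one of dimension $h_{i+2} \ge h_i$, so maximal rank means \emph{injective}, not surjective. The matrix $B_i$ is $h_{i+2} \times h_i$ (tall), and its maximal minors vanish exactly when injectivity fails. Consequently your cokernel-propagation argument, even if it went through, would yield the wrong containment: ``surjectivity in degree $i$ forces surjectivity in degree $i+2$'' says that non-failure propagates forward, i.e.\ $\C_{A,i+2} \subseteq \C_{A,i}$ set-theoretically, hence $\sqrt{I(\C_{A,i})} \subseteq \sqrt{I(\C_{A,i+2})}$ --- the reverse inclusion. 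The correct pointwise statement propagates \emph{failure of injectivity} forward: if $0 \ne f \in [A]_i$ has $Cf = 0$, the no-socle hypothesis in degrees $i$ and $i+1$ produces linear forms $\ell,\ell'$ with $0 \ne \ell\ell' f \in [A]_{i+2}$ and $C(\ell\ell' f) = 0$. So the relevant natural map is between kernels, not cokernels.

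Even after this correction, your plan for the scheme-theoretic upgrade (``spread out over $S$ using freeness'') is not concrete enough to deliver an honest ideal inclusion; pointwise rank-drop only controls radicals. The paper sidesteps this entirely with a different and more direct argument. First it reduces to the case $h_{i+2} = h_{i+4}$: for each subset $J$ of size $h_{i+2}$ of a basis of $[A]_{i+4}$, quotient $A$ by the complementary basis vectors to form $A^J$; each maximal minor of $B_{i+2}$ is then $\det B_{i+2}^J$ for the corresponding quotient, and since $[A^J]_j = [A]_j$ for $j \le i+2$ it suffices to prove the inclusion for each $A^J$. In the square case one then fixes a single \emph{scalar} conic $C_0$ (not the generic one over $S$) for which $\times C_0 : [A]_{i+2} \to [A]_{i+4}$ is bijective and $\times C_0 : [A]_i \to [A]_{i+2}$ is injective --- the kernel argument above guarantees such a $C_0$ exists unless both ideals vanish. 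Using the vertical $\times C_0$ maps to choose compatible bases, $B_i$ becomes literally a column-submatrix of the square $B_{i+2}$, and cofactor expansion along the extra columns writes $\det B_{i+2}$ as an $S$-linear combination of the maximal minors of $B_i$. No spreading or flatness is needed: the key moves are the reduction-to-square via quotients of $A$ and the basis identification via a fixed scalar conic.
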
 
\begin{proof}
The proof is divided into three parts. First, we will show that we can reduce to the case $h_{i+2}=h_{i+4}$. In the second step, we take care of the case when one of the ideals is $0$. Finally, in the third part, we assume $h_{i+2}=h_{i+4}$ and $I(\C_{A,i+2})\neq0\neq I(\C_{A,i})$ and prove the desired inclusion.

\emph{Step 1.} Here we reduce to the case when $h_{i+2}=h_{i+4}$.

Fix bases for $[A]_{i+2}$ and $[A]_{i+4}$. Recall that $I(\C_{A,i+2})$ is the ideal of maximal minors of the $h_{i+4}\times h_{i+2}$ matrix $B_{i+2}$, given by the map
$$S\otimes_k [A]_{i+2} \stackrel{B_{i+2}}\to S\otimes_k [A]_{i+4}.$$
Denote with $\{v_1,\dots, v_{h_{i+4}}\}$ the chosen basis for $[A]_{i+4}$. For each  $J\subseteq \{1,\dots, h_{i+4}\}$ consisting of $h_{i+2}$ elements, we define  $A^J$ to be the algebra obtained by quotienting $A$ by the ideal generated by the forms of degree $i+4$ indexed by elements not in $J$:
$$A^J=\frac{A}{(v_j \ | \ j\notin J)}.$$

Since we quotient by an ideal generated in degree $i+4$, $[A^J]_{j}=[A]_j$ for every $j<i+4$. In particular, $[A^J]_i=[A]_{i}$ and $[A^J]_{i+1}=[A]_{i+1}$, so $I(\C_{A^J,i})=I(\C_{A,i})$. 
Consider the diagram 
 \[ \begin{tikzcd}
S\otimes_k [A]_{i+2} \arrow{r}{B_{i+2}}\arrow[equal]{d}
& S\otimes_k [A]_{i+4} \arrow[two heads]{d}\\
S\otimes_k [A^J]_{i+2} \arrow{r}{B^J}&S\otimes_k [A^J]_{i+4}.
\end{tikzcd}
\]
Under these identifications, the lower map is represented by the submatrix $B^J$, obtained from $B_{i+2}$ by considering just the rows indexed by elements of $J$. In fact, $B^J$ is a maximal square submatrix of $B_{i+2}$, and all the maximum submatrices are of this form, for some $J\subseteq\{1,\dots, h_{i+4}\}$ with  $|J|=h_{i+2}$.
Since $I(\C_{A^J,i+2})$ is defined to be the ideal of maximal minors of $B^J$, we have $(\det B^J)=I(\C_{A^J,i+2})$.

In order to prove that $I(\C_{A,i+2})\subseteq I(\C_{A,i})$, we need to show that each maximal minor of the  $B_{i+2}$ is in the ideal $I(\C_{A,i})$, equivalently that for each $J$, $(\det B^J)\subseteq I(\C_{A,i})$. By the equalities proved above this is the same as to prove that  $I(\C_{A^J,i+2})\subseteq I(\C_{A^J,i})$, for each  $J\subseteq \{1,\dots, h_{i+4}\}$ subset of cardinality $h_{i+2}$.

If we prove the proposition for each  Artinian algebra with  $h_i\leq h_{i+2}=h_{i+4}$ and no socle in degree $i$ and $i+1$, then the statement will follow.


\emph{Step 2.} Consider the case when one of the ideals is zero.
If $I(\C_{A,i+2})=0$, then there is nothing to prove. When $I(\C_{A,i})=0$, it is enough to show that $\C_{A,i+2}=\PP^{N-1}$. 
Assume by contradiction that $\exists C\in [R]_2$ such that the multiplication map $\times C:[A]_{i+2}\to [A]_{i+4}$ is injective. Since we assume $I(\C_{A,i})=0$, $\C_{A,i}=\PP^{N-1}$and, in particular, there exists a $f\in [A]_i$ non-zero with $Cf=0$. $A$ does not have socle in degree $i$ or $i+1$, so $\exists \ell, \ell' \in [R]_1$ such that $f\ell\ell'\neq 0$ in $[A]_{i+2}$. But $C f\ell\ell'=0 $ contradicting injectivity of $\times C:[A]_{i+2}\to [A]_{i+4}$.

\emph{Step 3.} Let us assume $h_{i+4}=h_{i+2}$ and $\C_{A,i+2}\neq \PP^{N-1}\neq  \C_{A,i}$.
Then we can choose a conic $C\in[R]_2$, such that $\times C:[A]_{i+2}\to [A]_{i+4}$ is bijective, and so $\times C:[A]_{i}\to [A]_{i+2}$ injective, by the argument in Step 2. We have the diagram
\[ \begin{tikzcd}
S\otimes_k [A]_{i} \arrow{r}{B_{i}} \arrow[hook, swap]{d}{\times C \ } & S\otimes_k [A]_{i+2} \arrow{d}{\rotatebox{270}{$\sim \ $}}{\ \ \times C}\\%
S\otimes_k [A]_{i+2} \arrow{r}{B_{i+2}}& S\otimes_k [A]_{i+4}.
\end{tikzcd}
\]
Since $C$ is fixed the vertical maps are the identity on $S$. Hence, chosen appropriate basis for $[A]_{i+2}$ and $[A]_{i+4}$, $B_i$ is a submatrix of $B_{i+2}$.
We assume $h_{i+2}=h_{i+4}$, so $B_{i+2}$ is a square matrix, and $B_i$ is a submatrix with the same number of rows $h_{i+2}$ as $B_{i+2}$.  Then,  $\det B_{i+2}$ can be written as a linear combination of the maximal minors of $B_i$, and
$$I(C_{A,i+2})=(\det B_{i+2})\subseteq(\text{max minors of } B_i)=I(C_{A,i})$$
as desired.

\end{proof}

\begin{rmk}\label{dual}
Proposition \ref{matrix} holds if we consider finite length modules $M$ instead of Artinian algebras. Moreover, we also have a dual statement: if $h_i\geq h_{i+2}\geq h_{i+4}$ and there are no new generators in degree $i+3$ and $i+4$, i.e. $(x_1,\dots,x_n)[M]_{i+2}=[M]_{i+3}$ and $(x_1,\dots,x_n)[M]_{i+3}=[M]_{i+4}$, then $I(\C_{M,i+2})\supseteq I(\C_{M,i})$.
\end{rmk}
 
\begin{teo}\label{'mid}
	Let $A$ be a Gorenstein Artinian algebra of socle decree $e$. If $A$ has the Weak Lefschetz Property, then the non-Lefschetz locus of forms of degree $2$ is defined as a subscheme of $\PP^{N-1}$ by the ideal in the middle degree $$I\big(\C_A\big)= I\big(\C_{A,\lfloor\frac{e}{2}\rfloor-1}\big).$$
\end{teo}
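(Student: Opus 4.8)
The plan is to reduce the statement to the single inclusion $I(\C_{A,m-1})\subseteq I(\C_{A,i})$ for every $0\le i\le e-2$, where $m:=\lfloor e/2\rfloor$; since $I(\C_A)=\bigcap_i I(\C_{A,i})\subseteq I(\C_{A,m-1})$ automatically, this is equivalent to the theorem. For $e\le 3$ the statement is immediate --- only the duality below is needed, e.g.\ $I(\C_{A,0})=I(\C_{A,1})$ when $e=3$ --- so assume $e\ge 4$, hence $m\ge 2$. I would use three ingredients. (i) \emph{Gorenstein duality}: the perfect pairing $[A]_j\times[A]_{e-j}\to[A]_e\cong k$ is $A$-bilinear, so for every conic $C$ the maps $\times C\colon[A]_j\to[A]_{j+2}$ and $\times C\colon[A]_{e-j-2}\to[A]_{e-j}$ are mutually transpose; choosing dual bases makes $B_j$ and $B_{e-j-2}$ transposes of each other as matrices over $S$, whence $I(\C_{A,j})=I(\C_{A,e-j-2})$ for every $j$ in range. (ii) Since $A$ is Gorenstein with the WLP, its Hilbert function $(h_0,\dots,h_e)$ is symmetric and unimodal, and its socle lies only in degree $e$; therefore Proposition~\ref{matrix} gives $I(\C_{A,j+2})\subseteq I(\C_{A,j})$ whenever $h_j\le h_{j+2}\le h_{j+4}$ (the socle hypothesis being vacuous for $j\le e-2$). (iii) The bridging inclusion $I(\C_{A,m-1})\subseteq I(\C_{A,m-2})$, established next.

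For (iii), fix a Lefschetz element $\ell$. As $h_{m-2}\le h_{m-1}$ by unimodality, $\times\ell\colon[A]_{m-2}\to[A]_{m-1}$ is injective, so we may choose a basis $f_1,\dots,f_{h_{m-2}}$ of $[A]_{m-2}$ and extend $\ell f_1,\dots,\ell f_{h_{m-2}}$ to a basis of $[A]_{m-1}$; fix arbitrary bases of $[A]_m$ and $[A]_{m+1}$ (the ideal of maximal minors does not depend on the chosen bases). From the commutative square $(\times C)\circ(\times\ell)=(\times\ell)\circ(\times C)\colon[A]_{m-2}\to[A]_{m+1}$ one reads off that, for every $C$ simultaneously, the first $h_{m-2}$ columns of $B_{m-1}$ equal $L\,B_{m-2}$, where $L$ is the constant matrix of $\times\ell\colon[A]_m\to[A]_{m+1}$; this is an identity of matrices over $S$. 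By symmetry and unimodality $h_{m-1}\le h_{m+1}$, so every maximal minor of $B_{m-1}$ uses all $h_{m-1}$ columns; expanding such a minor by Laplace along the first $h_{m-2}$ columns exhibits it as an $S$-linear combination of $h_{m-2}\times h_{m-2}$ minors of $L\,B_{m-2}$, each of which, by the Cauchy--Binet formula, is an $S$-combination of maximal minors of $B_{m-2}$ and hence lies in $I(\C_{A,m-2})$. Therefore $I(\C_{A,m-1})\subseteq I(\C_{A,m-2})$.

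To assemble, fix $i$ with $0\le i\le e-2$ and distinguish cases. If $i\le m-1$ and $i\equiv m-1\pmod 2$, iterate (ii) along $j=i,\,i+2,\,\dots,\,m-3$: the inequalities $h_j\le h_{j+2}\le h_{j+4}$ hold on the increasing branch of the Hilbert function, the final step $h_{m-3}\le h_{m-1}\le h_{m+1}$ using $h_{m-1}\le h_{m+1}$; this gives $I(\C_{A,m-1})\subseteq I(\C_{A,i})$. If $i\le m-1$ and $i\equiv m\pmod 2$, the analogous iteration along $j=i,\dots,m-4$ gives $I(\C_{A,m-2})\subseteq I(\C_{A,i})$, and (iii) then yields $I(\C_{A,m-1})\subseteq I(\C_{A,m-2})\subseteq I(\C_{A,i})$. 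Finally, if $m\le i\le e-2$, then by (i) $I(\C_{A,i})=I(\C_{A,e-i-2})$ with $e-i-2\le m-2$, which reduces to a case already treated. The main obstacle is step (iii): the set-theoretic containment $\C_{A,m-2}\subseteq\C_{A,m-1}$ is easy (if $0\ne f\in[A]_{m-2}$ satisfies $Cf=0$, then $\ell f\ne 0$ and $C\ell f=0$), but promoting it to the scheme-theoretic inclusion of ideals is precisely what the block identity $B_{m-1}=[\,L\,B_{m-2}\mid\ast\,]$ together with the Laplace/Cauchy--Binet computation accomplishes; the one point demanding care is that this block identity holds uniformly in the coefficients of $C$, which is guaranteed by the commutative square.
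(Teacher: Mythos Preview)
Your proof is correct and follows the same architecture as the paper's: Gorenstein duality reduces to indices $\le m-1$, Proposition~\ref{matrix} handles each parity chain separately, and the crux is the bridging inclusion $I(\C_{A,m-1})\subseteq I(\C_{A,m-2})$. The difference lies entirely in this bridging step. The paper splits on the parity of $e$: for $e$ odd it exploits that $\times\ell\colon[A]_m\to[A]_{m+1}$ is a \emph{bijection} (since $h_m=h_{m+1}$), so after a change of basis $B_{m-2}$ is literally a submatrix of $B_{m-1}$ with the same number of rows; for $e$ even it first passes by duality to the equivalent inclusion $I(\C_{A,m-1})\subseteq I(\C_{A,m})$ and then runs a two-step submatrix argument through the diagonal composite. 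Your Laplace/Cauchy--Binet computation handles both parities uniformly: the block identity $B_{m-1}=[\,L\,B_{m-2}\mid *\,]$ does not require $L$ to be invertible, only that $h_{m-2}\le h_m$ so that Cauchy--Binet produces the correct size of minors of $B_{m-2}$. What your approach buys is uniformity and the avoidance of the detour through $I(\C_{A,m})$ in the even case; what the paper's approach buys is that, in each parity separately, one only needs the elementary observation that a minor of a matrix lies in the ideal of maximal minors of any submatrix with the same number of rows (or columns), without invoking Cauchy--Binet. One harmless slip in your assembly: you write $e-i-2\le m-2$ for $m\le i\le e-2$, but for $e$ odd and $i=m$ one has $e-i-2=m-1$; this case is trivial since then $I(\C_{A,i})=I(\C_{A,m-1})$.
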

\begin{proof}
	The WLP guarantees the unimodality of the Hilbert function. Then applying Proposition \ref{matrix} we
   have that 
  \begin{align*}
    &e \text{ even: }&\vspace{5pt} & \cdots\leq h_{\frac{e}{2}-4}\leq h_{\frac{e}{2}-2}\leq h_{\frac{e}{2}} & \Rightarrow & \vspace{5pt} & \cdots\supseteq I(\C_{A,\frac{e}{2}-4})\supseteq I(C_{A,\frac{e}{2}-2}) \\
     &               && \cdots\leq h_{\frac{e}{2}-3}\leq h_{\frac{e}{2}-1}= h_{\frac{e}{2}+1}& \Rightarrow & \vspace{5pt} &\cdots\supseteq I(\C_{A,\frac{e}{2}-3})\supseteq I(\C_{A,\frac{e}{2}-1}) \\
    &e \text{ odd: }& \vspace{5pt} & \cdots\leq h_{\frac{e-1}{2}-4}\leq h_{\frac{e-1}{2}-2}\leq h_{\frac{e-1}{2}} & \Rightarrow & \vspace{5pt} & \cdots\supseteq I(\C_{A,\frac{e-1}{2}-4})\supseteq I(\C_{A,\frac{e-1}{2}-2})& \\
     &               && \cdots\leq h_{\frac{e-1}{2}-3}\leq h_{\frac{e-1}{2}-1}\leq h_{\frac{e+1}{2}} & \Rightarrow &\vspace{5pt} &  \cdots\supseteq I(\C_{A,\frac{e-1}{2}-3})\supseteq I(\C_{A,\frac{e-1}{2}-1}) \end{align*}
where  $e$ is the socle degree of $A$.

 Since $A$ is Gorenstein, duality shows that, for each $i\geq 0$, $I\big(\C_{A,\lfloor\frac{e}{2}\rfloor-i-2}\big)=I\big(\C_{A,\lfloor\frac{e+1}{2}\rfloor+i}\big)$
  and so we have
  $$I\big(\C_A\big)=\bigcap_{i=0}^{e-2}I\big(\C_{A,i}\big)=I\big(\C_{A,\lfloor\frac{e}{2}\rfloor-1}\big)\cap I\big(\C_{A,\lfloor\frac{e}{2}\rfloor-2}\big).$$
  
 To prove 
 $I\big(\C_A\big)= I\big(\C_{A,\lfloor\frac{e}{2}\rfloor-1}\big)$
 we will show
 $$I\big(\C_{\lfloor\frac{e}{2}\rfloor-1}\big)\subseteq I\big(\C_{\lfloor\frac{e}{2}\rfloor-2}\big)$$
 using the assumption that $A$ has the WLP. We will divide the proof depending on the parity of the socle degree $e$.
 
 \emph{Case $e$ odd.} Fix a Weak Lefschetz element $\ell \in [R]_1$.
 Then $\times\ell:[A]_{\frac{e-1}{2}-2}\to [A]_{\frac{e-1}{2}-1}$ is injective and $\times\ell:[A]_{\frac{e-1}{2}}\to [A]_{\frac{e+1}{2}}$ is bijective ($h_{\frac{e-1}{2}}=h_{\frac{e+1}{2}}$). We have the diagram
 \[ \begin{tikzcd}
S\otimes_k [A]_{\frac{e-1}{2}-2} \arrow{r}{B_{\frac{e-1}{2}-2}} \arrow[hook, swap]{d}{\times\ell \ } & S\otimes_k [A]_{\frac{e-1}{2}} \arrow{d}{\rotatebox{270}{$\sim \ $}}{\ \ \times\ell} \\%
S\otimes_k [A]_{\frac{e-1}{2}-1} \arrow{r}{B_{\frac{e-1}{2}-1}}&\to S\otimes_k [A]_{\frac{e+1}{2}}.
\end{tikzcd}\]
Then, choosing  appropriate bases, we can see $B_{\frac{e-1}{2}-2}$ as a submatrix of $B_{\frac{e-1}{2}-1}$.
Since $h_{\frac{e-1}{2}}=h_{\frac{e+1}{2}}$, $B_{\frac{e-1}{2}-1}$ and $B_{\frac{e-1}{2}-2}$ have the same number of rows, that is greater than or equal to the number of columns of $B_{\frac{e-1}{2}-1}$. This implies that each maximal minor of $B_{\frac{e-1}{2}-1}$ can be seen as a linear combination of maximal minors of $B_{\frac{e-1}{2}-2}$. Hence,
$$I(\C_{A,\frac{e-1}{2}-1})=(\text{max minors of } B_{\frac{e-1}{2}-1})\subseteq(\text{max minors of } B_{\frac{e-1}{2}-2})=I(\C_{A,\frac{e-1}{2}-2}).$$

 \emph{Case $e$ even.} By duality $I(C_{A,\frac{e}{2}-2})=I(C_{A,\frac{e}{2}})$. So it is enough to show 
 $$I\big(\C_{\frac{e}{2}-1}\big)\subseteq I\big(\C_{\frac{e}{2}}\big).$$
 Since we assume $A$ has the WLP we can choose a linear form  $\ell \in [R]_1$ such that the map
 $\times\ell:[A]_{\frac{e}{2}-1}\to [A]_{\frac{e}{2}}$ is injective and $\times\ell:[A]_{\frac{e}{2}+1}\to [A]_{\frac{e}{2}+2}$ is surjective. Then we have the commutative diagram
  \[ \begin{tikzcd}
S\otimes_k [A]_{\frac{e}{2}-1} \arrow{r}{B_{\frac{e}{2}-1}} \arrow[hook, swap]{d}{\times\ell \ }& S\otimes_k [A]_{\frac{e}{2}+1}   \arrow[two heads]{d}{\ \ \times\ell} \\%
S\otimes_k [A]_{\frac{e}{2}} \arrow{r}{B_{\frac{e}{2}}}& S\otimes_k [A]_{\frac{e}{2}+2}
\end{tikzcd}
\]
and, with appropriate choices of bases, the diagonal map 
can be represented by a matrix $B$ of linear forms in $S$ such that $B$ is a submatrix of $B_{\frac{e}{2}-1}$ and $B_{\frac{e}{2}}$ is a submatrix of $B$:
 \[ \begin{tikzcd}
S\otimes_k [A]_{\frac{e}{2}-1} \arrow{r}{B_{\frac{e}{2}-1}} \arrow[hook, swap]{d}{\times\ell \ }\arrow{dr}{B} & S\otimes_k [A]_{\frac{e}{2}+1}  \arrow[two heads]{d}{\ \ \times\ell} \\%
S\otimes_k [A]_{\frac{e}{2}} \arrow{r}{B_{\frac{e}{2}}}& S\otimes_k [A]_{\frac{e}{2}+2}.
\end{tikzcd}
\]
First we notice that $B_{\frac{e}{2}-1}$ is a square sub-matrix of $B$, with the same number of columns as $B$, so
$$I(\C_{A,\frac{e}{2}-1})=(\det B_{\frac{e}{2}-1})\subseteq (\text{max minors of } B).$$
On the other side $B$ is a submatrix of $B_{\frac{e}{2}}$, with the same number of rows, that is smaller or equal to $h_\frac{e}{2}$, the number of columns of $B_{\frac{e}{2}}$. Therefore each maximal minor of $B$ is a maximal minor of $B_{\frac{e}{2}}$ as well.
$$I(\C_{A,\frac{e}{2}-1})\subseteq (\text{max minors of } B)\subseteq(\text{max minors of } B_{\frac{e}{2}})=I(\C_{A,\frac{e}{2}}).$$
\end{proof}
As we can see in the following examples, having the Weak Lefschetz Property and admitting forms of degree $2$ such that the multiplication has maximum rank in each degree are independent properties.
\begin{ex}
    Consider $F=x_1x_2 (x_1^2x_3 - x_2^2x_4 ) \in \kk[x_1,x_2,x_3,x_4]$, and
    $$I= Ann(F)= \left(x_{4}^{2},\,x_{3}x_{4},\,x_{3}^{2},\,x_{1}^{2}x_{4},\,x_{2}^{2}x_{3},\,x_{1}^{2}x_{3}+x_{2}^{2}x_{4},\,x_{2}^{4},\,x_{1}^{2}x_{2}^{2},\,x_{1}^{4}\right).$$
 $R/I$ is a Gorenstein Algebra that fails the WLP as shown in  \cite[Example 3.7]{BinMaculayDual}.
 Using the computer algebra system  \texttt{Macaulay2} \cite{M2} we can show $I(C_A)\neq0$, for example the multiplication map $A_i\to A_{i+2}$ given by $(x_1+x_2+x_3+x_4)^2$ always has maximum rank.
\end{ex}
\begin{ex}
If we consider instead $F=x_1x_2^2(x_{1}^{2}x_{3}-x_{4}^{4}x_{4})$, we obtain the Gorenstein ideal
$$I= Ann(F)= \left(x_{4}^{2},\,x_{3}x_{4},\,x_{3}^{2},\,x_{1}^{2}x_{4},\,2\,x_{1}^{2}x_{3}+x_{2}^{2}x_{4},\,x_{2}^{3}x_{3},\,x_{1}^{4},\,x_{2}^{5},\,x_{1}^{2}x_{2}^{3}\right)$$
    Using  \texttt{Macaulay2} \cite{M2} we can show that the Gorenstein algebra has the WLP  but $I(C_A)=0$.
    In fact, for every $c\in R_2$ the multiplication map $\times c: A_2\to A_4$ is neither injective nor surjective. 
\end{ex}

\section{First cohomology module of a rank $2$ vector bundle over $\PP^2$}

In this section, we will focus on the case of three variables. It is easy to see that all the definitions introduced in the previous section for Artinian algebras can be extended to finite-length modules over the polynomial ring $R$.
For any finite length graded module $M$ over $R=\kk[x_1,x_2,x_3]$, we say that a form $C\in [R]_2$ is a \emph{Lefschetz conic}  if the multiplication map $\times C: [M]_i\to [M]_{i+2}$ has maximum rank in each degree, and we refer to $\C_M$ (Definition \ref{C}) with its structure as a subscheme of $\PP^5$ as \emph{the non-Lefschetz locus of conics} of $M$.

Our goal is to study the non-Lefschetz locus for a height three complete intersection $A=\kk[x_1,x_2,x_3]/(f_1,f_2,f_3)$, and as a generalization any finite length module $M$ that is the cokernel of a
graded map $$\varphi:\bigoplus_{i=1}^{n+2}R(-a_i)\to\bigoplus_{i=1}^{n} R(-b_i).$$

In \cite{FFP21}, the authors showed that  $M=\cok\varphi$ is isomorphic to the first cohomology module of a rank $2$ vector bundle $\E$ over $\PP^2$: $$M\cong\HH_*^1(\PP^2, \E):=\oplus_{t\in\Z}\HH^1(\PP^2, \E(t)).$$ Note that if $n=1$ and $b_1=0$ then the cokernel is a complete intersection $\kk[x_1,x_2,x_3]/(f_1,f_2,f_3)$ with $\deg f_i= a_i$.
These two descriptions are equivalent since any first cohomology module of a rank $2$ vector bundle over $\PP^2$ is of such form. This result is likely known to experts; however, since we were not able to find a proof in the literature, a proof is included for completeness.

\begin{prop}
    Let $\E$ be a vector bundle of rank $2$ over $\PP^2$ and let $M=\HH_*^1(\PP^2, \E)$. Assume $M\neq 0$. Then, there exist a finitely generate module $E$, with sheafification $\tilde{E}=\E$, and integers $n,a_i,b_i$ such that we have an exact sequence
    $$0\to E \to \bigoplus_{i=1}^{n+2}R(-a_i)\to\bigoplus_{i=1}^{n} R(-b_i)\to M\to 0\vspace{-5pt}.$$
\end{prop}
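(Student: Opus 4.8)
The plan is to start from the fact that $\E$ is a rank-$2$ vector bundle on $\PP^2$ and build a minimal free resolution of $M = \HH^1_*(\PP^2,\E)$ as an $R$-module, then reconstruct $\E$ from the tail of that resolution. First I would observe that $M$ is a finite-length graded $R$-module (this is standard: $\HH^1(\PP^2,\E(t)) = 0$ for $t \gg 0$ by Serre vanishing, and for $t \ll 0$ by Serre duality together with the same vanishing applied to $\E^\vee$). A finite-length module over $R = k[x_1,x_2,x_3]$ has projective dimension $3$ by Auslander--Buchsbaum, so its minimal free resolution has the shape
\[
0 \to F_3 \to F_2 \to F_1 \to F_0 \to M \to 0
\]
with each $F_j$ a finite direct sum of twists of $R$. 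Setting $E := \ker(F_1 \to F_0)$, we get a short exact sequence $0 \to F_3 \to F_2 \to E \to 0$, so $E$ is a second syzygy of $M$, hence (being a kernel of a map of free modules over a regular ring and sitting two steps into a length-$3$ resolution) has projective dimension $\leq 1$; in fact $E$ is reflexive and, over $R$, of depth $\geq 2$. The sheafification $\widetilde{E}$ is then a vector bundle on $\PP^2$: a reflexive sheaf on a smooth surface is locally free.

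The next step is to identify $\widetilde{E}$ with $\E$. Sheafifying the exact sequence $0 \to E \to F_1 \to F_0 \to M \to 0$ and using that $\widetilde{M} = 0$ (since $M$ has finite length), we obtain $0 \to \widetilde{E} \to \widetilde{F_1} \to \widetilde{F_0} \to 0$, an exact sequence of vector bundles on $\PP^2$ with $\widetilde{F_1}, \widetilde{F_0}$ sums of line bundles. I would then compare this with a chosen presentation of $\E$. Concretely, choose $a$ with $\HH^1(\PP^2,\E(a)) \ne 0$ avoided appropriately and resolve: by Horrocks-type arguments (or directly), $\E$ fits in $0 \to \E \to \mathcal{G}_1 \to \mathcal{G}_0 \to 0$ with $\mathcal{G}_i$ sums of line bundles and $\HH^1_*$ of this sequence recovering $M$. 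The uniqueness of the minimal such presentation — equivalently, that $E$ and the module of global sections $\bigoplus_t \HH^0(\PP^2,\widetilde{E}(t))$ agree in the relevant range — lets me match the two, giving $\widetilde{E} \cong \E$. The ranks work out: $\rk F_1 - \rk F_0 = \rk E = 2$, so writing $F_1 = \bigoplus_{i=1}^{n+2} R(-a_i)$ and $F_0 = \bigoplus_{i=1}^{n} R(-b_i)$ for suitable $n$ and integers $a_i, b_i$ gives exactly the asserted exact sequence.

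I expect the main obstacle to be the identification $\widetilde{E} \cong \E$ rather than the mere construction of \emph{a} bundle. The subtlety is that a finite-length module does not determine a bundle on the nose — twisting $\E$ by a line bundle, or adding a trivial summand (an "elementary modification" by $\mathcal{O}$'s), changes $\E$ without changing $\HH^1_*$ — so one must be careful about what "$\widetilde{E} = \E$" means and show the minimal resolution pins down the correct bundle in its class. The clean way is to invoke that for a rank-$2$ bundle $\E$ on $\PP^2$ with $M = \HH^1_*(\PP^2,\E)$, one has $E = \mathrm{Syz}_2(M)$ canonically isomorphic to $\bigoplus_t \HH^0(\PP^2,\E(t))$ as graded $R$-modules once $F_0, F_1$ are taken minimal — this follows by chasing cohomology in the two exact sequences and using $\HH^1(\PP^2,\mathcal{O}(t)) = 0$ to see that $\HH^0_*$ of the syzygy sequence is exact. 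Comparing first cohomology in both presentations then forces the isomorphism of sheaves. A short dimension/Chern-class bookkeeping check at the end confirms the ranks $n+2$ and $n$.
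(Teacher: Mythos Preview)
Your approach is workable but takes a harder road than the paper's, and the identification step you correctly flag as the main obstacle is where your argument remains incomplete.

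The paper sidesteps the identification problem by reversing the starting point: instead of resolving $M$ and then trying to recognise $\E$ inside the resolution, it begins with $E := \Gamma_*(\E) = \bigoplus_t \HH^0(\PP^2,\E(t))$, for which $\tilde E \cong \E$ is automatic. A short depth computation (local cohomology gives $\HH^2_{\mathfrak m}(E) = M \neq 0$, forcing $\depth E = 2$, hence $\mathrm{pd}\,E = 1$ by Auslander--Buchsbaum) yields a two-term free resolution $0 \to F' \to F \to E \to 0$. The key move you are missing is then to sheafify and \emph{dualise} this, using the rank-$2$ identity $\check{\E} \cong \E(-c_1(\E))$: after a twist this produces $0 \to \E \to \bigoplus \OO_{\PP^2}(-a_i) \to \bigoplus \OO_{\PP^2}(-b_i) \to 0$ with $\E$ now sitting as the kernel. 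Applying $\Gamma_*$ gives the four-term sequence directly, with $E$ on the left and $M$ on the right; no matching of syzygies is required.

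Your route can be completed, but not by the argument you sketch. Once you have some presentation $0 \to \E \to \mathcal G_1 \to \mathcal G_0 \to 0$, applying $\Gamma_*$ exhibits $\Gamma_*(\E)$ as \emph{a} second syzygy of $M$; Schanuel then gives only a stable isomorphism with your minimal $E'$. To upgrade this to an honest isomorphism you must argue that $\Gamma_*(\E)$ has no free summand (true, since $M \neq 0$ forces $\E$ indecomposable) and that the minimal second syzygy already has rank $2$ (rank $1$ is excluded because a rank-$1$ reflexive module over the UFD $R$ is free, contradicting $\mathrm{pd}\,M = 3$). You do not supply this, and your appeal to ``Horrocks-type arguments'' for the existence of the monad $0 \to \E \to \mathcal G_1 \to \mathcal G_0 \to 0$ is a placeholder for exactly the construction the paper's dualisation trick provides. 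So the strategy is sound, but as written the two crucial steps are asserted rather than proved.
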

\begin{proof}
    Let $\E$ be a rank $2$ vector bundle, and let $M=\HH_*^1(\PP^2, \E):=\bigoplus_{t\in\Z}\HH^1(\PP^2, \E(t))$.
    $M$ is a finite length graded module over $R=\kk[x_1,x_2,x_3]$  \cite[Ch.III, \S 5-7]{Hartshorne}.
    We assumed $M\neq 0$, excluding the case where $\E$ splits.
    Let $E:=\Gamma_*(\PP^2, \E)=\bigoplus_{t\in\Z}\HH^0(\PP^2, \E(t))$. 
    
    $E$ is a finitely generated graded $R-$module and $\tilde{E}\cong\E$ \cite[Proposition III.5.15]{Hartshorne}.
    Using \cite[Theorem 13.21]{24h}, we have that $\HH^2_m(E)=\HH_*^1(\PP^2, \E)=M\neq 0$, hence $\depth E\leq 2$.
    Since $\Gamma_*(\PP^2, \Tilde{\E})=\Gamma_*(\PP^2, \E)=E$ by definition of $E$, the depth of $E$ must be exactly $2$ \cite[Theorem 13.22]{24h} and so its projective dimension is $1$. 
    Then $E$ has a free resolution of length $1$ (so with $2$ free terms), that we can sheafify and dualize to get the exact sequence of sheaves 
    $$0\to  \check{\E} \to \bigoplus_{i=1}^{n+2}\mathcal{O}_{\mathbb{P}^2}(\bar{a}_i)\to\bigoplus_{i=1}^{n} \mathcal{O}_{\mathbb{P}^2} (\bar{b}_i) \to 0$$
    for some integers $\bar{a}_i,\bar{b}_i$.
Since $\E$ is a rank $2$ vector bundle over $\PP^2$, it follows that  
$\check{\E}\cong \E(d)$, where $-d=c_1(\E)$ is the first Chern class of $\E$ \cite{Hartshorne2,OSS}.
Then shifting by $-d$ we obtain the exact sequence of sheaves 
$$0\to  \E \to \bigoplus_{i=1}^{n+2}\mathcal{O}_{\mathbb{P}^2}(-a_i)\to\bigoplus_{i=1}^{n} \mathcal{O}_{\mathbb{P}^2} (-b_i) \to 0$$
where we define $a_i=d-\bar{a}_i$, and $b_i=d-\bar{b}_i$.
Applying the functor $\Gamma_*$ we get the sequence

    $$0\to E\to \bigoplus_{i=1}^{n+2}R(-a_i)\to\bigoplus_{i=1}^{n} R(-b_i) \to \HH_*^1(\PP^2, \E)\to 0$$
 as desired.  
\end{proof}

Let $\E$ be a rank $2$ vector bundle over $\PP^2$ and $M=\HH_*^1(\PP^2, \E)$ be its first cohomology module.

\begin{rmk}
    A conic $C$ is a Lefschetz conic for $M=\HH_*^1(\PP^2, \E)$ if and only if \begin{align*}
    \times C: \equalto{[M]_i }{\HH^1(\PP^2, \E(i))} \to \equalto{[M]_{i+2}}{\HH^1(\PP^2, \E(i+2))}
\end{align*} has maximum rank for each $i$. Since this property is independent of the shift, we can always assume $\E$ is normalized, hence it has first Chern class $c_1(\E)\in\{0,-1\}$ (\cite{OSS}). With an abuse of notation we will label with $\times C$ the map $\HH^1(\PP^2, \E(i)) \to\HH^1(\PP^2, \E(i+2))$.
\end{rmk}

Before proceeding with the study of the non-Lefschetz locus of conics of $M=\HH_*^1(\PP^2, \E)$, we recall some results about vector bundles over projective space that we will use in the following sections.

\begin{df}[\cite{OSS}]
Let $\E$ be a rank $r$ vector bundle over $\PP^n$. Let $c_1(\E)$ be its first Chern class and $\mu(\E)=c_1(\E)/r$ the \emph{slope} of $\E$.
We say that 
\begin{itemize}
    \item $\E$ is \emph{semistable} if for any non-zero coherent subsheaf $\mathcal{F}\subset \E$ the slope satisfies $\mu(\mathcal{F})\leq \mu(\E)$;
    \item $\E$ is \emph{stable} if these inequalities are always strict, i.e.\ if for any non-zero coherent subsheaf $\mathcal{F}\subset \E$, we have $\mu(\mathcal{F})<\mu(\E)$;
    \item $\E$ is \emph{unstable} if it is not semistable.
\end{itemize}
\end{df}

Since we restrict our attention to the case of (normalized) rank $2$ vector bundles,  we use the following equivalent classification as the defining property for stability.
\begin{lemma}[\cite{OSS}]\label{stab}
    Let $\E$ be a normalized rank $2$ vector bundle over $\PP^n$. Then
\begin{itemize}
    \item $\E$ is stable if and only if it has no global sections, i.e. $\HH^0(\PP^n, \E)=0$;
     \item if $c_1(\E)=-1$, then stability and semistability are equivalent;
     \item if $c_1(\E)=0$, then $\E$ is semistable if and only if  $\HH^0(\PP^n, \E(-1))=0$. 
\end{itemize}
\end{lemma}

\begin{df}[\cite{FFP21}]
   For an unstable normalized rank $2$ vector bundle  over $\PP^n$ we can define the \emph{instability index} of $\E$ as the largest integer $k$ such that $\HH^0(\PP^n, \E(-k))\neq0$. \end{df}
As a consequence, for an unstable vector bundle $\E$
\begin{itemize}
    \item $k>0$ if $c_1(\E)=0$
    \item $k\geq0$ if $c_1(\E)=-1$.
\end{itemize}

Recall that every vector bundle over $\PP^1$ splits by Grothendieck's Theorem \cite{OSS}, hence, for a rank $2$ vector bundle over $\PP^n$, $\E_{|\ell}$ splits for any line $\ell$ in $\PP^n$.
\begin{df}[\cite{OSS}]
	Given a line $\ell$ the \emph{splitting type} of $\E$ on $\ell$ is the couple $(a,b)$ where $\E_{|\ell}\cong\mathcal{O}_{\mathbb{P}^1}(a)\oplus\mathcal{O}_{\mathbb{P}^1}(b)$.
\end{df}

\begin{teo}[Grauert--M\"{u}lich Theorem \cite{OSS}]
Let $\E$ be a semistable normalized rank $2$ vector bundle on $\PP^n$ and let $\ell$ be a general line. Then
	\[\E_{|\ell}\cong\begin{cases}
	    \mathcal{O}_{\mathbb{P}^1}\oplus\mathcal{O}_{\mathbb{P}^1} & \text{ if } c_1(\E)=0;\\
        \mathcal{O}_{\mathbb{P}^1}(-1)\oplus\mathcal{O}_{\mathbb{P}^1}& \text{ if } c_1(\E)=-1.
	\end{cases}\]
\end{teo}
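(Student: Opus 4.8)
The plan is to reproduce the classical proof of the Grauert--M\"ulich theorem (see \cite{OSS}) in the rank $2$ case, using the incidence variety of points lying on lines. Let $\mathbb{G}$ be the Grassmannian of lines in $\PP^n$ and set
\[ F=\{(x,\ell)\in\PP^n\times\mathbb{G}\ :\ x\in\ell\}, \]
with projections $p\colon F\to\PP^n$, whose fibre over $x$ is the $\PP^{n-1}$ of lines through $x$, and $q\colon F\to\mathbb{G}$, a $\PP^1$-bundle whose fibre over $\ell$ is $\ell$ itself; under $F\cong\PP(\mathcal S)$ for the tautological rank $2$ subbundle $\mathcal S$ on $\mathbb{G}$ one has $\OO_q(1)=p^*\OO_{\PP^n}(1)$. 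A general line realizes the generic splitting type $(a,b)$ with $a\ge b$ and $a+b=c_1(\E)$, and since $a-b\equiv c_1(\E)\pmod 2$ the theorem is equivalent to saying that this generic type is balanced, i.e.\ $a-b\le 1$. So I would argue by contradiction, assuming $a-b\ge 2$; note that twisting $\E$ affects neither semistability nor the statement, so we may keep $\E$ normalized throughout.

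Over the dense open set $U\subseteq\mathbb{G}$ where $\E_{|\ell}$ has the generic splitting type, $p^*\E\otimes\OO_q(-a)$ restricts on each $q$-fibre to $\OO_{\PP^1}\oplus\OO_{\PP^1}(b-a)$ with $b-a\le-2$, so by cohomology and base change $\mathcal N:=q_*\bigl(p^*\E\otimes\OO_q(-a)\bigr)$ is a line bundle on $U$ and the evaluation map exhibits $\mathcal A:=q^*\mathcal N\otimes\OO_q(a)$ as a sub-line-bundle of $p^*\E$ over $q^{-1}(U)$ which on each general line $\ell$ equals the $\OO_\ell(a)$-summand of $\E_{|\ell}$. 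Since $p^*\E$ is a pullback it carries the flat relative connection $\nabla\colon p^*\E\to p^*\E\otimes\Omega_{F/\PP^n}$, and composing the inclusion of $\mathcal A$ with $\nabla$ followed by the projection $p^*\E\to p^*\E/\mathcal A$ kills the Leibniz term, producing an $\OO_F$-linear second fundamental form
\[ \bar\sigma\colon\mathcal A\longrightarrow (p^*\E/\mathcal A)\otimes\Omega_{F/\PP^n} \]
over $q^{-1}(U)$. The proof then divides into two cases according to whether $\bar\sigma$ vanishes.

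If $\bar\sigma=0$, then $\mathcal A$ is a $\nabla$-horizontal subbundle, hence by the standard descent for flat subbundles it is the pullback of a saturated sub-line-bundle $\mathcal A_0\subseteq\E$ defined over a dense open subset of $\PP^n$; extending $\mathcal A_0$ over all of $\PP^n$ (a reflexive rank $1$ sheaf on a smooth variety is a line bundle) and reading off its degree on a general line gives $\OO_{\PP^n}(a)\hookrightarrow\E$. But $a-b\ge 2$ forces $a>\tfrac12 c_1(\E)=\mu(\E)$, contradicting semistability. If $\bar\sigma\ne 0$, I restrict it to a general line $\ell\in U$ on which it does not vanish: from $N_{\ell/\PP^n}\cong\OO_{\PP^1}(1)^{\oplus(n-1)}$ and $T_\ell\cong\OO_{\PP^1}(2)$ one computes $\Omega_{F/\PP^n}|_\ell\cong\OO_{\PP^1}(1)^{\oplus(n-1)}$, so $\bar\sigma|_\ell$ is a nonzero map $\OO_{\PP^1}(a)\to\OO_{\PP^1}(b+1)^{\oplus(n-1)}$, which forces $a\le b+1$ and again contradicts $a-b\ge 2$. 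In either case we reach a contradiction, so the generic splitting type is balanced; for a normalized rank $2$ bundle this is $\OO_{\PP^1}\oplus\OO_{\PP^1}$ when $c_1(\E)=0$ and $\OO_{\PP^1}(-1)\oplus\OO_{\PP^1}$ when $c_1(\E)=-1$, which is the claim.

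The step I expect to be the main obstacle is the case $\bar\sigma=0$: one must track carefully the open subsets of $\mathbb{G}$ and of $\PP^n$ over which $\mathcal A$, its descended subsheaf, and the extension are defined, in order to conclude that the resulting sub-line-bundle of $\E$ really has degree exactly $a$ along the general line. Verifying the $\OO_F$-linearity of $\bar\sigma$ and the descent statement for flat subbundles are standard but also warrant care. Of course, since the statement is literally the rank $2$ case of the Grauert--M\"ulich theorem, one may also just cite \cite{OSS}.
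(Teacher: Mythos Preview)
The paper does not prove this statement: it is quoted verbatim as a classical result with a citation to \cite{OSS}, and no argument is supplied. Your proposal correctly reproduces the standard incidence-variety proof of Grauert--M\"ulich in rank $2$ (relative Harder--Narasimhan subbundle, second fundamental form, and the dichotomy on $\bar\sigma$), so there is nothing substantive to compare---you have written out precisely the argument the paper is invoking by reference.

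Two minor remarks on your write-up. First, your computation $\Omega_{F/\PP^n}|_\ell\cong\OO_{\PP^1}(1)^{\oplus(n-1)}$ is correct, but the justification you give (``from $N_{\ell/\PP^n}\cong\OO_{\PP^1}(1)^{\oplus(n-1)}$ and $T_\ell\cong\OO_{\PP^1}(2)$'') is terse to the point of being cryptic; it would be clearer to note that $F\cong\PP(T_{\PP^n})$ over $\PP^n$, so along the section $\ell$ one has $T_{F/\PP^n}|_\ell\cong N_{\ell/\PP^n}\otimes T_\ell^{-1}\cong\OO_{\PP^1}(-1)^{\oplus(n-1)}$. Second, your own final sentence already gives the most honest summary: since the paper treats this as background, simply citing \cite{OSS} is exactly what is done.
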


We are mainly interested in rank $2$ vector bundles on $\PP^2$. The Grauert--M\"{u}lich Theorem gives us the splitting type of $\E$ over a general line $\ell$ when $\E$ is semistable. For $\E$ unstable we have:
\begin{teo}[\cite{FFP21}]\label{GM}Let $\E$ be an unstable normalized vector bundle of rank $2$ over $\PP^2$, and let $k$ be the instability index of $\E$. Then for a general line $\ell$ 
	\[\E_{|\ell}\cong\begin{cases}
	     \mathcal{O}_{\mathbb{P}^1}(-k)\oplus\mathcal{O}_{\mathbb{P}^1}(k)& \text{ if } c_1(\E)=0;\\
        \mathcal{O}_{\mathbb{P}^1}(-k-1)\oplus\mathcal{O}_{\mathbb{P}^1}(k) & \text{ if } c_1(\E)=-1.
	\end{cases}\]
\end{teo}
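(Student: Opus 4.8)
The plan is to use the destabilizing section provided by the instability index; in contrast to the semistable Grauert--M\"ulich theorem, this makes the unstable case quite direct. Write $c_1=c_1(\E)\in\{0,-1\}$ and let $k$ be the instability index, so there is a nonzero section $s\in\HH^0(\PP^2,\E(-k))$ while $\HH^0(\PP^2,\E(-k-1))=0$. I would first regard $s$ as an injection $\OO_{\PP^2}(k)\hookrightarrow\E$ and argue that its zero scheme $Z$ has no divisorial component: if the codimension-one part of $Z$ were an effective divisor $D$ of degree $\geq 1$, then $s$ would factor as $\OO_{\PP^2}(k)\xrightarrow{\,\cdot f_D\,}\OO_{\PP^2}(k+\deg D)\hookrightarrow\E$, giving a nonzero element of $\HH^0(\PP^2,\E(-k-\deg D))$ and contradicting the maximality of $k$. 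Hence $Z$ is $0$-dimensional (possibly empty), $\OO_{\PP^2}(k)$ is saturated in $\E$, the cokernel is a torsion-free rank-one sheaf, and comparing first Chern classes identifies it, yielding
\[
0\longrightarrow\OO_{\PP^2}(k)\longrightarrow\E\longrightarrow\mathcal{I}_Z(c_1-k)\longrightarrow 0 .
\]

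Next I would restrict to a general line $\ell$. Since $Z$ is a finite set of points, a general $\ell$ is disjoint from $Z$, so $\mathcal{I}_Z|_\ell\cong\OO_\ell$ and $\mathrm{Tor}_1^{\OO_{\PP^2}}\!\big(\mathcal{I}_Z(c_1-k),\OO_\ell\big)$, being supported on $Z\cap\ell=\varnothing$, vanishes; tensoring the sequence above with $\OO_\ell$ therefore keeps it exact:
\[
0\longrightarrow\OO_{\PP^1}(k)\longrightarrow\E|_\ell\longrightarrow\OO_{\PP^1}(c_1-k)\longrightarrow 0 .
\]
This extension of $\OO_{\PP^1}(c_1-k)$ by $\OO_{\PP^1}(k)$ is classified by $\Ext^1_{\PP^1}\!\big(\OO_{\PP^1}(c_1-k),\OO_{\PP^1}(k)\big)\cong\HH^1\big(\PP^1,\OO_{\PP^1}(2k-c_1)\big)$, which is zero because $2k-c_1\geq -1$ (indeed $k\geq 1$ when $c_1=0$ and $k\geq 0$ when $c_1=-1$). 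So the sequence splits and $\E|_\ell\cong\OO_{\PP^1}(k)\oplus\OO_{\PP^1}(c_1-k)$, i.e. $\OO_{\PP^1}(-k)\oplus\OO_{\PP^1}(k)$ if $c_1=0$ and $\OO_{\PP^1}(-k-1)\oplus\OO_{\PP^1}(k)$ if $c_1=-1$, as claimed.

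The main thing to get right is the first step: showing that the maximal destabilizing section vanishes in codimension $\geq 2$ and correctly identifying the quotient as $\mathcal{I}_Z(c_1-k)$, since this is where the maximality of the instability index enters and where the argument genuinely differs from the semistable situation (there one has no such section and must instead run a relative-cohomology argument over the dual plane). One should also check that the set of admissible lines is dense, which is clear because it is the complement of finitely many lines in the dual projective plane.
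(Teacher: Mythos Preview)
Your argument is correct. Note, however, that the paper does not actually prove this theorem: it is quoted from \cite{FFP21} and stated without proof, so there is no ``paper's own proof'' to compare against here.

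That said, your approach is precisely the one the paper itself employs a little later for the analogous statement about general smooth \emph{conics} (the proposition showing $\E_{|C}\cong\OO_{\PP^1}(2k)\oplus\OO_{\PP^1}(-2k)$ or $\OO_{\PP^1}(2k)\oplus\OO_{\PP^1}(-2k-2)$ for $\E$ unstable): take a nonzero section of $\E(-k)$, observe its vanishing locus is zero-dimensional, get the short exact sequence $0\to\OO_{\PP^2}(k)\to\E\to\mathcal{I}_Z(c_1-k)\to 0$, restrict to a general curve missing $Z$, and split via $\Ext^1$-vanishing. So your proof is both correct and in the same spirit as the surrounding material. One small remark: in your first step you could simply say that any nonzero section of $\E(-k)$ is automatically regular because $H^0(\E(-k-1))=0$ forces the absence of a divisorial factor; this is exactly what the paper asserts in passing (``any non-zero section $s\in\HH^0(\PP^2,\E(-k))$ is regular'').
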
 
\begin{df}
	A \emph{jumping line} is a linear element $\ell$ with splitting type different from the splitting type of a general line.
\end{df}

Finally, we have the following definition:
\begin{df}[\cite{jump}]\label{secondtype}
   For a semistable rank $2$ vector bundle $\E$ over $\PP^2$ with first Chern class odd, a line $\ell$ is called a \emph{jumping line of the second kind} for $\E$ when $\HH^0(\PP^2,\E_{| \ell^2})\neq0$.
\end{df}
Hulek in \cite{jump} proved the following results in this case:
\begin{teo}[\cite{jump}]\label{secondjump}
    Let $\E$ be a semistable rank $2$ vector bundle over $\PP^2$ with $c_1(\E)=-1$. Then the jumping lines of the second kind form a curve of degree $2(c_2(\E)-1)$.
\end{teo}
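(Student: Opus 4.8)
The plan is to realize the locus of jumping lines of the second kind as the degeneracy scheme of a morphism between vector bundles on the dual plane $\check{\PP}^2$ and to compute its class by Grothendieck--Riemann--Roch. Since the second-kind condition is invariant under twisting $\E$, we may normalize; in the odd case of Definition~\ref{secondtype} take $c_1(\E)=-1$ (the even normalization is entirely analogous, replacing $\E_{|\ell^2}$ by $\E_{|\ell^2}(-1)$). The crucial numerical fact is that $\chi(\E_{|\ell^2})=0$ for \emph{every} line $\ell$: from the conormal sequence $0\to\E_{|\ell}(-1)\to\E_{|\ell^2}\to\E_{|\ell}\to 0$ and $\deg(\E_{|\ell})=c_1(\E)$ one gets $\chi(\E_{|\ell^2})=\chi(\E_{|\ell})+\chi(\E_{|\ell}(-1))=1+(-1)=0$. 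Let $\mathbb{F}=\{(x,\ell):x\in\ell\}\subset\PP^2\times\check{\PP}^2$ be the incidence variety, a smooth $(1,1)$-divisor with ideal $\OO(-1,-1)$, with projection $p$ to $\PP^2$; let $\mathbb{F}^{(2)}$ be the subscheme cut out by the square of that ideal, so that $q^{(2)}\colon\mathbb{F}^{(2)}\to\check{\PP}^2$ is flat with fiber the double line $\ell^2$ over $\ell$. Writing $\mathcal F=(p^*\E)|_{\mathbb{F}^{(2)}}$, the complex $Rq^{(2)}_*\mathcal F$ is perfect of amplitude $[0,1]$, represented by a two-term complex $[W_0\xrightarrow{\delta}W_1]$ of locally free sheaves on $\check{\PP}^2$ whose kernel and cokernel compute, fiberwise and compatibly with base change, $\HH^0(\PP^2,\E_{|\ell^2})$ and $\HH^1(\PP^2,\E_{|\ell^2})$. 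Because $\chi(\E_{|\ell^2})=0$, the bundles $W_0$ and $W_1$ have equal rank, and once we know that a general line is not of the second kind, $\delta$ is generically an isomorphism, so the second-kind jumping scheme is the effective divisor $\{\det\delta=0\}$, of class $c_1(W_1)-c_1(W_0)=-c_1\big(Rq^{(2)}_*\mathcal F\big)$. In particular it is a curve, carrying a canonical scheme structure.

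The remaining task is to evaluate $\deg\big(c_1(W_1)-c_1(W_0)\big)$. I would apply GRR to $q^{(2)}$, giving $\operatorname{ch}(Rq^{(2)}_*\mathcal F)=q^{(2)}_*\!\big(\operatorname{ch}(\mathcal F)\cdot\operatorname{td}(T_{q^{(2)}})\big)$, and then move the computation onto the \emph{smooth} $\PP^1$-bundle $q\colon\mathbb{F}\to\check{\PP}^2$ by means of the filtration $0\to(p^*\E\otimes\OO(-1,-1))|_{\mathbb{F}}\to\mathcal F\to(p^*\E)|_{\mathbb{F}}\to 0$. On $\mathbb{F}$ the Chern character of $p^*\E$ is determined by $c_1(\E)=-1$ and $c_2(\E)$, and the fiber integrals over the $\PP^1$-bundle are completely explicit; carrying the two twists by $\OO(-1,-1)$ through the computation yields $\deg\big(c_1(W_1)-c_1(W_0)\big)=2(c_2(\E)-1)$. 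An equivalent and more hands-on way to pin the additive constant down is to intersect the curve with a general line of $\check{\PP}^2$, i.e. with the pencil of lines through a general point $x_0\in\PP^2$: pulling $\E$ back to the blow-up $X=\mathrm{Bl}_{x_0}\PP^2$ with its ruling $X\to\PP^1$, the relative double line becomes an explicit infinitesimal neighborhood in the surface $X$, and the number of second-kind members of the pencil is a Chern number on $X$ which works out to $2(c_2(\E)-1)$. (As a sanity check, every jumping line of the first kind is automatically of the second kind, since an unbalanced splitting $\E_{|\ell}\cong\OO_{\PP^1}(a)\oplus\OO_{\PP^1}(-1-a)$ with $a\ge 1$ already forces $\HH^0(\E_{|\ell^2})\ne 0$.)

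The main obstacle is twofold. The routine-but-delicate part is the GRR bookkeeping with the non-reduced total space $\mathbb{F}^{(2)}$ and the correct twists, so that the additive constant comes out as $-2$ and not something else; the filtration above makes this manageable by reducing everything to integrals over the honest $\PP^1$-bundle $\mathbb{F}$. The genuinely substantial point is proving that a general line is \emph{not} of the second kind --- equivalently, that $\delta$ is generically an isomorphism, so that $\{\det\delta=0\}$ is a proper divisor with no excess-dimensional or embedded components. This is where semistability is indispensable: the Grauert--M\"ulich theorem gives that a general line has balanced splitting type, and one then needs a further analysis (as in Hulek's treatment) of the restriction of $\E$ to the generic double line to see that the connecting map in the conormal sequence $\HH^0(\E_{|\ell})\to\HH^1(\E_{|\ell}(-1))$ is generically an isomorphism. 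Granting these points, the curve is nonempty as soon as $c_2(\E)\ge 2$, while for $c_2(\E)=1$ the assertion degenerates to the (easily checked) statement that such bundles --- for instance $\E=T_{\PP^2}(-2)$ --- have no jumping lines of the second kind at all.
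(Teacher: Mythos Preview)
The paper does not prove this theorem; it is quoted from Hulek's work \cite{jump} and used as a black box (specifically, it is invoked in the proof of Proposition~\ref{LC}, Case~2). There is therefore no in-paper argument to compare your proposal against.

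That said, two remarks on your sketch. First, you have correctly detected an inconsistency in the paper: the theorem is stated for $c_1(\E)=0$, but the preamble to Definition~\ref{secondtype} says ``first Chern class odd,'' and Proposition~\ref{LC} applies the result in the case $c_1(\E)=-1$. Hulek's paper \cite{jump} is about the odd case, so your choice to work with $c_1=-1$ and the condition $h^0(\E_{|\ell^2})\neq 0$ is the right one; the ``$c_1(\E)=0$'' in the statement appears to be a typo. Second, your outline is essentially Hulek's own strategy: push $\E$ forward along the universal double line to $\check{\PP}^2$, use $\chi(\E_{|\ell^2})=0$ to get a two-term complex of equal-rank bundles, and read off the divisor class $c_1(W_1)-c_1(W_0)$ by GRR (or, as you also suggest, by intersecting with a general pencil). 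The step you flag as ``genuinely substantial'' --- showing that a general line is not of the second kind, so that $\det\delta$ is not identically zero --- is exactly where Hulek's argument has real content beyond the Chern-class bookkeeping.
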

\begin{teo}[\cite{jump}]\label{thmjumping}
    Let $\E$ be a general semistable rank $2$ vector bundle over $\PP^2$ with $c_1(\E)=-1$ and second Chern class $c_2(\E)$. Then there are $\binom{c_2(\E)}{2}$ jumping lines, and they correspond to the singular points of the curve formed by the jumping lines of the second kind of $\E$.
\end{teo}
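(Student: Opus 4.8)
The plan is to realize the jumping lines of $\E$ as the rank-degeneracy locus of a map of vector bundles on the dual plane $(\PP^2)^{\ast}$, to realize the curve $S(\E)$ of jumping lines of second type as the zero locus of a determinant, and then to compare the two loci. To begin, I pass to the relative setting over $(\PP^2)^{\ast}=\{\text{lines }\ell\subset\PP^2\}$: restricting the (twisted) Koszul sequences of $\ell$ and of $\ell^2$ to $\E$ and taking relative cohomology along the universal line, multiplication by the defining form of $\ell$, resp.\ of $\ell^2$, induces maps of locally free sheaves (local freeness by a standard cohomology-and-base-change check)
\[
\psi\colon \HH^1(\PP^2,\E(-a))\otimes\OO_{(\PP^2)^{\ast}}(-1)\longrightarrow \HH^1(\PP^2,\E(-a+1))\otimes\OO_{(\PP^2)^{\ast}},
\]
and $\psi'$ between the analogous sheaves attached to the double line, where the twist $a$ is the one dictated by the parity of $c_1(\E)$ so that the generic splitting type of $\E$ is recorded by the generic rank. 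Chasing the long exact cohomology sequences shows that $\E$ jumps on $\ell$ precisely when $\psi$ drops rank at $\ell$, and that $\ell$ is a jumping line of second type precisely when $\psi'$ fails to be injective there; since $\psi'$ has equal-rank source and target, $S(\E)=\{\det\psi'=0\}$, and reading off the line bundle in which $\det\psi'$ lives recovers Theorem~\ref{secondjump}: $S(\E)$ is a plane curve of degree $2(c_2(\E)-1)$.

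Next I count the jumping lines by Thom--Porteous. A Riemann--Roch computation expresses the ranks of the two bundles in $\psi$ in terms of $c_2(\E)$; it then follows that the corank-one locus of $\psi$ — that is, the jumping lines — has expected codimension $2$ in $(\PP^2)^{\ast}$, hence is a finite scheme, and the Thom--Porteous / Giambelli formula evaluates its class as $\binom{c_2(\E)}{2}$. For general $\E$ this expected length is attained and the scheme is reduced; I would justify this either by writing down a single bundle — via a Beilinson-type monad, or as a general extension of line bundles along an auxiliary zero-dimensional scheme — for which the degeneracy locus is manifestly transverse, and then appealing to semicontinuity across the (irreducible) moduli space of semistable bundles.

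Finally I compare the two loci. The factorization $\psi'=\psi\circ(\cdot\,\ell)$ through $\HH^1(\PP^2,\E(-a))$ lets one compute $d(\det\psi')$ at a point where $\psi$ drops rank and check that it vanishes there, so every jumping line is a singular point of $S(\E)$; conversely, away from the degeneracy locus of $\psi$ the section $\det\psi'$ is transverse to the zero section and $S(\E)$ is smooth. Thus $\mathrm{Sing}\,S(\E)$ is exactly the finite scheme of $\binom{c_2(\E)}{2}$ jumping lines. For general $\E$, a local normal form at such a point — where $\ker\psi$ is two-dimensional and moves in a pencil as $\ell$ varies — identifies the singularity as an ordinary node, and comparing with the number of nodes allowed by the arithmetic genus of a degree-$2(c_2(\E)-1)$ plane curve shows there is nothing worse. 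I expect the last step to be the main obstacle: establishing, for the \emph{general} bundle, that $S(\E)$ is honestly nodal with its singular points being exactly the jumping lines requires the delicate local study of these degeneracy loci together with a transversality input that is genuinely non-formal.
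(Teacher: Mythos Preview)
The paper does not prove this statement: Theorem~\ref{thmjumping} is quoted from Hulek \cite{jump} as background, with no argument supplied beyond the citation. So there is nothing in the paper to compare your proposal against.

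That said, your sketch is broadly in the spirit of the classical approach (Barth, Hulek): one realizes the jumping lines as a determinantal locus of a map of bundles on $(\PP^2)^\ast$ and reads off the count via Porteous, and one realizes $S(\E)$ as a determinantal curve whose singular locus is then identified with the jumping lines. A couple of points to tighten. First, the factorization you write, $\psi'=\psi\circ(\cdot\,\ell)$, is not literally correct: the map attached to $\ell^2$ does not simply factor through the map attached to $\ell$ as a composition of bundle maps in the way you suggest; rather, one relates the cohomology of $\E_{|\ell^2}$ to that of $\E_{|\ell}$ via the filtration $0\to\E_{|\ell}(-1)\to\E_{|\ell^2}\to\E_{|\ell}\to 0$, and it is the resulting $2\times 2$ block structure of $\psi'$ (with $\psi$ appearing on the diagonal and a derivative-type term off the diagonal) that forces $\det\psi'$ to vanish to order at least two where $\psi$ drops rank. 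Second, the ``nothing worse than nodes'' step is indeed the delicate one, and in Hulek's treatment it is handled not by a direct local computation but by interpreting $S(\E)$ as the discriminant of a net of quadrics associated to $\E$ and invoking the classical theory of such nets for a generic choice; your proposal to exhibit one good bundle and use semicontinuity is a reasonable alternative, but as written it is only a plan.
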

\begin{rmk}\label{moduli}
    We can talk about \emph{general vector bundle} since the moduli space of the rank $2$ vector bundles over $\PP^2$ with $c_1=-1$ and fixed second Chern class $c_2$ is irreducible by \cite{jump}. Similarly, also the moduli space of the rank $2$ vector bundles with $c_1=0$  and fixed second Chern class $c_2$ is irreducible (see \cite{OSS}).
\end{rmk}

\section{Strong Lefschetz property at range 2}

To study the non-Lefschetz locus of conics $\C_M\subset \PP^5$ we first want to show that it has positive codimension. Equivalently, we will show that there exists a Lefschetz conic. In fact, we will prove that there exists a linear form $\ell\in [R]_1$ such that the multiplication map 
$\times \ell^2: [M]_i\to [M]_{i+2}$ has maximum rank in each degree. 

\begin{prop}\label{LC}
    Let $M=\HH_*^1(\PP^2,\E)$ be the first cohomology module of $\E$, a rank $2$ vector bundle over $\PP^2$.  Then the multiplication map $\times \ell^2: [M]_i\to [M]_{i+2}$, for a general linear form $\ell\in [R]_1$, has maximum rank in each degree.
    Moreover, the set of lines $\ell$ that fail to have this property form a hypersurface in $(\PP^2)^*$.
\end{prop}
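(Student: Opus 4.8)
The plan is to reduce everything to a statement about restriction of $\E$ to a line. For a general line $\ell \subset \PP^2$, Grothendieck's theorem gives $\E_{|\ell} \cong \OO_{\PP^1}(a) \oplus \OO_{\PP^1}(b)$ with $a+b = c_1(\E) \in \{0,-1\}$; by Grauert--M\"ulich (for $\E$ semistable) or by Theorem \ref{GM} (for $\E$ unstable), the splitting type over a \emph{general} $\ell$ is the ``balanced'' one. The first step is to recall the standard exact sequence $0 \to \E(-1) \to \E \to \E_{|\ell} \to 0$, twist by $t$ and take cohomology, obtaining for each $t$ a piece of the long exact sequence relating $\HH^1(\PP^2, \E(t-1))$, $\HH^1(\PP^2, \E(t))$, and $\HH^1(\ell, \E_{|\ell}(t))$. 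Iterating this (composing the two restriction-type sequences, or restricting directly to the length-two scheme $\ell^2$) lets me compare $\times \ell^2 : [M]_{t-1} \to [M]_{t+1}$ with the behaviour of $\HH^1$ of $\E_{|\ell^2}$, which is controlled entirely by the splitting numbers $a,b$ since $\ell^2$ is a degree-$2$ subscheme supported on $\PP^1$ (so $\E_{|\ell^2}$ is an extension whose cohomology is computable once we know the splitting type).

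Concretely, I would argue as follows. Since maximum rank is an open condition on $\ell \in (\PP^2)^*$ and is preserved under the twist that normalizes $\E$, it suffices to produce one $\ell$ for which $\times \ell^2$ has maximum rank in every degree, and then the set of bad $\ell$ is automatically a proper closed subset. To produce such an $\ell$, take $\ell$ general: then the cokernel and kernel of the restriction maps are governed by $\HH^0$ and $\HH^1$ of line bundles $\OO_{\PP^1}(a+t)$, $\OO_{\PP^1}(b+t)$ on $\PP^1$, which are nonzero in complementary ranges of $t$. Chasing the long exact sequence degree by degree, one checks that $\times \ell^2 : \HH^1(\PP^2,\E(t-1)) \to \HH^1(\PP^2,\E(t+1))$ is surjective for $t$ small and injective for $t$ large, with the crossover occurring at the expected spot dictated by the balanced splitting numbers; this is exactly the statement that the module has maximum rank with respect to $\ell^2$. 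The fact that $M$ has finite length forces $\HH^1(\PP^2,\E(t))=0$ for $t \ll 0$ and $t \gg 0$, which anchors the induction at both ends.

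For the ``moreover'' part, I need that the locus of $\ell$ failing the property is not just closed but a \emph{hypersurface}, i.e.\ pure codimension $1$ (equivalently nonempty of codimension $\le 1$, but also not all of $(\PP^2)^*$, and with no component of higher codimension — which for the scheme structure means it is cut out by a single equation). The structural input here is Proposition \ref{'mid}/Proposition \ref{matrix}: by the WLP for $M$ (proved in \cite{FFP21}, and in any case following from Proposition \ref{LC} itself together with the analogous chase for $\times \ell$), the non-Lefschetz condition for $\ell^2$ is concentrated in a single degree $i$, where the relevant multiplication is represented by an $h_{i+2} \times h_i$ matrix of quadrics in the coefficients of $\ell$ whose $h_i = h_{i+2}$ (the middle Hilbert function values are equal there, by unimodality and the symmetry of the situation at the crossover degree); the failure locus is then the vanishing of a single determinant, hence a hypersurface, provided it is not empty. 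Non-emptiness is the one point that genuinely uses geometry: a jumping line of $\E$ (which exists whenever $M \ne 0$, since otherwise $\E$ would split) changes the splitting numbers to an unbalanced pair, and for an unbalanced splitting the above cohomology chase shows $\times \ell^2$ drops rank in the crossover degree.

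The main obstacle I anticipate is the bookkeeping in the long exact sequence when $c_1(\E) = -1$: the two splitting summands then have different parities, the crossover degree for $\ell$ versus $\ell^2$ are offset, and one must be careful that ``maximum rank in every degree'' really does follow and isn't spoiled by a single middle degree where both $\HH^0$ and $\HH^1$ contributions overlap. I would handle this by treating $\ell^2$ directly via $0 \to \E(-2) \to \E \to \E_{|\ell^2} \to 0$ rather than composing two copies of the line restriction, so that only one splitting computation (that of $\HH^*(\ell^2, \E_{|\ell^2}(t))$) is needed, and the parity issue is absorbed into a single explicit formula for those cohomology dimensions in terms of $a$, $b$, and $t$.
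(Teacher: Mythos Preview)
Your plan for the existence claim (that a general $\ell$ gives $\times\ell^2$ of maximum rank) is essentially the paper's approach in the delicate case, namely semistable $c_1(\E)=-1$: there the paper also passes to the restriction sequence $0\to\E(-2)\to\E\to\E_{|\ell^2}\to 0$ and reads off the result from $\HH^0(\E_{|\ell^2})$. For the remaining cases ($\E$ unstable, or $c_1(\E)=0$) the paper takes a shortcut you do not: by \cite{FFP21} the map $\times\ell$ is \emph{bijective} in a middle range, so $\times\ell^2$ automatically has maximum rank everywhere, and the bad locus for $\ell^2$ coincides set-theoretically with the non-Lefschetz locus $\LL_M$ for lines. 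Your unified cohomology chase would also work here, but is more labor.

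There is, however, a genuine gap in your argument for the hypersurface claim. You assert that at the concentration degree $i$ one has $h_i=h_{i+2}$ ``by unimodality and symmetry,'' so that the failure locus is cut by a single determinant. This is correct when $c_1(\E)=-1$ (Serre duality gives $h_{-2}=h_0$) and when $\E$ is unstable (the flat middle range has length $\ge 3$). But for $\E$ \emph{semistable with $c_1(\E)=0$} the duality is around $-3/2$, so $h_{-2}=h_{-1}$ and $h_{-3}=h_0$, while the concentration degree for $\times\ell^2$ is $i=-3$; here $h_{-3}<h_{-1}$ in general, the matrix is genuinely rectangular, and its maximal minors cut out a locus of expected codimension $h_{-1}-h_{-3}+1\ge 2$, not a hypersurface. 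Your determinant argument simply does not apply in this case. The paper's fix is exactly the shortcut above: since $\times\ell$ is bijective in the middle, failure of $\times\ell^2$ is equivalent to failure of $\times\ell$, and the latter locus (the jumping lines) is a hypersurface by the results cited from \cite{vb}. You need to insert this reduction, or an equivalent one, for the $c_1=0$ semistable case.

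A minor point: your separate non-emptiness argument via existence of a jumping line is unnecessary. In the cases where your determinant argument does apply, the determinant is a nonconstant homogeneous polynomial in the three coordinates of $\ell$, so its zero locus in $(\PP^2)^*$ is automatically a nonempty curve once you know it is not identically zero.
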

\begin{proof}
    Without loss of generality, we can assume $\E$ is normalized. A general line $\ell$ is a Lefschetz element for $M$ by \cite{FFP21}. In fact, by \cite{FFP21} we have:
\begin{itemize}
    \item if $\E$ is semistable and $c_1(\E)=0$, then $\times \ell: [M]_{i-1}\to [M]_{i}$ is injective for $i<-1$, bijective for $i=-1$, and surjective for $i >-1$;
    \item if $\E$ is unstable with index of instability $k$ and $c_1(\E)=0$, then $k>0$ and $\times \ell$ is injective for $i<-(k+1)$, bijective for $-(k+1) \leq i \leq k-1$, and surjective for $i >k-1$;
    \item if $\E$ is unstable with index of instability $k$ and $c_1(\E)=-1$, then $k\geq 0$ and $\times \ell$ is injective for $i<-(k+1)$, bijective for $-k-1\leq i\leq k$, and surjective for $i >k$.
\end{itemize}
    
    \emph{Case 1: $\E$ is unstable or $c_1(\E)=0$.} In this case, the multiplication map in the middle degree is an isomorphism. 
Then the multiplication by $\ell^2:[M]_i\to [M]_{i+2}$ must always have maximum rank for each $i$.
This also shows that if $\E$ is unstable or if the first Chern class is even, the lines $\ell$ for which $\times \ell^2: [M]_i\to [M]_{i+2}$ fails to have maximum rank in some degree are exactly the non-Lefschetz elements of $M$. These are the jumping lines of $\E$ by \cite{vb}, which, in this case, form a hypersurface in $(\PP^2)^*$.

 \emph{Case 2: $\E$ is semistable with $c_1(\E)=-1$.}
In this case by \cite{FFP21} the multiplication map $\times \ell: [M]_{i-1}\to [M]_{i}$ by a general line $\ell$ is injective for $i\leq -1$ and surjective for $i\geq 0$. Therefore $\times \ell^2: [M]_{i}\to [M]_{i+2}$, when $\ell$ is a general linear form, is injective for $i<-2$ and surjective for $i\geq -1$. So it is enough to show that there exists $\ell\in [R]_1$ such that \begin{align*}
    \times \ell^2: \equalto{[M]_{-2}}{\HH_*^1(\PP^2, \E(-2))} \to \equalto{[M]_{0}}{\HH_*^1(\PP^2, \E)}
\end{align*} has maximum rank.
From the short exact sequence
    $$0\to\E(-2)\stackrel{\times \ell^2}{\to}\E\to\E_{| \ell^2}\to 0$$
we get the long exact sequence 
\begin{align*}
	0\to \HH^0(\PP^2,\E(-2))\to \HH^0(\PP^2,\E)\to \HH^0(\PP^2,\E_{| \ell^2})\to \HH^1(\PP^2,\E(-2))\stackrel{\times \ell^2}{\to} \HH^1(\PP^2,\E)\to \\
	\to \HH^1(\PP^2,\E_{| \ell^2})\to \HH^2(\PP^2,\E(-2))\to \HH^2(\PP^2,\E)\to \HH^2(\PP^2,\E_{| \ell^2})=0.
\end{align*}
Since $\E$ is semistable and $c_1(\E)=-1$, $\HH^0(\PP^2,\E)=0$. Using duality 
$$\HH^2(\PP^2,\E(-2))\cong \HH^0(\PP^2,\check{\E}(-1))\cong \HH^0(\PP^2,\E)=0$$ since $\check{\E}=\E(1)$ for a rank $2$ vector bundle with $c_1(\E)=-1$.
Then  the sequence above becomes 
\begin{align*}
	0\to  \HH^0(\PP^2,\E_{| \ell^2})\to \HH^1(\PP^2,\E(-2))\stackrel{\times \ell^2}{\to} \HH^1(\PP^2,\E)
	\to \HH^1(\PP^2,\E_{| \ell^2})\to 0.
\end{align*}
It is sufficient that $\HH^0(\PP^2,\E_{| \ell^2})=0$ to obtain that $\times \ell^2: [M]_{-2}\to [M]_{0}$ is injective, and so $\ell^2$ has maximum rank. 
This condition is also necessary. In fact, using duality again we have that $\HH^1(\PP^2,\E(-2))\cong \HH^1(\PP^2,\check{\E}(-1))\cong \HH^1(\PP^2,\E)$, hence $\times \ell^2: [M]_{-2}\to [M]_{0}$ has maximum rank if and only if it is an isomorphism.

So we conclude that $\times \ell^2: [M]_i\to [M]_{i+2}$ fails to have maximum rank if and only if $\HH^0(\PP^2,\E_{| \ell^2})\neq0$, i.e. if and only if $\ell$ is a jumping line of $\E$ of the second kind (Definition \ref{secondtype}). By Theorem \ref{secondjump} these form a hypersurface in $(\PP^2)^*$.
 In particular, the complement is not empty, so for a general line $\ell$ the multiplication map $\times \ell^2:[M]_i\to [M]_{i+2}$ has maximum rank in each degree.
\end{proof}

\begin{cor}\label{SLP2}
  Any Artinian complete intersection $A=R/(f_1,f_2,f_3)$,  has the Strong Lefschetz Property at range $2$, i.e. there exists a linear form $\ell\in [R]_1$ such that multiplication map $\times \ell^2: [A]_i\to [A]_{i+2}$ has maximum rank in each degree. 
\end{cor}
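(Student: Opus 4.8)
The plan is to deduce Corollary~\ref{SLP2} from Proposition~\ref{LC} by realizing an Artinian complete intersection $A=R/(f_1,f_2,f_3)$ as (a shift of) the first cohomology module of a rank $2$ vector bundle on $\PP^2$. Concretely, I would take the Koszul-type syzygy sheaf: the three forms $f_1,f_2,f_3$ define a surjection $\bigoplus_{i=1}^3\OO_{\PP^2}(-d_i)\twoheadrightarrow\OO_{\PP^2}$ whose kernel $\E$ is a rank $2$ vector bundle (the first syzygy bundle), giving the exact sequence $0\to\E\to\bigoplus_{i=1}^3\OO_{\PP^2}(-d_i)\to\OO_{\PP^2}\to 0$. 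Applying $\Gamma_*$ and using that $\HH^0_*(\PP^2,\OO)=R$ while $\HH^1_*(\PP^2,\OO_{\PP^2}(-d_i))=0$, the long exact sequence identifies $\HH^1_*(\PP^2,\E)$ with the cokernel of $R(-d_1)\oplus R(-d_2)\oplus R(-d_3)\to R$, i.e. with $A$ up to a degree shift. Thus $A\cong\HH^1_*(\PP^2,\E)$ as graded $R$-modules, after the standard normalization shift of $\E$.

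Once this identification is in place, the corollary is immediate: by Proposition~\ref{LC} there is a general linear form $\ell$ such that $\times\ell^2:[\HH^1_*(\PP^2,\E)]_i\to[\HH^1_*(\PP^2,\E)]_{i+2}$ has maximum rank in every degree, and since a degree shift does not affect the ranks of the multiplication maps by $\ell^2$, the same $\ell$ works for $A$. Hence $A$ has the Strong Lefschetz Property at range $2$. I would also remark, matching the surrounding text, that the complete intersection case falls into ``Case~1'' of Proposition~\ref{LC}: the syzygy bundle is semistable (this is essentially the Grauert--M\"ulich/Hoppe argument already invoked in \cite{13} for the WLP of height~$3$ complete intersections), and if $c_1(\E)$ is odd the lines failing the $\ell^2$-property are jumping lines, while if $c_1(\E)$ is even the middle multiplication map is an isomorphism and again the bad lines form a hypersurface; in either case they form a proper closed subset of $(\PP^2)^*$.

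The only genuine point requiring care — the ``main obstacle'', though it is modest — is checking that the syzygy module of $(f_1,f_2,f_3)$ is exactly $\HH^0_*(\PP^2,\E)=\Gamma_*(\PP^2,\E)$ and not a strictly smaller module, equivalently that $\E$ has no intermediate cohomology issues obstructing the identification $A\cong\HH^1_*(\PP^2,\E)$; this follows because $R(-d_1)\oplus R(-d_2)\oplus R(-d_3)\to R$ is surjective in all large degrees with Artinian cokernel, so the associated sheaf sequence has the displayed form and $\Gamma_*$ applied to it yields precisely the four-term exact sequence $0\to\Gamma_*(\PP^2,\E)\to\bigoplus R(-d_i)\to R\to A\to 0$ (using $\HH^1_*(\PP^2,\OO_{\PP^2}(-d_i))=0$ on $\PP^2$). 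With that in hand the proof is a one-line reduction to Proposition~\ref{LC}.
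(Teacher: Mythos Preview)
Your core argument is correct and matches the paper's proof exactly: identify $A$ with $\HH^1_*(\PP^2,\E)$ for $\E$ the syzygy bundle, then invoke Proposition~\ref{LC}. Your extra care in justifying the isomorphism $A\cong\HH^1_*(\PP^2,\E)$ via the long exact sequence is fine and more explicit than the paper, which simply asserts it.

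One factual slip in your side remark: the syzygy bundle of a height~$3$ complete intersection is \emph{not} always semistable. As noted in \S6 of the paper, $\E$ is unstable precisely when $d_3>d_1+d_2$, semistable but not stable when $d_3=d_1+d_2$, and stable when $d_3<d_1+d_2$. So the complete intersection case does not always fall into a single case of Proposition~\ref{LC}; rather, all three stability types occur, and both parities of $c_1(\E)$ arise (depending on the parity of the socle degree). You also have the case descriptions reversed: in the paper's Case~1 ($\E$ unstable or $c_1(\E)=0$) the bad lines are jumping lines, while in Case~2 ($\E$ semistable with $c_1(\E)=-1$) they are jumping lines of the second kind. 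None of this affects the validity of your proof, since Proposition~\ref{LC} covers all cases uniformly, but the remark as written is inaccurate.
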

\begin{proof}
    The first syzygy bundle $\E$ of $A$ is a rank $2$ vector bundle over $\PP^2$ and $A\cong \HH_*^1(\PP^2,\E)$, so we can apply Proposition \ref{LC}.
\end{proof}

\begin{rmk}\label{singular}
    The proof of Proposition \ref{LC} shows that if $\E$ is unstable or if the first Chern class of $\E$ is even, then $\times \ell^2: [M]_i\to [M]_{i+2}$ fails to have maximum rank in some degree if and only if $\ell$ is a jumping line. The same proof shows that $\times \ell_1\ell_2: [M]_i\to [M]_{i+2}$ fails to have maximum rank in some degree if and only if at least one of $\ell_1$ and $\ell_2$ is a jumping line. The same is not true for the case when $\E$ is semistable with the first Chern class odd. 
\end{rmk}

\section{Jumping conics and non-Lefschetz conics}

In this section, we want to classify the elements of the non-Lefschetz locus of conics of $M$, the first cohomology module of a rank $2$ vector bundle over $\PP^2$. In the case of lines, the non-Lefschetz elements are exactly the jumping lines by \cite{vb}. Hence, the first question that we want to address is whether the elements of the non-Lefschetz locus of conics are exactly the jumping conics. Unfortunately, this is not always the case. We will see that every non-Lefschetz conic is a jumping conic, but the converse is not always true. 

We first recall the notion of a jumping conic for a semistable vector bundle $\E$ of rank $2$ over $\PP^2$, introduced by Vitter in \cite{Vitter}.
We can assume that $\E$ is normalized. If $C$ is a smooth conic, then $\E$ splits over $C$ as
$$\E_{|C}\cong\begin{cases}
		\OO_{\PP^1}(a)\oplus\OO_{\PP^1}(-a)  & \text{ if }  c_1(\En)=0; \\
		\OO_{\PP^1}(a)\oplus\OO_{\PP^1}(-a-2) &\text{ if }  c_1(\En)=-1.
	\end{cases} $$

Vitter in \cite{Vitter} generalizes the Grauert--M\"{u}lich Theorem, and proves the following:
\begin{teo}{\emph{(}\cite[Corollary 1]{Vitter}\emph{)}} 
	Let $\E$ be a semistable normalized rank $2$ vector bundle on $\PP^2$ and let $C$ be a general smooth conic. Then
	$$\E_{|C}\cong\begin{cases}
		\OO_{\PP^1}\oplus\OO_{\PP^1}  & \text{ if }  c_1(\E)=0; \\
		\OO_{\PP^1}(-1)\oplus\OO_{\PP^1}(-1) &\text{ if }  c_1(\E)=-1.
	\end{cases} $$
\end{teo}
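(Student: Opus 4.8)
The plan is to reduce to the Grauert--M\"ulich theorem for lines (stated above) by degenerating a general smooth conic to a pair of general lines, and to read off the splitting type from the endomorphism sheaf.

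A smooth conic $C\subset\PP^2$ is abstractly a $\PP^1$ with $\OO_{\PP^2}(1)|_{C}\cong\OO_{\PP^1}(2)$, so $\deg(\E_{|C})=2\,c_1(\E)$; writing $\E_{|C}\cong\OO_{\PP^1}(a)\oplus\OO_{\PP^1}(b)$ with $a\geq b$ we get $a+b=2c_1(\E)$, in particular $a-b$ is even. The assertion is therefore that $|a-b|=0$ for a general $C$. I would detect this through the identity
$$\HH^0\!\big(C,\mathcal{E}nd(\E_{|C})\big)=\max\{4,\,|a-b|+3\},$$
which comes from $\mathcal{E}nd\big(\OO(a)\oplus\OO(b)\big)\cong\OO^{2}\oplus\OO(a-b)\oplus\OO(b-a)$ on $\PP^1$. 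Hence $\HH^0(\mathcal{E}nd(\E_{|C}))\leq4$ forces $|a-b|\leq1$, and parity then forces $|a-b|=0$, i.e. $\E_{|C}\cong\OO_{\PP^1}(c_1(\E))^{\oplus2}$. So it suffices to exhibit one conic $[C_0]\in\PP^5$ with $\HH^0(\mathcal{E}nd(\E)_{|C_0})\leq4$ and then invoke upper semicontinuity of $[C]\mapsto\HH^0(\mathcal{E}nd(\E)_{|C})$ along the (flat) universal conic over $\PP^5$, together with the density of smooth conics, to get the same bound, hence balancedness, for a general smooth conic.

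For the test object take $C_0=\ell_1\cup\ell_2$ with $\ell_1,\ell_2$ general lines meeting at $p$; this is the conic cut out by the degree-two form $f_1f_2$, a point of $\PP^5$. By Grauert--M\"ulich each $\ell_i$ is a non-jumping line, so $\E_{|\ell_i}$ is $\OO_{\PP^1}^{\oplus2}$ if $c_1(\E)=0$ and $\OO_{\PP^1}(-1)\oplus\OO_{\PP^1}$ if $c_1(\E)=-1$; correspondingly $\mathcal{E}nd(\E)_{|\ell_i}$ is $\OO^{\oplus4}$, resp.\ $\OO^{\oplus2}\oplus\OO(1)\oplus\OO(-1)$, so $\HH^0(\mathcal{E}nd(\E)_{|\ell_i})=4$ and $\HH^1(\mathcal{E}nd(\E)_{|\ell_i})=0$ in both cases. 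Tensoring the normalization sequence $0\to\OO_{C_0}\to\OO_{\ell_1}\oplus\OO_{\ell_2}\to\OO_p\to0$ with the locally free sheaf $\mathcal{E}nd(\E)$ and taking cohomology,
$$0\to\HH^0(\mathcal{E}nd(\E)_{|C_0})\to\HH^0(\mathcal{E}nd(\E)_{|\ell_1})\oplus\HH^0(\mathcal{E}nd(\E)_{|\ell_2})\xrightarrow{\ \mathrm{ev}^1_p-\mathrm{ev}^2_p\ }\mathcal{E}nd(\E)_p\to\HH^1(\mathcal{E}nd(\E)_{|C_0})\to0,$$
one gets $\HH^0(\mathcal{E}nd(\E)_{|C_0})=4$ as soon as $\mathrm{ev}^1_p-\mathrm{ev}^2_p$ is surjective. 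When $c_1(\E)=0$ each $\mathrm{ev}^i_p$ is an isomorphism $k^4\xrightarrow{\sim}\mathcal{E}nd(\E)_p$, so this is automatic. When $c_1(\E)=-1$, $\mathrm{im}(\mathrm{ev}^i_p)$ is the $3$-dimensional space of endomorphisms of $\E_p$ stabilizing the fibre $L_{\ell_i}\subset\E_p$ of the unique degree-$0$ sub-line-bundle of $\E_{|\ell_i}$, and surjectivity holds precisely when $L_{\ell_1}\neq L_{\ell_2}$.

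The main obstacle is this last genericity input: two general lines must yield distinct sub-line-bundle fibres $L_{\ell_1},L_{\ell_2}$ in $\E_{\ell_1\cap\ell_2}$. I would argue by contradiction: if $\ell\mapsto L_\ell$ were constant on the pencil of lines through a point $p$, these sub-line-bundles would glue on $\mathrm{Bl}_p\PP^2$ to a saturated sub-line-bundle of the pullback of $\E$ which descends to a sub-line-bundle $\OO_{\PP^2}(a')\hookrightarrow\E$ with $a'\geq0$, whence $\HH^0(\PP^2,\E)\neq0$; this contradicts stability of $\E$ (Lemma~\ref{stab}) when $c_1(\E)=-1$. The remaining ingredients---flatness of the universal conic over $\PP^5$, semicontinuity, and the cohomology bookkeeping on $\PP^1$---are routine. (One could instead run Vitter's original argument, generalizing the Grauert--M\"ulich ``standard construction'' to the universal conic $Z\subset\PP^2\times\PP^5$, using $\mathrm{Pic}(Z)=\Z\,h_1\oplus\Z\,h_2$ and the two projections to pin down the bidegree of a putative maximal destabilizing subsheaf; the degeneration above seems more economical.)
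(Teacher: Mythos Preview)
The paper does not prove this statement; it is cited from \cite{Vitter} without argument, so there is no in-paper proof to compare against. Your degeneration-to-a-reducible-conic argument is correct and is different from Vitter's original approach (which, as you note parenthetically, runs the Grauert--M\"ulich standard construction directly on the universal conic over $\PP^5$). Your route is more elementary: it reduces everything to the line case already stated in the paper plus a single semicontinuity step, and the bookkeeping via $h^0(\mathcal{E}nd(\E)|_C)=\max\{4,|a-b|+3\}$ together with the parity of $a-b$ is clean.

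The one place to tighten is the $c_1(\E)=-1$ genericity claim. What you need is that for a \emph{general} pair $(\ell_1,\ell_2)$ the fibres $L_{\ell_1},L_{\ell_2}\subset\E_p$ differ at $p=\ell_1\cap\ell_2$. The negation forces the assignment $\ell\mapsto L_\ell(p)\in\PP(\E_p)$ to be constant on the pencil through a \emph{general} point $p$ (not merely through one fixed $p$); that yields a rational section $\PP^2\dashrightarrow\PP(\E)$, hence after extension a sub-line-bundle $L\subset\E$ with $L|_\ell\cong\OO_\ell$ on a general line, so $L\cong\OO_{\PP^2}$ and $\HH^0(\PP^2,\E)\neq0$, contradicting Lemma~\ref{stab}. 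Your blow-up formulation encodes the same idea, but make explicit that constancy holds for the general $p$, so that the resulting section is defined on a dense open set of $\PP^2$ before you descend.
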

Hence a smooth jumping conic for a semistable vector $\E$ bundle is defined as a smooth conic for which the splitting type differs from the one described in the previous result.
\begin{df}[\cite{Vitter}]\label{splitstable}
	A smooth conic $C$ is a jumping conic for a semistable vector bundle $\E$  if 
	$$\E_{|C}\cong\begin{cases}
		\OO_{\PP^1}(a)\oplus\OO_{\PP^1}(-a)  & \text{ if }  c_1(\En)=0; \\
		\OO_{\PP^1}(a-1)\oplus\OO_{\PP^1}(-a-1) &\text{ if }  c_1(\En)=-1
	\end{cases} $$
	with $a>0$.
\end{df}

To extend this definition also for unstable vector bundles, first we need to study how an unstable vector bundle $\E$ splits when restricted to a general smooth conic.

 \begin{prop}
 	Let $\E$ be an unstable normalized rank $2$ vector bundle on $\PP^2$, with instability index $k$, and let $C$ be a general smooth conic. Then
 	$$\E_{|C}\cong\begin{cases}
 		\OO_{\PP^1}(2k)\oplus\OO_{\PP^1}(2k)  & \text{ if }  c_1(\E)=0; \\
 		\OO_{\PP^1}(2k)\oplus\OO_{\PP^1}(-2k-2) &\text{ if }  c_1(\E)=-1.
 	\end{cases} $$
 \end{prop}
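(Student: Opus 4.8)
The plan is to realize $\E$ as an extension whose sub-line-bundle is the maximal destabilizing one, restrict that extension to a general smooth conic, and check that the resulting sequence of line bundles on $\PP^1$ splits for purely numerical reasons.

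First I would produce the destabilizing sequence. Since $\E$ is unstable with instability index $k$, there is a nonzero section $s\in\HH^0(\PP^2,\E(-k))$, i.e.\ an injection $\OO_{\PP^2}\hookrightarrow\E(-k)$. Its zero scheme has no divisorial component: if a divisor $D$ of degree $\geq 1$ were contained in it, then $s$ would factor through $\OO_{\PP^2}(D)$, yielding a nonzero section of $\E(-k-\deg D)$ and contradicting the maximality of $k$. Hence $s$ vanishes on a (possibly empty) zero-dimensional subscheme $W$, its cokernel is torsion-free of rank one, and a comparison of first Chern classes identifies that cokernel as $\mathcal{I}_W(c_1-2k)$, where $c_1=c_1(\E)\in\{0,-1\}$. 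Twisting by $\OO_{\PP^2}(k)$ gives the exact sequence
$$0\to \OO_{\PP^2}(k)\to \E\to \mathcal{I}_W(c_1-k)\to 0.$$

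Next I would restrict to a general smooth conic $C$. Since $W$ is finite and the conics through a fixed point form a hyperplane in $\PP^5$, a general smooth conic avoids $W$; then $\mathcal{I}_W$ is trivial in a neighborhood of $C$, so tensoring the displayed sequence with $\OO_C$ stays exact on the left. Using $\OO_{\PP^2}(t)|_C\cong\OO_{\PP^1}(2t)$ (as $C$ has degree $2$) and $\mathcal{I}_W|_C\cong\OO_C$, this yields
$$0\to \OO_{\PP^1}(2k)\to \E|_C\to \OO_{\PP^1}\big(2(c_1-k)\big)\to 0.$$
This extension is classified by $\Ext^1_{\PP^1}\big(\OO_{\PP^1}(2(c_1-k)),\OO_{\PP^1}(2k)\big)\cong \HH^1\big(\PP^1,\OO_{\PP^1}(4k-2c_1)\big)$, which vanishes since $4k-2c_1\geq -1$: indeed $k\geq 1$ when $c_1=0$ and $k\geq 0$ when $c_1=-1$. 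Therefore $\E|_C\cong \OO_{\PP^1}(2k)\oplus\OO_{\PP^1}(2(c_1-k))$, i.e.\ $\OO_{\PP^1}(2k)\oplus\OO_{\PP^1}(-2k)$ for $c_1=0$ and $\OO_{\PP^1}(2k)\oplus\OO_{\PP^1}(-2k-2)$ for $c_1=-1$ (note $\deg\E|_C=2c_1$, so in the case $c_1=0$ the second summand must indeed be $\OO_{\PP^1}(-2k)$), which is the asserted splitting type.

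The main obstacle is the first step: correctly setting up the destabilizing extension, and in particular verifying that $s$ cuts out only a codimension-$2$ locus — this is exactly where the definition of the instability index is used. Everything after that is formal, being just the left-exactness of restriction away from $W$ and the vanishing of an $\Ext^1$ between line bundles on $\PP^1$.
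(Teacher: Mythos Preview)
Your proof is correct and follows essentially the same approach as the paper: both construct the destabilizing sequence from a nonzero section of $\E(-k)$, restrict to a general smooth conic avoiding the section's (zero-dimensional) vanishing locus, and then verify that the resulting extension of line bundles on $\PP^1$ splits. The only cosmetic difference is that the paper determines the degree of the quotient line bundle on $C$ via an Euler-characteristic computation rather than directly via Chern classes as you do; you also correctly flag the typo in the displayed statement (for $c_1=0$ the second summand should be $\OO_{\PP^1}(-2k)$, not $\OO_{\PP^1}(2k)$).
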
 
 \begin{proof}
 	Let $\E$ be an unstable normalized rank $2$ vector bundle on $\PP^2$, with instability index $k$, and let $C$ be a general smooth conic.
 	We know that $h^0(\PP^2,\E(-k))\neq 0$, and any non-zero section $s\in \HH^0(\PP^2,\E(-k))$ is regular, so its vanishing locus has codimension at least $2$. Then a
 	 non-zero section $s\in \HH^0(\PP^2,\E(-k))$ gives us the exact sequence
 	$$0\to \OO_{\PP^2}\to \E(-k)\to \mathcal{I}\to 0$$
 	 where $\mathcal{I}$ is the ideal sheaf of a set of points. Then shifting by $k$ we get the sequence
 	$$0\to \OO_{\PP^2}(k)\to \E\to \mathcal{Q}\to 0$$
 	where we define $\mathcal{Q}=\mathcal{I}(k)$.
 	Since $C$ is general, we can assume $C$ does not meet the zero locus of $s$, therefore restricting the previous sequence to $C$, we have the exact sequence
 	$$0\to \OO_{\PP^2|C}(k)\to \E_{|C}\to \mathcal{Q}_{|C}\to 0.$$ 
 	$C$ is a smooth conic so $\OO_{\PP^2|C}(k)\cong \OO_{\PP^1}(2k)$ and $\mathcal{Q}_{|C}$ is a line bundle over $C\cong \PP^1$, so there exists an $\ell$ such that $\mathcal{Q}_{|C}\cong \OO_{\PP^1}(\ell)$.
 	Then our sequence becomes 
 	$$0\to \OO_{\PP^1}(2k)\to \E_{|C}\to \OO_{\PP^1}(\ell)\to 0.$$
 	We can use this sequence to compute the Euler characteristic of $\E_{|C}$
 	$$\chi(\E_{|C})=\chi(\OO_{\PP^1}(2k))+\chi(\OO_{\PP^1}(2k))=2k+l+2.$$
 	Comparing with
 	$$\chi(\E_{|C})=\begin{cases}
 		2  & \text{ if }  c_1(\En)=0; \\
 		0 &\text{ if }  c_1(\En)=-1,
 	\end{cases} $$
 	we obtain
 	$$\ell=\begin{cases}
 		-2k  & \text{ if }  c_1(\En)=0; \\
 		-2k-2 &\text{ if }  c_1(\En)=-1.
 	\end{cases} $$
 	In both cases, the sequence 
 	$$0\to \OO_{\PP^1}(2k)\to \E_{|C}\to \OO_{\PP^1}(\ell)\to 0$$
 	is a split exact sequence since $\Ext(\OO_{\PP^1}(2k),\OO_{\PP^1}(\ell))=0$, so
 	$$\E_{|C}\cong\begin{cases}
 		\OO_{\PP^1}(2k)\oplus\OO_{\PP^1}(2k)  & \text{ if }  c_1(\En)=0; \\
 		\OO_{\PP^1}(2k)\oplus\OO_{\PP^1}(-2k-2) &\text{ if }  c_1(\En)=-1.
 	\end{cases} $$
 \end{proof}
 We extend Definition \ref{splitstable} for any rank $2$ vector bundle on $\PP^2$ as follows:
 \begin{df}
 	A smooth \emph{jumping conic} for $\E$ is a smooth conic $C$ such that the restriction of $\E$ to $C$ does not split as it does for a general conic.
 \end{df}

Until now, we have considered only smooth conics. The next question is how to extend the definition of jumping conic to include also the singular case. When $\E$ is semistable, we recall the definition in \cite{Vitter}:
\begin{df}[\cite{Vitter}]\label{defsing}
Let $\E$ be a rank $2$ normalized semistable bundle. A singular conic $C=\ell_1\ell_2$ is a jumping conic if 
\begin{itemize}
    \item $h^0(\C;\E_{|C}) > 0$ when $c_1(\E)=-1$;
    \item if either $\ell_1$ or $\ell_2$ is a jumping line when $c_1(\E)=0$.
\end{itemize} 
\end{df}
\begin{rmk}
    In the case $c_1(\E)=-1$, this definition agrees with the one for smooth jumping conics; in fact, we can say that a conic $C$ is a jumping conic if and only if $h^0(\C;\E_{|C}) > 0$.
\end{rmk}
Vitter also shows that for singular conics Definition \ref{defsing} is equivalent to the following:
\begin{df} $C=\ell_1\ell_2$  is a jumping conic for a semistable normalized vector bundle $\E$
exactly when one of the following is true:
\begin{itemize}
	\item $\ell_1$ or $\ell_2$ is a jumping line;
	\item $\C = \ell^2$ and $\ell$ is a jumping line
	of the second kind (Definition \ref{secondtype});
	\item $\ell_1$ and $\ell_2$ are generic so that $\E_{|\ell_j}=\OO_{\PP^1}\oplus \OO_{\PP^1}(-1)$ for
	$j = 1, 2$ and the $\OO_{\PP^1}$ summands coincide at the intersection point.
\end{itemize} 
\end{df}
We can extend this definition to the case when $\E$ is unstable in the following natural way:
\begin{df}
Let $\E$ be a rank $2$ normalized unstable vector bundle. A singular conic $C=\ell_1\ell_2$ is a jumping conic if $\ell_1$ or $\ell_2$ is a jumping line.
\end{df}

To connect the notion of jumping conic with the non-Lefschetz conics let us first state some facts that we will use in the next proof. Recall that a conic $\C$ is a  Lefschetz conic for $M=\HH_*^1(\PP^2, \E)$ if and only if \begin{align*}
    \times C: \equalto{[M]_i }{\HH_*^1(\PP^2, \E(i))} \to \equalto{[M]_{i+2}}{\HH_*^1(\PP^2, \E(i+2))}
\end{align*} has maximum rank for each $i$. Since all these properties are independent of the shift, we can always assume $\E$ is normalized.

With a similar argument to the one in the proof of Theorem \ref{LC}, from the short exact sequence
$$0\to\E(i-2)\to\E(i)\to\E(i)_{| C}\to0.$$
we have the long exact sequence 
\begin{align*}
	0&\to \HH^0(\PP^2,\E(i-2))\to \HH^0(\PP^2,\E(i))\to \HH^0(\PP^2,\E(i)_{| C})\to\\
 &\to \HH^1(\PP^2,\E(i-2))\stackrel{\times  C}{\to} \HH^1(\PP^2,\E(i))
	\to \HH^1(\PP^2,\E(i)_{| C})\to \\&\to \HH^2(\PP^2,\E(i-2))\to \HH^2(\PP^2,\E(i))\to \HH^2(\PP^2,\E(i)_{| C})=0.
\end{align*}\label{seq}
Then we have the following facts:
\begin{enumerate}
	\item \label{'A1} $\times  C$ is injective if $h^0(\PP^2,\E(i)_{| C})=0$ (sufficient condition but not necessary);
	\item \label{'A2} $\times  C$ is injective if and only if $h^0(\PP^2,\E(i)_{| C})- h^0(\PP^2,\E(i))+h^0(\PP^2,\E(i-2))=0$;
	\item \label{'A3} $\times  C$ is injective if and only if the map $  \HH^0(\PP^2,\E(i))\to \HH^0(\PP^2,\E(i)_{| C})$ is surjective;
	\item \label{'B1} $\times  C$ is surjective if $h^1(\PP^2,\E(i)_{| C})=0$ (sufficient condition but not necessary);
	\item \label{'B2} $\times  C$ is surjective if and only if $h^1(\PP^2,\E(i)_{| C})- h^2(\PP^2,\E(i-2))+h^2(\PP^2,\E(i))=0$;
	\item \label{'B3} $\times  C$ is surjective if and only if the map $  \HH^1(\PP^2,\E(i)_{| C})\to \HH^2(\PP^2,\E(i-2))$ is injective.
\end{enumerate}
From this point forward we treat the cases of $\E$ unstable and $\E$ stable separately.

\begin{prop}\label{jum1}
	Let $\E$ be an unstable vector bundle. The conic $C$ is not a Lefschetz conic if and only if it is a jumping conic.
\end{prop}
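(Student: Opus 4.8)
The plan is to exploit the long exact sequence displayed just before the statement, together with the description of how an unstable normalized bundle $\E$ behaves on a general line and on a general conic. Since $\E$ is unstable with instability index $k$, the proposition just proved tells us that on a general smooth conic $C$ one has $\E_{|C}\cong\OO_{\PP^1}(2k)\oplus\OO_{\PP^1}(2k)$ (if $c_1(\E)=0$) or $\OO_{\PP^1}(2k)\oplus\OO_{\PP^1}(-2k-2)$ (if $c_1(\E)=-1$), and for a general line $\ell$ one has $\E_{|\ell}\cong\OO_{\PP^1}(-k)\oplus\OO_{\PP^1}(k)$ or $\OO_{\PP^1}(-k-1)\oplus\OO_{\PP^1}(k)$ by Theorem \ref{GM}. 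The idea is to compute, using $\times\ell$ in place of $\times C$ in the argument of Proposition \ref{LC}, the exact degrees in which $\times C$ on $M=\HH^1_*(\PP^2,\E)$ is bijective for a general conic $C$, and show that $\times C$ is always bijective in the ``middle'' (the range where $h^i$ on the module is constant), so that failure of maximum rank in some degree is detected entirely by the cohomology $\HH^0(\PP^2,\E(i)_{|C})$ and $\HH^1(\PP^2,\E(i)_{|C})$ of the restriction, i.e. by the splitting type of $\E$ on $C$.

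Concretely, I would proceed as follows. First, for a smooth conic $C$, use facts \eqref{'A2} and \eqref{'B2} to show that $\times C:[M]_i\to[M]_{i+2}$ fails to have maximum rank in some degree $i$ if and only if the splitting type of $\E_{|C}$ differs from the generic one — unwinding this: if $C$ is a general conic, then $\E_{|C}$ is the generic splitting type, the numbers $h^0(\E(i)_{|C})$ and $h^1(\E(i)_{|C})$ take their generic values in every degree, and one checks directly (comparing with the values of $h^0(\E(i))$, $h^2(\E(i))$ forced by the instability sequence $0\to\OO_{\PP^2}(k)\to\E\to\mathcal{Q}\to0$ used in the previous proposition, and with the constancy of $\dim[M]_i$ in the middle range coming from Proposition \ref{LC}) that conditions \eqref{'A2} and \eqref{'B2} hold in every degree, so a general conic is Lefschetz. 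Conversely, if $C$ is a jumping conic, the splitting type jumps, say $\E_{|C}\cong\OO_{\PP^1}(a')\oplus\OO_{\PP^1}(b')$ with $a'>b'$ more spread out than the generic pair; then for a suitable intermediate twist $i$ the value of $h^0(\E(i)_{|C})$ exceeds its generic value while $h^0(\E(i))$, $h^0(\E(i-2))$ are unchanged, forcing \eqref{'A2} to fail, i.e. $\times C$ not injective in that degree, and a dual computation with $h^1$ shows it is also not surjective there — hence not maximum rank. Second, handle the singular case $C=\ell_1\ell_2$: by Remark \ref{singular}, $\times\ell_1\ell_2$ fails to have maximum rank in some degree if and only if $\ell_1$ or $\ell_2$ is a jumping line, which is exactly the definition of a singular jumping conic in the unstable case.

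The main obstacle I expect is the bookkeeping in the smooth case: one must pin down, as functions of $i$ and $k$, the generic values of $h^0(\PP^2,\E(i)_{|C})$, $h^1(\PP^2,\E(i)_{|C})$, $h^0(\PP^2,\E(i))$, $h^2(\PP^2,\E(i))$, verify that $0\to\E(i-2)\to\E(i)\to\E(i)_{|C}\to 0$ forces equality in \eqref{'A2}/\eqref{'B2} for all $i$ when $C$ is general, and then locate a specific twist $i$ where a jump in the splitting type of $\E_{|C}$ breaks this equality in \emph{both} the injectivity and surjectivity conditions (so that ``maximum rank'' genuinely fails, not merely injectivity). This is where the hypothesis that $\E$ is unstable is essential: the instability section gives the vanishing $\HH^0(\PP^2,\E(-k-1))=0$ and the structure sequence that make the middle-degree multiplication on $M$ an isomorphism, exactly as in Case 1 of Proposition \ref{LC}, which is what allows the reduction of the Lefschetz question to the splitting type of $\E$ on $C$. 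Once the numerology is set up, the equivalence follows by comparing dimensions term by term in the long exact sequence.
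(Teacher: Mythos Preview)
Your approach is essentially the paper's: singular conics via Remark \ref{singular}, smooth conics via Facts \eqref{'A2} and \eqref{'B2} with explicit dimension counts for $h^0(\E(i)_{|C})$, $h^0(\E(i))$, etc.

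There is one genuine gap. You assert that when $C$ is a jumping conic the splitting type is ``more spread out'' than the generic pair, and then look for a twist where \eqref{'A2} fails. But a priori ``splitting type different from the generic one'' allows $a<2k$ just as well as $a>2k$, and in the former case your argument would not go through (indeed, for $a<2k$ the same computation would show $\times C$ has \emph{maximum} rank everywhere, so such a $C$ would be Lefschetz despite being ``jumping''). The paper closes this by first proving that $a\ge 2k$ for \emph{every} smooth conic $C$: from the short exact sequence $0\to\E(-k-2)\to\E(-k)\to\E(-k)_{|C}\to0$ and the vanishing $\HH^0(\PP^2,\E(-k-2))=0$ (definition of the instability index) one gets an injection $\HH^0(\PP^2,\E(-k))\hookrightarrow\HH^0(\PP^2,\E(-k)_{|C})$, and since $\HH^0(\PP^2,\E(-k))\neq0$ this forces $\HH^0(\PP^2,\E(-k)_{|C})\neq0$, i.e.\ $a-2k\ge0$. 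You mention the vanishing $\HH^0(\E(-k-1))=0$ in passing, but you never extract this inequality from it, and without it the implication ``jumping $\Rightarrow$ non-Lefschetz'' is not established. Once $a\ge 2k$ is in hand, the paper then writes out the quantity in \eqref{'A2} explicitly (using \cite[Proposition~3.6--3.7]{FFP21} for $h^0(\E(i))$) and sees it equals $a-2k$ on the nonempty range $-k\le i<k$ (or $-k\le i\le k$ when $c_1=-1$), so it vanishes iff $a=2k$; the surjectivity side is symmetric. Your sketch is correct in spirit but needs this lemma made explicit.
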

\begin{proof} As we saw in Remark \ref{singular}  a singular conic $C=\ell_1\ell_2$ is a non-Lefschetz conic if and only if at least one between $\ell_1$ or $\ell_2$ is a jumping line, i.e. if and only if $C$ is a jumping conic by definition.
Assume $C$ is a smooth conic and, without loss of generality, $\E$ is normalized. 
	
	For $\E$ unstable with instability index $k$ we know that 
	$$\E_{| C}\cong\begin{cases}
		\OO_{\PP^1}(a)\oplus\OO_{\PP^1}(-a)  & \text{ if }  c_1(\E)=0; \\
		\OO_{\PP^1}(a)\oplus\OO_{\PP^1}(-a-2) &\text{ if }  c_1(\E)=-1,
	\end{cases} $$ and $ C$ is a jumping conic if and only if $a\neq 2k$. 

	Let us first prove that for any smooth conic $ C$, we must have $a\geq 2k$.
	From the short exact sequence 
	$$0\to\E(-k-2)\to\E(-k)\to\E(-k)_{| C}\to0,$$
	we  have the cohomology sequence 
	$$0\to \HH^0(\PP^2,\E(-k-2))\to \HH^0(\PP^2,\E(-k))\to \HH^0(\PP^2,\E(-k)_{| C})\to\dots$$
	By definition of instability index we have that $\HH^0(\PP^2,\E(-k))\neq0$ and, for any $b>k$, $\HH^0(\PP^2,\E(-b))=0$. In particular $\HH^0(\PP^2,\E(-k-2))=0$, so the map $\HH^0(\PP^2,\E(-k))\to \HH^0(\PP^2,\E(-k)_{| C})$ is injective.
	It follows that  $\HH^0(\PP^2,\E(-k)_{|C})\neq 0$. Since
	$$\HH^0(\PP^2,\E(-k)_{| C})=\begin{cases}
		\HH^0(\PP^1,\OO_{\PP^1}(-2k+a)\oplus\OO_{\PP^1}(-2k-a) ) & \text{ if }  c_1(\E)=0; \\
		\HH^0(\PP^1,\OO_{\PP^1}(-2k+a)\oplus\OO_{\PP^1}(-2k-a-2)) &\text{ if }  c_1(\E)=-1,
	\end{cases} $$
	we can conclude that $a\geq 2k$. 
	
	\emph{Case 1: $c_1(\E)=0$.} 
	In this case $k>0$ and $\E^\vee=\E$.
	Using \cite[Proposition 3.6]{FFP21}  we have for $i<k$
	\begin{align*}
		&h^0(\PP^2,\E(i)_{|\ell})- h^0(\PP^2,\E(i))+h^0(\PP^2,\E(i-2))\\
		&=h^0(\PP^2,\mathcal{O}_{\mathbb{P}^1}(2i-a))+ h^0(\PP^2,\mathcal{O}_{\mathbb{P}^1}(2i+a))-\binom{k+i+2}{2}+\binom{k+i}{2}\\
		&=\begin{cases}
			4i+2  & \text{ if }  i\geq \frac{a}{2};\\
			a+2i+1  & \text{ if } -\frac{a}{2}\leq i<\frac{a}{2};\\
			0  & \text{ if } i<-\frac{a}{2};
		\end{cases}+\begin{cases}
			-2k-2i-1 & \text{ if } -k\leq i<k;\\
			0  & \text{ if } i<-k;
		\end{cases} \\
		&=\begin{cases}
			a-2k  & \text{ if }  -k\leq i<k;\\
			a+2i+1  & \text{ if } -\frac{a}{2}\leq i<-k;\\
			0  & \text{ if } i<-\frac{a}{2}.
		\end{cases}
	\end{align*}

	If $a=2k$ then $h^0(\PP^2,\E(i)_{|\ell})- h^0(\PP^2,\E(i))+h^0(\PP^2,\E(i-2))=0$ for every $i<k$ and so by Fact (\ref{'A2}), the map $\times  C$ is injective for $i<k.$
 
	When $a>2k$, using Fact (\ref{'A2})  we get instead that  $\times  C$ is not injective for $-\frac{a}{2}\leq i<k$ (and it  is injective for $i< -\frac{a}{2}$).
	
	In a similar way using Serre Duality, and \cite[Proposition 3.7 ]{FFP21}, for $i\geq-k$
	\begin{align*}
		&h^1(\PP^2,\E(i)_{| C})- h^2(\PP^2,\E(i-2))+h^2(\PP^2,\E(i))=\\
		&h^1(\PP^2,\mathcal{O}_{\mathbb{P}^1}(2i-a))+ h^1(\PP^2,\mathcal{O}_{\mathbb{P}^1}(2i+a))- h^0(\PP^2,\E(-i-3))+h^0(\PP^2,\E(-i-1))=\\
		&h^0(\PP^2,\mathcal{O}_{\mathbb{P}^1}(-2i+a-2))+ h^0(\PP^2,\mathcal{O}_{\mathbb{P}^1}(-2i-a-2))-\binom{k-i+1 }{2}-\binom{k-i-1}{2}\\
		&= \begin{cases}
			0& \text{ if }  i>\frac{a}{2}-1;\\
			-2i+a-1  & \text{ if } -\frac{a}{2}-1< i\leq \frac{a}{2}-1;\\
			-4i-2  & \text{ if } i\leq-\frac{a}{2}-1;
		\end{cases}+\begin{cases}
			0 & \text{ if } i\geq k;\\
			2i-2k+1 & \text{ if }-k\leq i<k .
		\end{cases}
	\end{align*}
		If $a=2k$,	using Fact (\ref{'B2}),   $\times  C$ is surjective for   $i\geq-k$, so in this case  $\times  C$  always has maximum rank.
	If $a>2k$, $\times C$ is  not surjective for $-k\leq i< \frac{a}{2}$. So  $\times  C$ does not have maximum rank for $-k \leq i<k$; since $k>0$ this interval is not empty. Hence a smooth conic $C$ is not a Lefschetz conic for $\E$, when $a>2k$.
	
This proves that for  $\E$ unstable, normalized with $c_1(\E)=0$, $C$ is a Lefschetz conic if and only if $C$ is not a jumping conic.

	\emph{Case 2: $c_1(\E)=-1$}. In this case $\E$ is unstable and normalized with $c_1(\E)=-1$, therefore $k\geq0$ and $\E^\vee=\E(1)$. We proceed in a similar way to Case 1.	
 
 Using \cite[Proposition 3.7 ]{FFP21}  we have 
	\begin{align*}
		&h^0(\PP^2,\E(i)_{| C})- h^0(\PP^2,\E(i))+h^0(\PP^2,\E(i-2))\\
		&=h^0(\PP^2,\mathcal{O}_{\mathbb{P}^1}(2i+a)\oplus\mathcal{O}_{\mathbb{P}^1}(2i-a-2))- h^0(\PP^2,\E(i))+h^0(\PP^2,\E(i-2))\\
		&=\begin{cases}
			4i  & \text{ if }  i> \frac{a}{2};\\
			2i+a+1  & \text{ if } -\frac{a}{2}\leq i\leq\frac{a}{2};\\
			0  & \text{ if } i<-\frac{a}{2};
		\end{cases}+
		\begin{cases}
			-2k-2i-1 & \text{ if } -k\leq i\leq k;\\
			0  & \text{ if } i<-k;
		\end{cases} \\
		&=\begin{cases}
			a-2k  & \text{ if }  -k\leq i\leq k;\\
			2i+a+1  & \text{ if } -\frac{a}{2}\leq i<-k;\\
			0  & \text{ if } i<-\frac{a}{2}.
		\end{cases}\\
	\end{align*}
	Using Serre Duality and \cite[Proposition 3.7]{FFP21} again, we have for $i\geq -k$

	\begin{align*}
		&h^1(\PP^2,\E(i)_{| C})- h^2(\PP^2,\E(i-2))+h^2(\PP^2,\E(i))=\\
		&h^0(\PP^2,\mathcal{O}_{\small{\mathbb{P}^1}}(-2i-a-2))\hspace{-2pt}+h^0(\PP^2,\mathcal{O}_{\small{\mathbb{P}^1}}(-2i+a))
  \hspace{-3pt}-h^0(\PP^2,\E(-i))\hspace{-2pt}+h^0(\PP^2,\E(-i-2))\\
		&= \begin{cases}
			0& \text{ if }  i>\frac{a}{2};\\
			-2i+a+1  & \text{ if } -\frac{a}{2}\leq i\leq\frac{a}{2};\\
			-4i  & \text{ if } i<-\frac{a}{2};
		\end{cases}+\begin{cases}
			0 & \text{ if } i\geq k;\\
			-2i-2k-1 & \text{ if }-k\leq i< k ;
		\end{cases}\\ 
		&=\begin{cases}
			0& \text{ if }  i>\frac{a}{2};\\
			2i-a-1  & \text{ if }  k<i\leq\frac{a}{2};\\
			a-2k  & \text{ if } -k\leq i\leq k.
		\end{cases}
	\end{align*}
We can conclude, using Fact (\ref{'A2}) and Fact (\ref{'B2}), that 
  \begin{itemize}
      \item  if $a=2k$, then $\times  C$ is always injective for $i\leq k$ and  surjective for any $i\geq -k$.  So $\times  C$ always has maximum rank.
      \item  $\times  C$ does not have maximum rank for $-k\leq i\leq k$, and since $k\geq 0$, this interval is not empty. So, in this case, $ C$ is not a Weak Lefschetz element.
  \end{itemize}
This concludes our proof: $ C$ is a Lefschetz conic if only if it is not a jumping conic.
\end{proof}

Let us now consider the semistable case.
When $\E$ is semistable with first Chern class $c_1(\E)=-1$, as noted by Vitter \cite{Vitter}, the jumping conics (smooth or singular) are exactly the ones for which $h^0(\C;\E_{|C}) \neq0$, and equivalently $h^1(\C;\E_{|C}) \neq0$, while this is not true if $c_1(\E)=0$. This difference is noticed in the proof of \cite[Theorem 2]{Vitter}. 
This is exactly where the difference between jumping conics and non-Lefschetz conics lies: while for $c_1(\E)=-1$ they are equivalent, for $c_1(\E)=0$ the jumping conics are a subset corresponding to the condition of  $h^1(\C;\E_{|C}) \neq0$. 

\begin{teo} \cite[Theorem 2]{Vitter} \label{vit}
	The set of jumping conics $J_2$ of a semistable rank $2$ vector bundle $\E$ on $\PP^2$ can be given the scheme structure of a hypersurface in $\PP^5$ of degree $c_2(\E)$ if
	$c_1(\E)=0$ and of degree $c_2(\E)-1$ if $c_1(\E)=-1$. Furthermore, the singular jumping
	conics are in the scheme-theoretic closure of the smooth jumping conics.	
\end{teo}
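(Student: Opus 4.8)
The plan is to realize the jumping conics as the degeneracy locus of a morphism of vector bundles on $\PP^5$, produced by pushing forward a restriction of $\E$ over the universal conic, and then to read off the degree from Riemann--Roch. Let $\PP^5$ be the space of conics of Section~2, and let $\mathcal U\subset\PP^2\times\PP^5$ be the universal conic, i.e.\ the divisor of bidegree $(2,1)$ cut out by the tautological equation, with projections $\pi\colon\mathcal U\to\PP^2$ and $\rho\colon\mathcal U\to\PP^5$; the fibre of $\rho$ over $[C]$ is $C$. Tensoring
\[
0\to\OO_{\PP^2}(-2)\boxtimes\OO_{\PP^5}(-1)\to\OO_{\PP^2\times\PP^5}\to\OO_{\mathcal U}\to 0
\]
with $\pi^*\E$ and applying $R\rho_*$ gives, using Künneth together with the vanishing of $\HH^0(\PP^2,\E(t))$ and $\HH^2(\PP^2,\E(t))$ for $t\in\{-2,0\}$ (stability for the first, Serre duality and stability for the second; a strictly semistable $\E$ with $c_1(\E)=0$ is handled similarly by keeping the extra $\HH^0$-terms), a four--term exact sequence on $\PP^5$
\[
0\to\mathcal A\to \HH^1(\PP^2,\E(-2))\otimes\OO_{\PP^5}(-1)\xrightarrow{\ \Phi\ }\HH^1(\PP^2,\E)\otimes\OO_{\PP^5}\to\mathcal B\to 0,
\]
in which $\Phi$ is multiplication by the universal conic --- up to a choice of bases, the matrix $B_{-2}$ of Section~2 for the module $M=\HH^1_*(\PP^2,\E)$ --- and $\mathcal A,\mathcal B$ are the $0$th and $1$st relative pushforwards of $\pi^*\E|_{\mathcal U}$, so $\mathcal A_{[C]}\cong\HH^0(C,\E|_C)$ and $\mathcal B_{[C]}\cong\HH^1(C,\E|_C)$ for every conic $C$, smooth or not.

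Consider first $c_1(\E)=-1$. Here Riemann--Roch gives $\dim\HH^1(\PP^2,\E(-2))=\dim\HH^1(\PP^2,\E)=c_2(\E)-1$, so $\Phi$ is a square matrix of linear forms; since a general conic is non-jumping, $\E|_C$ is balanced there and $\HH^0(C,\E|_C)=\HH^1(C,\E|_C)=0$, so $\Phi$ is generically invertible and $\{\det\Phi=0\}$ is a hypersurface of degree $c_2(\E)-1$. On a conic $C$ one has $\omega_C\cong\OO_C(-1)\cong\det(\E|_C)$, hence $\E|_C^{\vee}\otimes\omega_C\cong\E|_C$ and Serre duality gives $\HH^1(C,\E|_C)\cong\HH^0(C,\E|_C)^{\vee}$; thus $\mathcal A$ and $\mathcal B$ have the same (scheme-theoretic) support, equal to $\{\det\Phi=0\}$. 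Since, as recalled in the text, for $c_1(\E)=-1$ a conic is a jumping conic precisely when $\HH^1(C,\E|_C)\neq 0$, the jumping locus $J_2$ is scheme-theoretically $\{\det\Phi=0\}$, a hypersurface of degree $c_2(\E)-1$.

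The case $c_1(\E)=0$ is the substantive one, and I expect it to be the main obstacle. Now $\dim\HH^1(\PP^2,\E(-2))=c_2(\E)$ while $\dim\HH^1(\PP^2,\E)=c_2(\E)-2$, so $\Phi$ is no longer square, and $\operatorname{Supp}\mathcal B=\{\HH^1(C,\E|_C)\neq 0\}$ cuts out only the conics with $\E|_C\cong\OO_{\PP^1}(a)\oplus\OO_{\PP^1}(-a)$, $a\geq 2$ --- this is the \emph{non-Lefschetz} locus, of codimension $>1$, not the jumping locus ($a\geq 1$). To detect $a\geq 1$ one must twist on $\mathcal U$ by a relative square root $\Theta$ of $\OO_C(1)$ (a theta characteristic of the conic, equal to $\OO_{\PP^1}(1)$ on each smooth $C$): then $\E|_C\otimes\Theta^{-1}\cong\OO_{\PP^1}(a-1)\oplus\OO_{\PP^1}(-a-1)$ is balanced with $\HH^0=\HH^1=0$ for a general conic and nonzero exactly for jumping conics, and the analogous pushforward yields again a \emph{square} matrix of linear forms on $\PP^5$, of size $c_2(\E)$, whose determinant defines $J_2$. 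The real difficulty is that $\Theta$ does not exist as a line bundle on $\mathcal U$: there is a parity obstruction --- over the smooth conics $\mathcal U\to\PP^5$ is a conic bundle with in general nontrivial Brauer class, and over the discriminant it truly degenerates --- so one must treat $\Theta$ as a twisted line bundle (a line bundle on a $\mu_2$-gerbe over $\PP^5$, or on an \'etale cover), verify that the resulting degeneracy locus and its divisor class on $\PP^5$ are independent of the choices, and extract the degree $c_2(\E)$, e.g.\ by Grothendieck--Riemann--Roch applied to $\rho_!\bigl(\pi^*\E\otimes\Theta^{-1}|_{\mathcal U}\bigr)$.

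Finally one must account for the singular conics. In both cases $J_2$ is cut out by a single form --- $\det\Phi$, resp.\ its twisted analogue --- globally defined on $\PP^5$, so $J_2$ is a Cartier divisor, pure of codimension $1$ and without embedded components. A general singular conic $\ell_1\ell_2$ with $\ell_1,\ell_2$ general lines is non-jumping (generic restriction; for $c_1(\E)=0$ also because the matching of the $\OO_{\PP^1}$-summands at the node fails generically), so the discriminant hypersurface $\Delta\subset\PP^5$ is not contained in $J_2$; hence $J_2\cap\Delta$ has codimension $\geq 2$ and $J_2\setminus\Delta=J_2\cap\{\text{smooth conics}\}$ is dense in $J_2$, which gives the last assertion of the theorem. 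It then remains to identify $J_2$ as a scheme with the jumping-conic scheme: over the smooth conics this is the construction above, and over each type of degenerate conic (a component that is a jumping line; a double line $\ell^2$ with $\ell$ a jumping line of second kind; a generic pair with coinciding $\OO_{\PP^1}$-summands) one checks that $\det\Phi$ vanishes there by a local computation of $\HH^1(C,\E|_C)$, or, in the $c_1(\E)=0$ case, by invoking the density just established together with the absence of embedded components along $\Delta$.
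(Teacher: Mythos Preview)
The paper does not prove this statement: Theorem~\ref{vit} is quoted from \cite[Theorem~2]{Vitter} and used as a black box to deduce Corollaries~\ref{jum2} and~\ref{jum3}. There is therefore no ``paper's own proof'' to compare your proposal against.

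That said, a brief assessment of your sketch on its own terms: the $c_1(\E)=-1$ case is essentially complete and is indeed the standard argument --- the universal-conic pushforward produces a square matrix of linear forms of size $c_2(\E)-1$, and Serre duality on $C$ identifies the kernel and cokernel supports. For $c_1(\E)=0$ you correctly diagnose that the naive degeneracy locus of $\Phi$ is the non-Lefschetz locus (splitting type $a\geq 2$) rather than the jumping locus ($a\geq 1$), and that one wants to twist by a relative theta characteristic to make the matrix square again. You also correctly flag the obstruction: no global $\Theta$ exists on $\mathcal U$, so one must work on a double cover or gerbe and descend the divisor class. What you have written there is an outline rather than a proof --- the GRR computation, the descent of the degeneracy class to $\PP^5$, and the behaviour over the discriminant are all asserted rather than carried out --- so the $c_1=0$ half remains a genuine gap in your write-up. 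Vitter's original argument handles this case by a more hands-on construction; if you want to complete your approach you would need to actually execute the twisted-pushforward computation and verify the degree is $c_2(\E)$.
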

The following results can be seen as corollaries of this theorem. 

\begin{cor}\label{jum2}
	Let $\E$ be a stable, normalized vector bundle with $c_1(\E)=-1$. A conic $C$ fails to be a Weak Lefschetz conic if and only if it is a jumping conic.
\end{cor}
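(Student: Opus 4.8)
The plan is to show that $C$ fails to have maximum rank in some degree precisely when the single ``central'' multiplication $\times C\colon [M]_{-2}\to[M]_0$ fails, and then to identify that failure with the jumping condition via Vitter's characterization. Write $M=\HH_*^1(\PP^2,\E)$; since being a Lefschetz conic is shift-independent we may assume $\E$ normalized, so $c_1(\E)=-1$ and, by Lemma~\ref{stab}, $\HH^0(\PP^2,\E)=0$, i.e.\ $h^0(\PP^2,\E(i))=0$ for $i\le 0$. Because $\check\E\cong\E(1)$, Serre duality gives $h^2(\PP^2,\E(i))=h^0(\PP^2,\E(-i-2))=0$ for $i\ge -2$ and $h^1(\PP^2,\E(i))=h^1(\PP^2,\E(-i-2))$ for all $i$; in particular the Hilbert function of $M$ is symmetric about degree $-1$ and $\dim[M]_{-2}=\dim[M]_0$.

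First I would analyze the central map. Feeding $i=0$ into the long exact sequence of $0\to\E(-2)\to\E\to\E_{|C}\to0$ and using $\HH^0(\PP^2,\E)=0=\HH^2(\PP^2,\E(-2))$, it collapses to $0\to\HH^0(\PP^2,\E_{|C})\to\HH^1(\PP^2,\E(-2))\xrightarrow{\times C}\HH^1(\PP^2,\E)\to\HH^1(\PP^2,\E_{|C})\to0$. As the two middle terms have equal dimension, $\times C\colon[M]_{-2}\to[M]_0$ is bijective $\iff h^0(\E_{|C})=0\iff h^1(\E_{|C})=0$. By the characterization recalled just before Theorem~\ref{vit} (Vitter, case $c_1=-1$), a conic $C$ --- smooth or singular --- is a jumping conic exactly when $h^0(\E_{|C})\ne0$. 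Hence if $C$ is a jumping conic, the central map is neither injective nor surjective, so $C$ is not a Lefschetz conic; this settles one implication.

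For the converse, suppose $C$ is not a jumping conic, so $h^0(\E_{|C})=h^1(\E_{|C})=0$; I must show $\times C\colon[M]_j\to[M]_{j+2}$ has maximum rank for every $j$, the case $j=-2$ being the isomorphism above. For $j\ne-2$ I would invoke Facts (\ref{'A1}) and (\ref{'B1}): $\times C\colon[M]_j\to[M]_{j+2}$ is injective if $h^0(\E(j+2)_{|C})=0$ and surjective if $h^1(\E(j+2)_{|C})=0$. If $C$ is smooth then, being non-jumping, $\E_{|C}\cong\OO_{\PP^1}(-1)^{\oplus2}$, so $\E(i)_{|C}\cong\OO_{\PP^1}(2i-1)^{\oplus2}$, whose $\HH^0$ vanishes iff $i\le0$ and whose $\HH^1$ vanishes iff $i\ge0$; thus $\times C$ is injective for $j\le-2$ and surjective for $j\ge-2$, hence of maximum rank throughout. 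If $C=\ell_1\cup\ell_2$ with $\ell_1\ne\ell_2$, then non-jumping forces $\E_{|\ell_j}\cong\OO_{\PP^1}\oplus\OO_{\PP^1}(-1)$ for both $j$; restricting $0\to\E(i)_{|C}\to\E(i)_{|\ell_1}\oplus\E(i)_{|\ell_2}\to\E(i)_{|p}\to0$ (where $p=\ell_1\cap\ell_2$) yields $h^0(\E(i)_{|C})=0$ for $i\le-1$, and Serre duality on the (Gorenstein) conic $C$, using $\omega_C\cong\OO_C(-1)$ and $(\E_{|C})^\vee\cong\E(1)_{|C}$, turns this into $h^1(\E(i)_{|C})=h^0(\E(-i)_{|C})=0$ for $i\ge1$; so $\times C$ is injective for $j\le-3$, an isomorphism at $j=-2$, and surjective for $j\ge-1$. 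The double-line case $C=\ell^2$ is handled identically, restricting along $0\to\E(i-1)_{|\ell}\to\E(i)_{|\ell^2}\to\E(i)_{|\ell}\to0$ and using that $C=\ell^2$ non-jumping means $\ell$ is not a jumping line of the second kind (Definition~\ref{secondtype}), so $h^0(\E(i)_{|\ell^2})=0$ for $i\le-1$; note a jumping line automatically makes $\ell^2$ of the second kind, so the two notions of non-jumping agree here. In every case $C$ is a Lefschetz conic, completing the equivalence.

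The main obstacle is the bookkeeping in the singular case: one must spell out which reducible or double conics are non-jumping (both components non-jumping lines whose $\OO_{\PP^1}$-summands do not coincide at the node; respectively, $\ell$ not of the second kind) and verify that Vitter's characterization $h^0(\E_{|C})=0$ is consistent with the component-wise cohomology computations. By contrast, the smooth case and the reduction to the central degree are routine once the stability vanishings and the long exact sequences are in place, and require no input beyond Vitter's theorem and Facts (\ref{'A1})--(\ref{'B3}).
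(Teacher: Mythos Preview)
Your proof is correct and follows essentially the same strategy as the paper: reduce to the central map $\times C\colon[M]_{-2}\to[M]_0$, identify its failure with $h^0(\E_{|C})\neq 0$ via the long exact sequence and the stability vanishings, and then invoke Vitter's characterization of jumping conics in the case $c_1=-1$. The smooth non-jumping case is handled identically in both.

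The only difference is in the singular case. The paper dispatches it in one line by citing the proof of Proposition~\ref{LC} (``the same proof shows that $C=\ell_1\ell_2$ is a non-Lefschetz conic if and only if $h^0(\E_{|C})\neq 0$''), implicitly relying on the fact that if $C$ is non-jumping then neither component is a jumping line, so $\times\ell_j$ has maximal rank in every degree and the composite $\times\ell_1\ell_2$ automatically has maximal rank away from $j=-2$. You instead compute $h^0(\E(i)_{|C})$ and $h^1(\E(i)_{|C})$ directly via Mayer--Vietoris (for $\ell_1\cup\ell_2$) or the thickening sequence (for $\ell^2$), together with Serre duality on the Gorenstein curve $C$. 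Your route is more self-contained and makes explicit the small point---that a jumping line forces $\ell$ to be of the second kind---which the paper's citation leaves implicit. Both arguments are valid; yours trades a short citation for a transparent cohomological computation.
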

\begin{proof}
Let us first consider the case when $C=\ell_1\ell_2$ is singular. If $\C=\ell^2$ we show in the proof of Proposition \ref{LC} that $C$ is a non-Lefschetz conic if and only if it is a jumping line of the second type, i.e. $h^0(\C;\E_{|C}) \neq0$. The same proof shows that $C=\ell_1\ell_2$ is a non-Lefschetz conic if and only if $h^0(\C;\E_{|C}) \neq0$.

Let us assume now that $C$ is smooth.
	Then $\E_{| C}\cong	\OO_{\PP^1}(a-1)\oplus\OO_{\PP^1}(-a-1)$ and $ C$ is a jumping conic if and only if $a>0$.
 In this case $\chi(\E_{| C})=0$, so $h^0(\PP^2,\E_{| C})=h^1(\PP^2,\E_{| C})$, in particular for a jumping conic they are both non-zero. 
 
	Let us assume $C$ is a jumping conic. We have $\HH^0(\PP^2,\E)= 0$ because $\E$ is stable. Then the map $0= \HH^0(\PP^2,\E)\to \HH^0(\PP^2,\E_{| C})\neq0$ is not surjective, so by Fact (\ref{'A3}) the map $\times  C$ is not injective.
	
	The map $  \HH^1(\PP^2,\E_{| C})\to \HH^2(\PP^2,\E(-2))$ cannot be injective because  $\HH^1(\PP^2,\E_{| C})\neq 0$ and $\HH^2(\PP^2,\E(-2))\cong \HH^0(\PP^2,\check{\E}(1))\cong \HH^0(\PP^2,\E(-2))=0$, using Serre Duality and stability of $\E$. Then $\times  C$ is not surjective by Fact (\ref{'B3}). 
 In conclusion, $ C$ is not a Lefschetz conic because the multiplication map $\times C$ fails to have maximal rank from degree $-2$ to degree $0$.
 
	Let us consider now a smooth non jumping conic $ C$ so $\E_{| C}\cong	\OO_{\PP^1}(-1)\oplus\OO_{\PP^1}(-1)$. 
 Using Fact (\ref{'A1}), the map $\times  C$ is injective if $h^0(\PP^2,\E(i)_{| C})=h^0(\PP^1,\OO_{\PP^1}(2i-1)\oplus\OO_{\PP^1}(2i-1))=0$, then it is injective for every $i\leq0$.
	Now, $\times  C$ is surjective if $h^1(\PP^2,\E(i)_{| C})=0$ by Fact (\ref{'B1}).
	Using Serre Duality again, 
	\begin{align*}
		h^1(\PP^2,\E(i)_{| C})&= h^1(\PP^1,\OO_{\PP^1}(2i-1)\oplus\OO_{\PP^1}(2i-1))\\
		&= h^0(\PP^1,\OO_{\PP^1}(-2i-1)\oplus\OO_{\PP^1}(-2i-1)),
	\end{align*} 
	then it is zero for $i>0$.
	Then  $\times  C$ always has maximum rank, so $ C$ is a Lefschetz element.
\end{proof}

\begin{cor}\label{jum3}
	Let $\E$ be a semistable, normalized vector bundle with $c_1(\E)=0$. A smooth conic $C$ fails to be a Weak Lefschetz conic if and only if it is a jumping conic and $\E_{| C}\cong \OO_{\PP^1}(a)\oplus\OO_{\PP^1}(-a)$ with $a>1$. When $C$ is singular, the definitions of jumping and non-Lefschetz are equivalent.
\end{cor}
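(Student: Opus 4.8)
\emph{Proof idea.} The plan is to reduce everything to the restriction $\E_{|C}$ and to feed it into the long exact cohomology sequence of $0\to\E(i-2)\to\E(i)\to\E(i)_{|C}\to0$ together with Facts (\ref{'A1})--(\ref{'B3}). Since the Lefschetz property is invariant under twisting, I would first normalize $\E$, so that $\E^\vee\cong\E$ and, by Serre duality on $\PP^2$, $\HH^2(\PP^2,\E(j))\cong\HH^0(\PP^2,\E(-j-3))^*$ for every $j$. For a smooth conic $C$ one has $C\cong\PP^1$, $\OO_{\PP^2}(1)_{|C}\cong\OO_{\PP^1}(2)$, and $\E_{|C}\cong\OO_{\PP^1}(a)\oplus\OO_{\PP^1}(-a)$ for a unique $a\ge0$, with $C$ a jumping conic exactly when $a\ge1$; so the smooth case of the corollary is equivalent to the statement ``$C$ fails to be a Lefschetz conic if and only if $a\ge2$'', which I would prove by the two implications. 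Throughout I use $\E(i)_{|C}\cong\OO_{\PP^1}(2i+a)\oplus\OO_{\PP^1}(2i-a)$.

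\emph{Step 1: if $a\le1$, then $C$ is a Lefschetz conic.} A one-line count on $\PP^1$ from $\E(i)_{|C}\cong\OO_{\PP^1}(2i+a)\oplus\OO_{\PP^1}(2i-a)$ with $0\le a\le1$ shows that $h^0(\PP^2,\E(i)_{|C})=0$ for all $i\le-1$ and $h^1(\PP^2,\E(i)_{|C})=0$ for all $i\ge0$. By Fact (\ref{'A1}) the map $\times C\colon[M]_{i-2}\to[M]_i$ is injective whenever $i\le-1$, and by Fact (\ref{'B1}) it is surjective whenever $i\ge0$; since every integer lies in one of those ranges, $\times C$ has maximal rank in every degree. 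In particular a smooth conic with splitting type $\OO_{\PP^1}(1)\oplus\OO_{\PP^1}(-1)$ is a jumping conic that is nonetheless Lefschetz — the phenomenon with no analogue in the line case, and the reason the threshold sits at $a\ge2$.

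\emph{Step 2: if $a\ge2$, then $C$ is not a Lefschetz conic.} I would exhibit the failure in degree $i=0$, i.e.\ for $\times C\colon[M]_{-2}\to[M]_0$. From $\E(0)_{|C}\cong\OO_{\PP^1}(a)\oplus\OO_{\PP^1}(-a)$ we get $h^1(\PP^2,\E(0)_{|C})=a-1\ge1$ and $h^0(\PP^2,\E(0)_{|C})=a+1\ge3$. Since $\E$ is semistable, $\HH^0(\PP^2,\E(-1))=0$ by Lemma \ref{stab}, hence $\HH^2(\PP^2,\E(-2))\cong\HH^0(\PP^2,\E(-1))^*=0$; as $\HH^1(\PP^2,\E(0)_{|C})\ne0$ maps to $0$, Fact (\ref{'B3}) shows $\times C$ is not surjective. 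For non-injectivity, restrict to a general line $\ell$: by Grauert--M\"ulich $\E_{|\ell}\cong\OO_{\PP^1}\oplus\OO_{\PP^1}$, so the sequence $0\to\E(-1)\stackrel{\times\ell}{\to}\E\to\E_{|\ell}\to0$ gives $h^0(\PP^2,\E)\le h^0(\PP^2,\E(-1))+h^0(\PP^2,\E_{|\ell})=0+2=2<a+1$. Thus $\HH^0(\PP^2,\E(0))\to\HH^0(\PP^2,\E(0)_{|C})$ cannot be surjective, and Fact (\ref{'A3}) shows $\times C$ is not injective. Hence $\times C$ has non-maximal rank in degree $0$ and $C$ is a non-Lefschetz conic, which finishes the smooth case.

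\emph{Step 3: singular conics, and the main obstacle.} For $C=\ell_1\ell_2$, since $c_1(\E)=0$ is even, Remark \ref{singular} applies and says $\times C\colon[M]_i\to[M]_{i+2}$ fails to have maximal rank in some degree if and only if at least one of $\ell_1,\ell_2$ is a jumping line; for a normalized semistable bundle with $c_1(\E)=0$ this is exactly the definition of $C$ being a (singular) jumping conic, so the two notions coincide. The main obstacle is Step~1: one must verify carefully that the splitting type $\OO_{\PP^1}(1)\oplus\OO_{\PP^1}(-1)$ still produces maximal rank in \emph{every} degree, since this is precisely what separates ``jumping'' from ``non-Lefschetz''. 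A secondary technical point is treating strictly semistable $\E$ (where $\HH^0(\PP^2,\E)\ne0$) on the same footing as stable $\E$, which is why the argument passes through the bound $h^0(\PP^2,\E)\le2$ rather than through vanishing.
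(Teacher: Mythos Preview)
Your proof is correct and follows essentially the same structure as the paper's: the singular case via Remark \ref{singular}, the case $a\le 1$ via Facts (\ref{'A1}) and (\ref{'B1}), and the case $a\ge 2$ by showing that $\times C\colon[M]_{-2}\to[M]_0$ is neither injective nor surjective using Facts (\ref{'A3}) and (\ref{'B3}).

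The one point of departure is how you control $h^0(\PP^2,\E)$ in Step~2. The paper splits into the stable case (where $h^0(\PP^2,\E)=0$ by Lemma \ref{stab}) and the strictly semistable case (where it proves $h^0(\PP^2,\E)=1$ by taking a nonzero regular section $s\in\HH^0(\PP^2,\E)$ and passing to the ideal sheaf of its zero locus). You instead obtain the uniform bound $h^0(\PP^2,\E)\le 2$ by restricting to a general line via Grauert--M\"ulich, which suffices since $a+1\ge 3$. Your route is slightly cleaner in that it avoids the case split; the paper's route, on the other hand, extracts the exact value $h^0(\PP^2,\E)=1$ in the strictly semistable case, and this sharper statement is reused later in the computation of the expected codimension (Proposition \ref{expcod}).
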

\begin{proof}
The case when $C$ is singular is the same as in Proposition \ref{jum1} for unstable vector bundles. In fact $C=\ell_1\ell_2$ is a non-Lefschetz conic if and only if at least one between $\ell_1$ or $\ell_2$ is a jumping line by Remark \ref{singular} as in the definition of singular jumping conic.

Let us assume first that $\E_{| C}\cong \OO_{\PP^1}(a)\oplus\OO_{\PP^1}(-a)$ with $a=0$ or $a=1$. We want to show that in this case, $C$ is a Lefschetz conic. 
	Since $h^0(\PP^2,\E(i)_{| C})=h^0(\PP^1,\OO_{\PP^1}(2i+1)\oplus\OO_{\PP^1}(2i-1))=0$,  for every $i<0$,  then Fact (\ref{'A1}) implies $\times  C$ injective for every $i<0$.
	
	The map $\times  C$ is surjective if $h^1(\PP^2,\E(i)_{| C})=0$ by Fact \ref{'B1}.
	Using Serre Duality 
	\begin{align*}
		h^1(\PP^2,\E(i)_{| C})&=h^1(\PP^1,\OO_{\PP^1}(2i+a)\oplus\OO_{\PP^1}(2i-a))\\
		&=h^0(\PP^1,\OO_{\PP^1}(-2i-2-a)\oplus\OO_{\PP^1}(-2i-2+a))
	\end{align*} 
	then it is zero for $i\geq0$ (here $a=0$ or $a=1$).
	Then  $\times  C$ always has maximal rank, so $ C$ is a Lefschetz element.
	
	Now we will show that for any smooth conic $C$ such that $\E_{| C}\cong \OO_{\PP^1}(a)\oplus\OO_{\PP^1}(-a)$ with $a>1$, $\times  C: \HH^1(\PP^2, \E(-2))\to\HH^1(\PP^2, \E)$ does not have maximal rank and so $ C$ is not a Lefschetz conic.
    Note that if $a>1$, $\HH^0(\PP^2\E_{| C})\cong \HH^0(\PP^2,	\OO_{\PP^1}(-a)\oplus\OO_{\PP^1}(a))\neq 0$ and, using Serre Duality
    $$\HH^1(\PP^2,\E_{| C})=\HH^1(\PP^1,\OO_{\PP^1}(-a)\oplus\OO_{\PP^1}(a))=\HH^0(\PP^1,\OO_{\PP^1}(a-2)\oplus\OO_{\PP^1}(-a-2))\neq 0.$$
 
	We consider first the case where $\E$ is stable; later we return to the case when $\E$ is semistable but not stable. 
	By stability $\HH^0(\PP^2,\E)=0$, and so $\HH^0(\PP^2,\E)\to \HH^0(\PP^2,\E_{| C})\neq0$ cannot be surjective. Therefore, by Fact (\ref{'A3}), the map $\times  C$ is not injective. 
	Using Serre Duality, and (semi)stability  we also have $$\HH^2(\PP^2,\E(-2))=\HH^0(\PP^2\E(-1))=0.$$
	Then the map $0\neq \HH^1(\PP^2,\E_{| C})\to \HH^2(\PP^2,\E(i-2))=0$ is not injective. Then $\times  C$ cannot be surjective by Fact (\ref{'B3}).

 	When $\E$ is semistable but not stable, the same argument shows that $\times  C$ is not surjective, but in this case $\HH^0(\PP^2,\E)\neq0$. By Fact (\ref{'A3}), the map $\times  C$  is injective if and only if the map $\HH^0(\PP^2,\E)\to \HH^0(\PP^2,\E_{| C})$ is surjective. Looking at the long exact sequence 
	\begin{align*}
		\HH^0(\PP^2,\E(-2))\to \HH^0(\PP^2,\E)\to \HH^0(\PP^2,\E_{| C})\to \HH^1(\PP^2,\E(-2))\stackrel{\times  C}{\to} \HH^1(\PP^2,\E)
		\to \cdots
	\end{align*}
	we know that that map is injective because $\HH^0(\PP^2,\E(-2))=0$. So $\times  C$  is injective if and only if the map $\HH^0(\PP^2,\E)\to \HH^0(\PP^2,\E_{| C})$ is an isomorphism, but this is not possible since $h^0((\PP^2,\E_{| C}))=a+1>2$ while we will show that $h^0(\PP^2,\E)=1.$

 We know that $h^0(\PP^2,\E)\neq 0$, so we can take  a non-zero section $s\in \HH^0(\PP^2,\E)$.
 	Since the section $s$ must be regular, we have the exact sequence
 	$$0\to \OO_{\PP^2}\to \E\to \mathcal{I}\to 0$$
 	where $\mathcal{I}$ is the ideal sheaf of a set of points $Z$. We assumed $M=\HH^{1}_*(\PP^2,\E)\neq0$, hence $\E\not\cong\OO_{\PP^2}\oplus\OO_{\PP^2}$. This implies that $Z\neq\emptyset$, and thus 
  $h^0(\PP^2,\mathcal{I})=0$.
  From the cohomology sequence
  $$0\to \HH^0(\PP^2,\OO_{\PP^2})\to \HH^0(\PP^2,\E)\to \HH^0(\PP^2,\mathcal{I})= 0,$$
  we obtain $h^0(\PP^2,\E)=h^0(\PP^2,\OO_{\PP^2})=1$.
\end{proof}

\subsection{Expected codimension of the non-Lefschetz locus of conics}
In this section, we want to talk about the codimension of the non-Lefschetz locus of conics $\C_M$.  The non-Lefschetz locus of conics $\C_M$ is defined a priori as a union of determinantal schemes. To compute the expected codimension, we need to show $\C_A$ is ``concentrated'' in one degree.

\begin{prop}
    The non-Lefschetz locus of $M=\HH^1_*(\PP^2,\E)$, for any rank $2$ vector bundle $\E$, a subscheme of $\PP^5$ coincides with the non-Lefschetz locus in the middle degree:
	$$\C_M =\C_{\lfloor \frac{d-3}{2}\rfloor-1,M}$$
 where $-d=c_1(\E)$ is the first Chern class of $\E$.
\end{prop}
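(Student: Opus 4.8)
This is the module-theoretic counterpart of Theorem~\ref{'mid}, and the plan is to rerun that proof with the socle degree replaced by $d-3$. Three ingredients make this work. First, $M$ has the WLP: this is the content of \cite{FFP21} recalled in the proof of Proposition~\ref{LC}, and it forces the Hilbert function $h_i=\dim_k[M]_i=h^1(\PP^2,\E(i))$ to be unimodal, with multiplication by a general $\ell\in[R]_1$ bijective on the plateau straddling degree $\frac{d-3}{2}$. Second, $M$ is self-dual up to a shift: by Serre duality $\HH^1(\PP^2,\E(i))^\vee\cong\HH^1(\PP^2,\check{\E}(-i-3))$, and since $\E$ has rank $2$ one has $\check{\E}\cong\E(-c_1(\E))=\E(d)$ --- the same fact already exploited in the proof of Proposition~\ref{LC} --- so that $M^\vee\cong M(d-3)$ as graded $R$-modules. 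Hence $h_i=h_{d-3-i}$, and dualizing the multiplication maps shows that in suitable bases $B_i$ and $B_{d-5-i}$ are transposes of one another, so $I(\C_{M,i})=I(\C_{M,d-5-i})$ as ideals of $S$, not merely as loci. Third, $\C_{M,i}\subsetneq\PP^5$ for every $i$, since $\ell^2$ is a Lefschetz conic for general $\ell$ by Proposition~\ref{LC}; thus $I(\C_{M,i})\neq 0$ always, so the ``no socle / no new generators'' hypotheses in Proposition~\ref{matrix} and Remark~\ref{dual} (which are only invoked to handle a zero ideal) are satisfied vacuously --- and in any case the WLP rules out socle on the non-decreasing part of $h$ and new generators on the non-increasing part. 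Recall that Proposition~\ref{matrix} and Remark~\ref{dual} are valid for finite length modules.

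Granting these, the argument is formal. By unimodality, Proposition~\ref{matrix} yields a descending chain $\cdots\supseteq I(\C_{M,i})\supseteq I(\C_{M,i+2})\supseteq\cdots$ along the non-decreasing part of $h$ up to the centre, and Remark~\ref{dual} the mirror chain on the non-increasing part; identifying the two halves through $I(\C_{M,i})=I(\C_{M,d-5-i})$, exactly as in Theorem~\ref{'mid}, the infinite intersection collapses to
$$I(\C_M)=\bigcap_{i}I(\C_{M,i})=I\big(\C_{M,\lfloor\frac{d-3}{2}\rfloor-1}\big)\cap I\big(\C_{M,\lfloor\frac{d-3}{2}\rfloor-2}\big).$$
Finally, fixing a Lefschetz element $\ell$ and running the matrix argument of Theorem~\ref{'mid} verbatim (the ``odd'' case if $d-3$ is odd, the ``even'' case if $d-3$ is even, where one first uses the self-duality identity $I(\C_{M,\lfloor\frac{d-3}{2}\rfloor-2})=I(\C_{M,\lfloor\frac{d-3}{2}\rfloor})$), the commutative square coming from $\times\ell$ realizes the relevant matrices as sub- or super-matrices of one another, giving $I(\C_{M,\lfloor\frac{d-3}{2}\rfloor-1})\subseteq I(\C_{M,\lfloor\frac{d-3}{2}\rfloor-2})$ by a generalized Laplace expansion. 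Hence $I(\C_M)=I(\C_{M,\lfloor\frac{d-3}{2}\rfloor-1})$.

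The only genuinely new point beyond Theorem~\ref{'mid} is the self-duality of $M$ --- specifically that the transpose of a multiplication-by-$C$ map is again multiplication by $C$, which is what promotes the set-level symmetry $\C_{M,i}=\C_{M,d-5-i}$ to the scheme-level equality of the defining ideals; I expect this to be routine given $\check{\E}\cong\E(d)$, and it is where the main care is needed. The residual bookkeeping --- the finitely many degrees with $h_i=0$ contribute $\C_{M,i}=\emptyset$ and drop out of the intersection, and the Lefschetz plateau of $M$ sits exactly at degrees $\lfloor\frac{d-3}{2}\rfloor-1$ and $\lfloor\frac{d-3}{2}\rfloor$ by the explicit description of $\times\ell$ quoted in the proof of Proposition~\ref{LC} --- is immediate.
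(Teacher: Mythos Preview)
Your approach is essentially the paper's: WLP from \cite{FFP21} gives unimodality and the absence of socle/new generators in the relevant ranges, Serre duality together with $\check{\E}\cong\E(d)$ gives the symmetry $I(\C_{M,i})=I(\C_{M,d-5-i})$, and then one reruns Theorem~\ref{'mid} using Proposition~\ref{matrix} and Remark~\ref{dual}. One small correction: the no-socle hypothesis in Proposition~\ref{matrix} is \emph{not} only invoked for the zero-ideal case --- Step~3 of that proof also uses it (via ``the argument in Step~2'') to propagate injectivity of $\times C$ from degree $i+2$ down to degree $i$ --- so your parenthetical is off, but your fallback justification via WLP is correct and is exactly what the paper uses.
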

\begin{proof}
    The proof is similar to what we did for Gorenstein algebras in Theorem \ref{'mid}.
    We know that $M$ has the WLP by \cite{FFP21}; as a consequence we have that:
    \begin{itemize}
        \item the Hilbert function of $M$ is unimodal;
        \item $M$ has no socle until degree $\lfloor \frac{d-3}{2}\rfloor$ and no new generators after that degree.
    \end{itemize}
    Moreover, by Serre Duality and the fact that $\check \E=\E(d),$ we have $$\HH^1\Big(\PP^2,\E\Big(\Big\lfloor  \frac{d-4}{2}\Big\rfloor-i\Big)\Big)\cong \HH^1\Big(\PP^2,\check\E\Big(-\Big\lfloor \frac{d-3}{2}\Big\rfloor+i-3\Big)\Big)\cong \HH^1\Big(\PP^2,\E\Big(\Big\lfloor  \frac{d-2}{2}\Big\rfloor+i\Big)\Big).$$
    Applying Proposition \ref{matrix} and Remark \ref{dual} and following the same proof as that of Theorem \ref{'mid} yields 
    $\C_M =\C_{\lfloor \frac{d-3}{2}\rfloor-1,M}.$
\end{proof}

This result assures us that the non-Lefschetz locus of conics has a determinantal structure, so we can compute the expected codimension.

\begin{prop}\label{expcod} Let $\E$ be a rank $2$ vector bundle over $\PP^2$. Then the non-Lefschetz locus of conics of $M=\HH^1_*(\PP^2,\E)$ has expected codimension 
    $$\exc\C_{M}=\begin{cases} 
    1& \text{ if } \E \text{ is unstable or has first Chern class odd;}\\
	2& \text{ if } \E \text{ is semistable but not stable;}\\
	3& \text{ if } \E \text{ is stable with $c_1(\E)$ even.}\\
\end{cases}$$	
\end{prop}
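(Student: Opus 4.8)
The strategy is to reduce the whole statement to a single Hilbert-function computation. By the previous proposition $\C_M=\C_{M,i_0}$ with $i_0=\lfloor\tfrac{d-3}{2}\rfloor-1$, so $\C_M$ is scheme-theoretically the locus on $\PP^5$ where the $h_{i_0+2}\times h_{i_0}$ matrix $B_{i_0}$ of linear forms drops rank, with $h_j=\dim[M]_j=h^1(\PP^2,\E(j))$. Hence
$$\exc\C_M=|h_{i_0+2}-h_{i_0}|+1,$$
and it is enough to determine the jump $|h_{i_0+2}-h_{i_0}|$. Since $\check\E\cong\E(d)$, Serre duality gives the symmetry $h_j=h_{d-3-j}$; in particular when $c_1(\E)=-1$ (so $d=1$, $i_0=-2$) this already forces $h_{i_0+2}=h_0=h_{-2}=h_{i_0}$, so the jump is $0$ and $\exc\C_M=1$.

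For $\E$ unstable the jump is again $0$: by Proposition \ref{LC} multiplication by the square of a general line is an isomorphism $[M]_{i_0}\to[M]_{i_0+2}$ — in the unstable case the Hilbert function of $M$ is constant on the whole bijective plateau around the middle — so $h_{i_0}=h_{i_0+2}$. The only remaining case is $\E$ semistable with $c_1(\E)=0$; then $d=0$, and using the symmetry $h_j=h_{-3-j}$ the jump can be read off from the failure of bijectivity of $\times C\colon[M]_{-2}\to[M]_0$ for a general conic $C$.

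To do this I would restrict the sequence $0\to\E(-2)\to\E\to\E_{|C}\to 0$ to a general smooth conic $C$ and use the associated long exact cohomology sequence together with the bookkeeping recorded earlier. By Vitter's conic Grauert--M\"ulich theorem, $\E_{|C}\cong\OO_{\PP^1}\oplus\OO_{\PP^1}$ for general $C$, so $h^1(\PP^2,\E_{|C})=0$ and $\times C$ is surjective; semistability gives $\HH^0(\PP^2,\E(-2))\subseteq\HH^0(\PP^2,\E(-1))=0$, so the sequence collapses to
$$0\to\HH^0(\PP^2,\E)\to\HH^0(\PP^2,\E_{|C})\to\HH^1(\PP^2,\E(-2))\xrightarrow{\times C}\HH^1(\PP^2,\E)\to0,$$
whence $h_{-2}-h_0=h^0(\PP^2,\E_{|C})-h^0(\PP^2,\E)=2-h^0(\PP^2,\E)$. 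By Lemma \ref{stab}, $h^0(\PP^2,\E)=0$ precisely when $\E$ is stable, which yields jump $2$ and $\exc\C_M=3$; and when $\E$ is semistable but not stable, $h^0(\PP^2,\E)\neq0$, while the section argument from the proof of Corollary \ref{jum3} (a regular section produces $0\to\OO_{\PP^2}\to\E\to\mathcal{I}_Z\to0$ with $Z\neq\emptyset$, since $M\neq0$) forces $h^0(\PP^2,\E)=1$, giving jump $1$ and $\exc\C_M=2$. Assembling the cases yields the asserted formula.

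The delicate point is this last step: one must be certain that for a general conic the cokernel of $\times C$ really vanishes, so that the entire jump sits in the kernel and is controlled by $h^0(\PP^2,\E)$, and one must pin down $h^0(\PP^2,\E)\in\{0,1\}$ throughout the semistable range — exactly the input that separates the codimension-$2$ and codimension-$3$ regimes. I would also double-check, in the spirit of the proof of Theorem \ref{'mid}, that the index $i_0$ produced by the previous proposition and the index obtained after invoking the Serre-duality symmetry cut out the same ideal, so that the determinantal description of $\C_M$ used above is unambiguous.
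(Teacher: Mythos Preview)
Your proof is correct. For the $c_1(\E)=-1$ case and the unstable case you argue exactly as the paper does (Serre-duality symmetry $h_j=h_{d-3-j}$, respectively the WLP plateau from \cite{FFP21}). In the remaining case --- $\E$ semistable with $c_1(\E)=0$ --- you take a different route: you restrict to a general smooth conic $C$, invoke Vitter's conic Grauert--M\"ulich theorem to get $\E_{|C}\cong\OO_{\PP^1}^{\oplus 2}$, and then read off $h_{-2}-h_0=h^0(\PP^2,\E_{|C})-h^0(\PP^2,\E)=2-h^0(\PP^2,\E)$ directly from the truncated long exact sequence. The paper instead stays purely numerical, expanding $h^1(\E(-2))-h^1(\E)+1$ via Euler characteristics and Riemann--Roch (using $\chi(\E)-\chi(\E(-2))=2$) to reach $3-h^0(\PP^2,\E)$ without ever introducing an auxiliary conic. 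Both arguments land on the same final input, namely $h^0(\PP^2,\E)\in\{0,1\}$ distinguished by stability via the regular-section argument of Corollary~\ref{jum3}, so they are equivalent in strength; your version is more geometric and dovetails with the earlier jumping-conic machinery, while the paper's avoids having to check that the cokernel of $\times C$ vanishes for general $C$. One small remark: your closing caveat about verifying that the shifted index cuts out the same \emph{ideal} is unnecessary here, since the expected codimension depends only on the Hilbert numbers $h_{i_0}$ and $h_{i_0+2}$, not on the scheme structure.
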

\begin{proof} 
Since $\C_M =\C_{\lfloor \frac{d-3}{2}\rfloor-1,M}$, it has a determinantal structure with expected codimension 
$$\exc \C_M= h_{\lfloor \frac{d-3}{2}\rfloor+1}-h_{\lfloor \frac{d-3}{2}\rfloor-1}+1$$
where $-d=c_1(\E)$ is the first Chern class of $\E$.

\emph{Case 1: odd Chern class.} If $d$ is odd, by Serre duality,  $h^1 (\PP^2,\E ( \frac{d-3}{2} -1 ) )= h^1 (\PP^2,\E ( \frac{d-3}{2} +1 ) )$, hence $\exc \C_M=1$.

\emph{Case 2: even Chern class.} Let us now consider $d$ even. Without loss of generality, we can assume $\E$ is normalized. Then $d=-c_1(\E)=0$, and 
$\exc \C_M= h_{-1}-h_{-3}+1.$

\emph{Case 2.1: $\E$ unstable.} If $\E$ is unstable with index of instability $k$, then  by \cite{FFP21} the multiplication map for a general line $\times \ell: [M]_{i-1}\to [M]_{i}$, is  bijective for $-(k+1) \leq i \leq k-1$. This implies 
 $h_{-k-2}=h_{-k-1}=\dots =h_{k-1}$, and in particular $h_{-3}=h_{-1}$ since $k>0$
when $\E$ unstable with $c_1(\E)=0$. So also in this case $\exc \C_M=1$.

\emph{Case 2.2: $\E$ semistable.} Let us now assume that $\E$ is semistable with $c_1(\E)=0$. Applying Serre Duality we have
\begin{align*}
	\exc  (\C_M ) =& \ h_{-1}-h_{-3}+1 = h^1(\PP^2,\E(-1))-h^1(\PP^2,\E(-3))+1\\
 =&\ h^1(\PP^2,\E(-2))-h^1(\PP^2,\E)+1\\
	=&\ -\chi(\E(-2))+h^0 (\PP^2,\E(-2))+h^2 (\PP^2,\E(-2))\\&+\chi (\E )-h^0 (\PP^2,\E)-h^2 (\PP^2,\E )+1\\
	=&\ \chi (\E)-\chi (\E(-2))-h^0 (\PP^2,\E )+1,
\end{align*}
since by semicontinuity $h^0 (\PP^2,\E(-2))=0$ as well as
\begin{align*}
  h^2 (\PP^2,\E(-2))=h^0 (\PP^2,\E(-1))=0\\
  h^2 (\PP^2,\E )=h^0 (\PP^2,\E(-3))=0.
\end{align*}
We can see that
$$\chi(\E(i))-\chi(\E(i-2))=\begin{cases}
		4i+2  & \text{ if }  c_1(\E)=0; \\
		4i &\text{ if }  c_1(\E)=-1,
	\end{cases}$$
 so $\chi (\E)-\chi (\E(-2))=2.$
If $\E$ is stable, then $h^0 (\PP^2,\E )=0$, while if $\E$ is semistable (and $M\neq0$) we show in the proof of Corollary \ref{jum3} that $h^0 (\PP^2,\E )=1$. Finally, we have
\begin{align*}
	\exc  (\C_M ) &= \chi (\E)-\chi (\E(-2))-h^0 (\PP^2,\E )+1= 3 -h^0 (\PP^2,\E )\\
 &=\begin{cases} 
	2& \text{ if } \E \text{ is semistable but not stable;}\\
	3& \text{ if } \E \text{ is stable.}
 \end{cases}
\end{align*}
\end{proof}

In the previous section, we saw that a general conic is a Lefschetz-conic, hence $\C_M\neq \PP^5$  and 
so $1\leq\cod (\C_M)\leq \exc (\C_M)$. This implies that if $\E$ is unstable or the first Chern class $c_1(\E)=-d$ is odd, the non-Lefschetz locus of conics of $M=\HH^1_*(\PP^2,\E)$ is always a hypersurface. In this case, the degree is given by $h_{\lfloor \frac{d-3}{2}\rfloor+1}$.
Note that by Theorem \ref{vit} and Corollary \ref{jum2} we already knew that for a semistable bundle $\E$ with first Chern class odd, the non-Lefschetz locus of conics is a hypersurface in $\PP^5$ of degree $c_2(\En)-1$.

It is left to study the non-Lefschetz locus of conics when $\E$ is semistable and $c_1(\E)=0$. By Theorem \ref{vit}  the set of jumping conics forms a hypersurface in $\PP^5$ of degree $c_2(\E)$, and by Corollary \ref{jum2} $\C_M$  is contained in this surface.

\begin{con}\label{conj}
For a general semistable vector bundle $\E$ with the first Chern class even, the non-Lefschetz locus has expected codimension  $$\cod\C_{M}=\begin{cases} 
	2& \text{ if } \E \text{ is semistable but not stable;}\\
	3& \text{ if } \E \text{  is stable.}\\
\end{cases}$$	
\end{con}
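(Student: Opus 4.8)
The plan is as follows. Since $\C_M$ is cut out scheme-theoretically by the maximal minors of a single matrix of linear forms (by Theorem~\ref{'mid} and its analogue for first cohomology modules used in Proposition~\ref{expcod}), one automatically has $\cod\C_M\le\exc\C_M$, and Proposition~\ref{LC} gives $\C_M\neq\PP^5$, so the content of the conjecture is the reverse inequality $\cod\C_M\ge\exc\C_M$ for a \emph{general} $\E$. I would normalise to $c_1(\E)=0$ (the odd case is already settled) and fix $c_2(\E)=n$, so that the ambient moduli space $\mathcal{M}_n$ is irreducible (Remark~\ref{moduli}); the semistable-but-not-stable locus $\mathcal{M}_n^{\,ss}\subset\mathcal{M}_n$ is irreducible as well, being dominated by the family of Serre constructions. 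The two cases of the conjecture call for genuinely different arguments.

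\emph{Semistable but not stable, $\exc=2$.} A general bundle here is $\E=\E_Z$, a general non-split extension $0\to\OO_{\PP^2}\to\E_Z\to\mathcal{I}_Z\to 0$ with $Z\subset\PP^2$ a general reduced set of $n$ points, and the plan is to compute $\C_{\E_Z}$ outright. First I would check that $\mathcal{T}or_1^{\OO_{\PP^2}}(\mathcal{I}_Z,\OO_C)=0$ for a smooth conic $C$, so that restricting the extension to $C$ exhibits $\OO_{\PP^1}(w)$ as a sub-line-bundle of $\E_Z|_C$ with quotient $\OO_{\PP^1}(-w)$, where $w=\#(Z\cap C)$; hence $\E_Z|_C\cong\OO_{\PP^1}(w)\oplus\OO_{\PP^1}(-w)$, and by Corollary~\ref{jum3} a smooth conic lies in $\C_{\E_Z}$ exactly when it passes through at least two points of $Z$. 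That locus is a union of $\binom n2$ codimension-$2$ linear subspaces $\PP^3\subset\PP^5$. For singular conics, Remark~\ref{singular} identifies $\C_{\E_Z}$ with $\{\ell_1\ell_2:\ell_1\text{ or }\ell_2\text{ a jumping line of }\E_Z\}$, and the jumping lines of $\E_Z$ are the $n$ pencils of lines through the points of $Z$, contributing a further codimension-$2$ piece. Thus $\cod\C_{\E_Z}=2$, and upper semicontinuity of $\dim\C_\E$ over the irreducible locus $\mathcal{M}_n^{\,ss}$ then yields $\cod\C_\E=2$ for a general semistable-but-not-stable $\E$.

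\emph{Stable, $\exc=3$.} This is the substantial case; a simple Serre construction is not general enough here (such bundles already have $\cod\C=2$), so I would argue with an incidence variety. Set $\mathcal{W}=\{(\E,C)\in\mathcal{M}_n\times\PP^5:C\in\C_\E\}$ and treat separately the parts over singular and over smooth conics. For singular conics, Hulek's theorem (Theorem~\ref{thmjumping}) says a general stable $\E$ with $c_1=0$ has only finitely many jumping lines, so by Remark~\ref{singular} its singular non-Lefschetz conics form a $2$-dimensional family and this part of $\mathcal{W}$ has dimension $\le\dim\mathcal{M}_n+2$. Over the smooth conics, I would fix $C\cong\PP^1$ and study the restriction morphism $\rho_C\colon\mathcal{M}_n\to\mathcal{B}_C$ to the moduli stack of rank-$2$ degree-$0$ bundles on $C$: from $0\to\mathcal{E}nd(\E)(-2)\to\mathcal{E}nd(\E)\to\mathcal{E}nd(\E)|_C\to 0$ and the vanishing $\HH^2(\PP^2,\mathcal{E}nd(\E)(-2))\cong\hom(\E,\E(-1))^{\vee}=0$, which holds for \emph{every} stable $\E$ with $c_1=0$ because a nonzero map $\E\to\E(-1)$ would destabilise $\E$, the differential of $\rho_C$ is surjective, so $\rho_C$ is smooth. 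By Corollary~\ref{jum3}, $C\in\C_\E$ precisely when $\rho_C(\E)$ lands in the stratum $\{\OO_{\PP^1}(a)\oplus\OO_{\PP^1}(-a):a\ge 2\}\subset\mathcal{B}_C$, which has codimension $2\cdot 2-1=3$; by smoothness of $\rho_C$ this forces $\cod\{\E:C\in\C_\E\}\ge 3$ in $\mathcal{M}_n$, uniformly in $C$, so the smooth part of $\mathcal{W}$ has dimension $\le 5+(\dim\mathcal{M}_n-3)=\dim\mathcal{M}_n+2$. Altogether $\dim\mathcal{W}\le\dim\mathcal{M}_n+2$, so projecting to $\mathcal{M}_n$ gives $\dim\C_\E\le 2$, i.e. $\cod\C_\E=3$, for a general stable $\E$.

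The hard part will be the stable case, and within it the claim that $\rho_C$ is a smooth morphism of stacks together with the computation that the ``doubly jumping'' stratum $\{a\ge 2\}\subset\mathcal{B}_C$ has codimension $3$ — this is precisely the step that manufactures the value $3$. I would expect to carry this out either through a careful relative deformation-theory argument, or by replacing $\rho_C$ with an explicit universal monad $0\to\OO_{\PP^2}(-1)^{n}\to\OO_{\PP^2}^{\,2n+2}\to\OO_{\PP^2}(1)^{n}\to 0$ presenting $\E$, restricting it to $C$, and showing that the determinantal condition cutting out $a\ge 2$ attains its expected codimension $3$ for a general monad. Two subsidiary points that also need care are: making the incidence-variety fibre-dimension estimates rigorous (flatness and cohomology-and-base-change for the restricted families), and, in the semistable case, checking that a general choice of the extension class does not disturb the identification $\E_Z|_C\cong\OO_{\PP^1}(w)\oplus\OO_{\PP^1}(-w)$.
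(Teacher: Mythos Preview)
This statement is a \emph{conjecture} in the paper; the paper does not prove it in general. Immediately after stating it the authors say only that they ``will prove this conjecture in the case when $\E$ is the syzygy bundle of a complete intersection,'' which is Theorem~\ref{main CI}. That proof proceeds by an entirely different, algebraic route: for each admissible Hilbert function one builds a specific Gorenstein quotient $R/J$ coming from a set of points $Z$ with the Uniform Position Property, identifies $\C_{R/J}$ with the locus of conics through at least $h_{\frac{e+1}{2}}-h_{\frac{e-1}{2}-1}+1$ points of $Z$, and then invokes irreducibility of the family of Gorenstein algebras (Diesel) plus semicontinuity. No deformation theory of bundles, no restriction maps to conics, no moduli stacks appear.

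Your strictly-semistable argument is essentially correct and, interestingly, is the vector-bundle translation of the paper's argument for the case $d_3=d_1+d_2$: in both pictures one ends up showing that $C\in\C$ iff $C$ meets $Z$ in at least two points. One small clean-up: after restricting $0\to\OO\to\E_Z\to\mathcal I_Z\to 0$ to $C$ you do not directly get $\OO_{\PP^1}(w)$ as a subbundle; you get $\OO_C\hookrightarrow\E_Z|_C$ with torsion cokernel, and it is the \emph{saturation} of $\OO_C$ that equals $\OO_{\PP^1}(w)$, the quotient then being $\mathcal I_Z|_C/\mathrm{tors}\cong\OO_{\PP^1}(-w)$. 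With that fix, the identification $\E_Z|_C\cong\OO_{\PP^1}(w)\oplus\OO_{\PP^1}(-w)$ holds for \emph{every} locally free extension, so your last caveat about the extension class is unnecessary.

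Your stable-case strategy goes well beyond what the paper attempts. The dimension count is right: the Shatz/Atiyah--Bott codimension of the stratum $\{a\ge 2\}$ in $\mathrm{Bun}_2^0(\PP^1)$ is $2\cdot 2-1=3$, and $H^2(\PP^2,\mathcal End(\E)(-2))\cong\Hom(\E,\E(-1))^\vee=0$ for stable $\E$ does give surjectivity of $d\rho_C$. The genuine work you still owe is foundational rather than conceptual: you must run the argument on the moduli \emph{stack} (there is no universal bundle on the coarse space when $c_1$ is even), make precise what ``codimension $3$'' means for the non-separated Artin stack $\mathcal B_C$, and justify that smoothness of $\rho_C$ over the stable locus really transports that codimension back. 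If you can pin those down, you would have proved the full conjecture, not merely the complete-intersection case treated in the paper.
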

We will prove this conjecture in the case when $\E$ is the syzygy bundle of a complete intersection. We will also show that the hypothesis of generality is necessary, using examples of monomial complete intersections.

\section{General complete intersections of height $3$}

In this section, we focus on Artinian complete intersections.
Let $A=\frac{\kk[x_1,x_2,x_3]}{(f_1,f_2,f_3)}$ be a complete intersection\footnote{We say that a complete intersection $A=\frac{\kk[x_1,x_2,x_3]}{(f_1,f_2,f_3)}$ has \emph{type} $(d_1,d_2,d_3)$, if $\deg f_i=d_i$ for $i=1,2,3$. We will always assume that $2\leq d_1\leq d_2\leq d_3$. In this case, $A$ is an algebra of height (or codimension) $3$. Throughout the paper, we will use the term \emph{height} when referring to an algebra, reserving the term \emph{codimension} for geometric settings.} of type $(d_1,d_2,d_3)$, and let $\E$ be its first syzygy bundle.
Our goal is to prove Conjecture \ref{conj} for $\E$. Recall that $A\cong \HH_*^1(\PP^2, \E)$. Moreover, $\E$ has first Chern class odd if and only if the socle degree $e$ is even, and
\begin{itemize}
    \item $\E$  is stable if $d_3< d_1+d_2$;
    \item $\E$ is semistable if $d_3\leq d_1+d_2$;
    \item $\E$ is unstable if $d_3> d_1+d_2.$
\end{itemize}
 
\begin{teo}\label{main CI}
	If $A=R/(f_1,f_2,f_3)$ is a general complete intersection of type $(d_1,d_2,d_3)$, then the non-Lefschetz locus of conics has the expected codimension in $\PP^5$:
	$$\cod \C_{A}=\begin{cases} 1& \text{ if } e \text{ is even;}\\
		1& \text{ if } d_3> d_1+d_2;\\
		2& \text{ if } d_3=d_1+d_2;\\
		3& \text{ if } e \text{ is odd and }d_3\leq d_1+d_2-2.\\
	\end{cases}$$
\end{teo}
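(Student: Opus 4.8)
The plan is to reduce the problem to the vector-bundle setting and then to the single case that is not already handled by earlier results. Write $\E$ for the syzygy bundle of $A$, so that $A\cong\HH^1_*(\PP^2,\E)$ and $\C_A=\C_M$ for $M=\HH^1_*(\PP^2,\E)$. By Proposition \ref{expcod} the expected codimension of $\C_M$ is $1$ when $e$ is even (odd Chern class) or $d_3>d_1+d_2$ ($\E$ unstable), $2$ when $d_3=d_1+d_2$ ($\E$ semistable but not stable), and $3$ when $e$ is odd and $d_3\le d_1+d_2-2$ ($\E$ stable with $c_1=0$). Since a general conic is a Lefschetz conic (Proposition \ref{LC}, Corollary \ref{SLP2}), we have $1\le\cod\C_A\le\exc\C_A$ always, so the cases $\exc\C_A=1$ are immediate. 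It remains to prove that the expected codimension is attained in the two cases $\exc\C_A=2$ and $\exc\C_A=3$, i.e. when $\E$ is semistable with $c_1(\E)=0$.

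For this, the key tool is semicontinuity: the codimension of $\C_A$ (being the codimension of the locus where the rank of the middle-degree multiplication matrix $B_{\lfloor e/2\rfloor-1}$ drops) cannot decrease under specialization. Since a general complete intersection of type $(d_1,d_2,d_3)$ is a specialization of a general Gorenstein algebra with the same Hilbert function, and since the Hilbert function here is the (unimodal, symmetric) one forced by the SLP of a monomial complete intersection, it suffices to exhibit \emph{one} Gorenstein Artinian algebra $R/J$ with the correct Hilbert function whose non-Lefschetz locus of conics has codimension exactly equal to the expected value. This is the content of \S 6.1 referenced in the introduction. Concretely, one constructs $R/J$ by hand — for instance by choosing $J$ so that the middle multiplication-by-conic map has the maximal possible drop in rank forced only along the expected-codimension locus — and then verifies the Hilbert function and the codimension claim, the latter by analyzing the determinantal ideal $I(\C_{R/J,\lfloor e/2\rfloor-1})$ directly, e.g. showing its zero locus is cut out by the expected number of independent conditions (with the degree bound $\binom{h_{i+2}}{h_i-1}$ from \cite{MIGLIORE} serving as a sanity check on genericity).

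The genuinely hard case — and the main obstacle — is $e$ odd, $d_3\le d_1+d_2-2$, where the expected codimension is $3$: here one must show the non-Lefschetz locus is not merely codimension $1$ or $2$ but exactly $3$, which requires producing a Gorenstein example in which the rank drop of $B$ is as small as possible and cuts out a genuine codimension-$3$ subscheme of $\PP^5$. By Corollary \ref{jum3}, a smooth conic $C$ is non-Lefschetz exactly when $\E_{|C}\cong\OO_{\PP^1}(a)\oplus\OO_{\PP^1}(-a)$ with $a>1$, i.e. when the restriction of $\E$ jumps \emph{by at least two}; so the codimension-$3$ claim amounts to showing that for the explicit $\E$ the locus of conics with splitting gap $\ge 4$ has codimension $3$ in $\PP^5$. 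I would handle this by the construction in \S 6.2: work with conics not passing through any point of the finite scheme $V(f_1,f_2)$, for which the restriction $\E_{|C}$ can be computed from the Koszul data, and show that the vanishing of the relevant section imposes three independent conditions. Once the explicit example gives codimension $3$ there (and codimension $2$ in the $d_3=d_1+d_2$ case), semicontinuity combined with the universal upper bound $\cod\C_A\le\exc\C_A$ forces equality for the general complete intersection, completing the proof.
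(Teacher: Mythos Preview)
Your overall scaffolding is correct: the cases $\exc\C_A=1$ are immediate from $1\le\cod\C_A\le\exc\C_A$, and for the remaining cases one wants to produce a single Gorenstein algebra with the right Hilbert function and the expected codimension, then invoke semicontinuity in the irreducible family of height-$3$ Gorenstein algebras (via \cite{Diesel}, where complete intersections are dense). The gap is that you never actually construct the example; ``choose $J$ so that the multiplication map has the maximal possible drop in rank only along the expected-codimension locus'' is a restatement of the goal, not a method. The paper's construction is concrete and quite different from what you sketch: one first checks that the $g$-vector of a complete intersection is of decreasing type, then takes $R/J$ to be a Gorenstein algebra that \emph{comes from points}, i.e.\ a quotient of $R/I_Z$ for a reduced zero-dimensional scheme $Z$ with the truncated Hilbert function, where $Z$ can be taken to have the Uniform Position Property by \cite{MR}. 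Under the identification $[R/J]_j=[R/I_Z]_j$ for $j\le\frac{e+1}{2}$, injectivity of $\times C$ in the middle degree reduces to a point count: $C$ is non-Lefschetz iff it passes through at least $h_{\frac{e+1}{2}}-h_{\frac{e-1}{2}-1}+1$ points of $Z$, which is $2$ when $d_3=d_1+d_2$ and $3$ when $d_3\le d_1+d_2-2$. This immediately gives the codimension.

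Your proposed route through \S 6.2 and Corollary \ref{jum3} is a misreading of the paper's structure. The computation in \S 6.2 concerns only those non-Lefschetz conics that avoid the points of $V(f_1,f_2)$; it is a supplementary result (restricted moreover to $d_3\le d_1+d_2-4$) and does not by itself bound the codimension of the full locus $\C_A$, since the conics through points of $V(f_1,f_2)$ could a priori contribute a lower-codimension component. Likewise, the splitting-type description of non-Lefschetz conics from Corollary \ref{jum3} is not used in the proof of Theorem \ref{main CI}; turning ``splitting gap $\ge 4$'' into a codimension-$3$ statement would require an independent argument you have not supplied.
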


Note that when either the vector bundle $\E$ is unstable or its first Chern class is odd, we know that the expected codimension is achieved and $\C_A$ is a hypersurface. This happens when either the socle degree $e$ is even or $d_3> d_1+d_2$. Therefore, in this case, the non-Lefschetz locus of conics is a hypersurface of degree $h_{\lfloor\frac{e}{2}\rfloor-1}$.

By this reasoning, we can restrict to the case when the socle degree $e$ is odd and $d_3\leq d_1+d_2$. Equivalently, the first syzygy bundle $\E$ is semistable with the first Chern class even.

Let $(1,3,h_2,\dots,h_{e-2},3,1)$ be the Hilbert function of a complete intersection of type $(d_1,d_2,d_3)$. We will construct a Gorenstein algebra $R/J$ with the same Hilbert function $$(1,3,h_2,\dots,h_{e-2},3,1)$$ such that the non-Lefschetz locus of conics has expected codimension.
We know that when the Hilbert function is fixed, the height $3$ Gorenstein algebras with such Hilbert function lie in a flat family \cite{Diesel}. Then, by semicontinuity, we can conclude that the general Gorenstein algebra with that Hilbert function has non-Lefschetz locus of expected codimension.
Moreover, from \cite{Diesel} we know the Gorenstein algebras with the minimum number of generators and Hilbert function $(1,3,h_2,\dots,h_{e-2},3,1)$ form a Zariski dense set in the family of Gorenstein algebras with this Hilbert function. 
In our case, we know that  $(1,3,h_2,\dots,h_{e-2},3,1)$ is the Hilbert function of a complete intersection, so in this family of Gorenstein algebras the ones with the minimum number of generators must be the complete intersections. Since this set is dense, we can conclude by semicontinuity that the general complete intersection has the non-Lefschetz locus of conics of expected codimension, assuming that we have constructed a Gorenstein algebra with the same Hilbert function and non-Lefschetz locus of conics of expected codimension.

\subsection{Proof of Theorem \for{toc}{\ref*{main CI}}\except{toc}{\ref{main CI}}}

Before proceeding with the proof of Theorem \ref{main CI}, we need to recall some definitions and results about Gorenstein algebras of height $3$. Recall that a sequence $(1,3,h_2,\dots,h_{e})$ is a Hilbert function for a height $3$ Gorenstein algebra if and only if it is a \emph{SI-sequence}.
\begin{df}
	A height $3$ \emph{SI-sequence} is a sequence of positive integers $(1,3,h_2,\dots,h_{e})$ such that \begin{itemize}
		\item it is symmetric;
		\item the first difference $(1,2,h_2-h_1,\dots,h_{\lfloor \frac{e}{2}\rfloor}-h_{\lfloor \frac{e}{2}\rfloor-1})$ satisfies Macaulay's growth condition.
	\end{itemize}
\end{df}
The second condition is equivalent to the statement that  $$(1,3,h_2,\dots,h_{\lfloor \frac{e}{2}\rfloor}, h_{\lfloor \frac{e}{2}\rfloor}, \dots, h_{\lfloor \frac{e}{2}\rfloor}, \dots )$$ is the Hilbert function of zero-dimensional subscheme $Z$ of $\PP^2$.

For a fixed SI-sequence, the family of Gorenstein algebras $R/I$ having that sequence as a Hilbert function is an irreducible family by \cite{Diesel}.

\begin{df}
    Let $(1,3,h_2,\dots,h_{e})$ be the Hilbert function of a Gorenstein algebra $A$. The \emph{g-vector} of $A$ is the positive part of the first difference: $$(1,2,g_2,\dots,g_{\lfloor \frac{e}{2}\rfloor})=(1,2,h_2-h_1,\dots,h_{\lfloor \frac{e}{2}\rfloor}-h_{\lfloor \frac{e}{2}\rfloor-1}).$$
    We say that the g-vector $(1,2,g_2,\dots,g_{\lfloor \frac{e}{2}\rfloor})$ is of \emph{decreasing type} if it begins with $(1,2,3,\dots)$, then is possibly constant, then is strictly decreasing.
\end{df}

\begin{df}
	We say that the Gorenstein algebra $A$ with Hilbert function $(1,3,h_2,\dots,h_e)$, \emph{comes from points} if it is a quotient of $R/I_Z$  where $I_Z$ is the ideal associated to a  reduced zero-dimensional scheme $Z$ with Hilbert function $$(1,3,h_2,\dots,h_{\lfloor \frac{e}{2}\rfloor},h_{\lfloor \frac{e}{2}\rfloor} \dots).$$
\end{df}

For any SI-sequence, there is always a subfamily of Gorenstein algebras with that sequence as a Hilbert function which comes from points by \cite{BOIJ}.
\begin{rmk}\label{points}
	A general complete intersection $R/(f_1,f_2,f_3)$ comes from points if and only if $d_3\geq d_1+d_2-1$. In this case, $[R/(f_1,f_2,f_3)]_j= [R/(f_1,f_2)]_j$ for $j<\frac{e-1}{2}$ and the Hilbert function of $R/(f_1,f_2)$ stabilizes at $d_1+d_2-2\leq \frac{e-1}{2}$. 
\end{rmk} 

The last definition that we need to recall is a strong form of general position for sets of points $Z$ in $\PP^2$:
\begin{df}
    Let $Z$ be a set of points in $\PP^2$. We say that $Z$ has the \emph{Uniform Position Property (UPP)} if, for any $n\leq \deg Z$, all subsets of $n$ points have the same Hilbert function. 
\end{df}
As a consequence, if a set of points $Z$ in $\PP^2$ has the UPP, and $Y$ is a subset of $n$ points of $Z$, we have that the Hilbert function $h_i(Y)=\dim[R/I_Y]_i$ of $Y$ must be the truncation of the Hilbert function of $Z$: $$h_i(Y)=\min\{h_i(Z), n\},$$ where $h_i(Z)=\dim[R/I_Z]_i$ is the Hilbert function of $Z$.

Now we can proceed with the proof of Theorem \ref{main CI}: we want to show that the non-Lefschetz locus of conics $\C_A$ of a general complete intersection $A=R/(f_1,f_2,f_3)$ of type $(d_1,d_2,d_3)$ has expected codimension. The only case left to prove is when the socle degree $e$ is odd and $d_3\leq d_1+d_2$.

\begin{proof}[Proof of Theorem \ref{main CI}] 
Let $A=R/(f_1,f_2,f_3)$ be a complete intersection of type $(d_1,d_2,d_3)$, where $e$ is odd and $d_3\leq d_1+d_2$. Let $(1,3,h_2,\dots,h_{e-2},3,1)$ be the Hilbert function of $A$.
By Proposition \ref{expcod},  the expected codimension of $\C_A$ is
\begin{itemize}
    \item $2$ when $d_3=d_1+d_2$ (this corresponds to the case when the syzygy bundle $\E$ is semistable but not stable);
    \item $3$ otherwise (when the syzygy bundle $\E$ is semistable with $c_1(\E)$ even).
\end{itemize}

First, we show that the g-vector is always of decreasing type. Assume by contradiction that
$$(1,2,g_2,\dots,g_{\lfloor \frac{e}{2}\rfloor})=(1,2,h_2-h_1,\dots,h_{\lfloor \frac{e}{2}\rfloor}-h_{\lfloor \frac{e}{2}\rfloor-1})$$
is not of decreasing type. Then there exists $i<\frac{e-1}{2}$ such that $g_{i-2}>g_{i-1}=g_i$. Since $g_{i-2}>g_{i-1}$, we have that $d_1, d_2 \leq i$. Now $g_{i-1}=g_i$ so by \cite[Theorem 3.1]{RZ} all the generators of degree $\leq i$ have a common factor of degree $g_i$. So $f_1$ and $f_2$ have a common factor, but this is not possible since $(f_1,f_2,f_3)$ is a complete intersection.

We will now construct a Gorenstein algebra $R/J$ with the same Hilbert function of a complete intersection of type $(d_1,d_2,d_3)$ such that the non-Lefschetz locus of conics has expected codimension. Let $R/J$ be an Artinian Gorenstein algebra with  Hilbert function $$(1,3,h_2,\dots,h_{e-2},3,1)$$  that comes from points. So $R/J$ is obtained as a quotient of $R/I_Z$ where $Z$ is a reduced zero-dimensional scheme in $\PP^2$.
Therefore $[R/I_Z]_i=[R/J]_i$ for every $i\leq \frac{e-1}{2}$; moreover, since $h_\frac{e-1}{2}=h_ \frac{e+1}{2}$ and the Hilbert function of $I_Z$ stabilizes at $ \frac{e-1}{2}$, we also have that 
$[R/I_Z]_\frac{e+1}{2}=[R/J]_\frac{e+1}{2}$.
Since we showed that the positive first difference of  $(1,3,h_2,\dots,h_{e-2},3,1)$ is of decreasing type, we can assume $Z$  satisfies the UPP by \cite{MR}.

By Theorem \ref{'mid}, $C_{R/J}$ is ``concentrated'' in the middle degree, i.e. $\C_{R/J}= \C_{{R/J},\frac{e-1}{2}-1}.$ Then under the identification

 \[ \begin{tikzcd}
\left[R/J\right]_{\frac{e-1}{2}-1} \arrow{r}{\times C}\arrow[equal]{d}
& \left[R/J\right]_{\frac{e+1}{2}} \arrow[equal]{d}\\
\left[R/I_Z\right]_{\frac{e-1}{2}-1} \arrow{r}{\times C}&\left[R/I_Z\right]_{\frac{e+1}{2}}
\end{tikzcd}
\]
$C$ is a non-Lefschetz conic for $R/J$ if and only if $\times C: [R/I_Z]_{\frac{e-1}{2}-1}\to [R/I_Z]_{\frac{e+1}{2}}$ is not injective.
If $C$ does not pass through any of the points of $Z$, then $C$ is a non-zero divisor and so the multiplication map is injective. Let us assume that $C$ passes through at least one point of $Z$ and let $I_Y=(I_Z:C)$ be the ideal associated to the set of points $Y$ of $Z$ that $C$ does not pass through. Then the map $\times C: [R/I_Z]_{\frac{e-1}{2}-1}\to [R/I_Z]_{\frac{e+1}{2}}$ is injective if and only if $[I_Y/I_Z]_{\frac{e-1}{2}-1}=0$.
We want to check whether the Hilbert function of $I_Y$ and $I_Z$ are equal in degree $\frac{e-1}{2}-1$.

Since we assumed $Z$ has the UPP, the Hilbert function of  $I_Y$ depends only on the number of points of $I_Y$, and so it must be the truncated Hilbert function of $Z$; in particular
$$\dim[R/I_Y]_{\frac{e-1}{2}-1}=\min\{\dim[R/I_Z]_{\frac{e-1}{2}-1}, n\}.$$

\emph{Case 1: $d_3= d_2+d_1$.} In this case $\exc \C_A =2$, hence $h_{\frac{e-1}{2}-1}-h_{\frac{e+1}{2}}=1$.
Since  $$\dim([R/I_Z]_{\frac{e-1}{2}})-\dim([R/I_Z]_{\frac{e-1}{2}-1})=h_{\frac{e-1}{2}-1}-h_{\frac{e-1}{2}}=h_{\frac{e-1}{2}-1}-h_{\frac{e+1}{2}}=1,$$
if $C$ meets just one of the points of $Z$ then $[I_Y/I_Z]_{{\frac{e-1}{2}-1}}=0$. If $C$ passes through exactly $2$ points of $Z$, then $\dim [I_Y]_{{\frac{e-1}{2}-1}}=\dim [I_Z]_{{\frac{e-1}{2}-1}}-1$ and so the map $\times C$ is not injective.
We can conclude that $C\in \C_{R/J}$ if and only if $C$ passes through at least $2$ points of $Z$.
Then $\cod \C_{R/J}=2$ as we wanted.

\emph{Case 2: $d_3\leq d_2+d_1-2$.}   Here $\exc \C_A =3$, hence $h_{\frac{e-1}{2}-1}-h_{\frac{e+1}{2}}=2$.
In this case $$\dim([R/I_Z]_{\frac{e-1}{2}})-\dim([R/I_Z]_{\frac{e-1}{2}-1})=h_{\frac{e-1}{2}-1}-h_{\frac{e-1}{2}}=h_{\frac{e-1}{2}-1}-h_{\frac{e+1}{2}}=2,$$ and so
  $\dim [I_Y]_{\frac{e-1}{2}-1}\neq\dim [I_Z]_{\frac{e-1}{2}-1}$ if $C$ passes through at least $3$ points of $Z$. 
 We can conclude that $\cod \C_{R/J}=3$ as expected. 
 
 By semicontinuity, the non-Lefschetz locus of a general Gorenstein ideal with Hilbert function  $(1,3,h_2,\dots,h_{e-2},3,1)$ has codimension $3$. In particular, for a general complete intersection $I$ of type $(d_1,d_2,d_3)$, the non-Lefschetz locus has expected codimension.  
\end{proof}

\subsection{Note about the case \for{toc}{$d_3\leq d_1+d_2-4$}\except{toc}{\boldmath$d_3\leq d_1+d_2-4$} and odd socle degree}

In Remark \ref{points} we notice that for $d_3\geq d_1+d_2-1$, the Artinian complete intersection $A=R/(f_1,f_2,f_3)$ comes from points. Moreover, if $Z$ is the set of points defined as a zero-dimensional scheme by the ideal $(f_1,f_2)$, we have that 
$[A]_{j}=[R/(f_1,f_2)]_{j}$ for any $j<d_3$. Since $d_3\geq d_1+d_2-1$,
$$\floor*{ \frac{e}{2}}+1\geq \frac{d_3+d_3-2}{2}+1=d_3-1.$$

By Theorem \ref{'mid}, $\C_{A}= \C_{{A},\lfloor \frac{e}{2}\rfloor-1},$ so $C$ is a non-Lefschetz conic if and only if
the map
 \[ \begin{tikzcd}
\left[R/(f_1,f_2,f_3)\right]_{\lfloor \frac{e}{2}\rfloor-1} \arrow{r}{\times C}\arrow[equal]{d}
& \left[R/(f_1,f_2,f_3)\right]_{\lfloor \frac{e}{2}\rfloor+1} \arrow[equal]{d}\\
\left[R/(f_1,f_2)\right]_{\lfloor \frac{e}{2}\rfloor-1} \arrow{r}{\times C}&\left[R/(f_1,f_2)\right]_{\lfloor \frac{e}{2}\rfloor+1}
\end{tikzcd}
\]
 is not injective. Then a conic $C$ that is in the non-Lefschetz locus of conics of $A=R/(f_1,f_2,f_3)$ needs necessarily to vanish at least at one of the points of $Z$ (this condition is not necessarily sufficient, as we can see in the case $d_3=d_1+d_2$). 
 
 This is not true in general. In fact, the subset of the non-Lefschetz locus of conics that do not vanish at any point of $Z$ for a general complete intersection $A=R/(f_1,f_2,f_3)$ with $d_3\leq d_1+d_2-4$ and $e$ odd has codimension  $3$ in $\PP^5$. This agrees with the codimension of the entire scheme $\C_A$ by Theorem \ref{main CI}. 

\begin{prop}
	Let $A=R/(f_1,f_2,f_3)$ be a general complete intersection with $d_3\leq d_1+d_2-4$ and $e$ odd.
	Assume $Z$ is the set of points defined as a zero-dimensional scheme by the ideal $(f_1,f_2)$.
	The set of conics $C$  in the non-Lefschetz locus of conics $\C_{A}$ that do not vanish at any of the points of $Z$ has codimension $3$ in $\PP^5$.
\end{prop}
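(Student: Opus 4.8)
The plan is to reduce the statement to a transversality/dimension count for conics that avoid the points of $Z$ but nevertheless fail injectivity of multiplication in the middle degree. First I would set $m=\lfloor \frac{e}{2}\rfloor-1$ and recall that by Theorem \ref{'mid} a conic $C$ is non-Lefschetz for $A$ precisely when $\times C\colon [A]_m\to [A]_{m+2}$ fails to be injective; and since $d_3\le d_1+d_2-4\le d_1+d_2-1$, by Remark \ref{points} we may identify $[A]_j=[R/(f_1,f_2)]_j$ for all $j$ in the relevant range, so that the condition becomes failure of injectivity of $\times C\colon [R/I_Z]_m\to [R/I_Z]_{m+2}$. A conic $C$ not vanishing at any point of $Z$ is a nonzerodivisor on $R/I_Z$, hence multiplication by $C$ on $R/I_Z$ is globally injective; the failure of injectivity at the graded piece $[R/I_Z]_m$ is therefore a purely numerical obstruction coming from the grading — it happens iff $C\cdot [R/I_Z]_m$ is not all of its natural image, i.e. iff the image has dimension strictly less than $h_m$, equivalently iff $C\cdot [I_Z^{\le}]$-type containments fail. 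Concretely, $\times C$ on the quotient is injective iff $\dim(C\cdot R_m + I_Z)_{m+2} = \dim R_{m+2} - h_{m+2} + h_m$ does not drop, so the non-Lefschetz condition (for such $C$) is $\dim [I_Z + C R_m]_{m+2} < \dim[I_Z]_{m+2} + h_m$, i.e. the multiplication map $[R/I_Z]_m \xrightarrow{\times C} [R/I_Z]_{m+2}$ has a kernel.

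Next I would rephrase this kernel condition in terms of liaison or, more elementarily, in terms of the $h_{m+2}\times h_m$ matrix $B_m$ of linear forms in $S=k[a_1,\dots,a_6]$ defining $\C_{A,m}$: restricting to the open subset $U\subset\PP^5$ of conics not meeting $Z$, the locus we want is $\C_A\cap U$, cut out by the maximal minors of $B_m$. Its expected codimension is $h_{m+2}-h_m+1 = \exc\C_A = 3$ by Proposition \ref{expcod} (this is Case 2, $d_3\le d_1+d_2-2$). So what must be shown is that on $U$ this determinantal locus actually has codimension $3$, not less. For this I would exhibit one conic $C_0\in U$ at which the multiplication map has maximal rank (which already follows from $\C_A\ne\PP^5$, Proposition \ref{LC}, after checking the witness conic can be taken off $Z$ — a general conic works), so that $\C_A\cap U\ne U$; then the standard bound $\cod \le \exc$ gives codimension at most $3$, and the reverse inequality $\cod\ge 3$ on $U$ is the real content.

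To get $\cod(\C_A\cap U)\ge 3$, the approach is to stratify $U$ by how a conic $C$ interacts with the scheme $Z$ at the level of the truncated Hilbert function: since $Z$ has the UPP (available because the $g$-vector is of decreasing type, as shown in the proof of Theorem \ref{main CI}, via \cite{MR}), and since $C$ avoids $Z$, the only way a kernel appears is through a secant-type degeneracy of $Z$ relative to the pencil cut by $C$ — that is, through the residuation $I_Y=(I_Z:H)$ for $H$ a line component of a degenerate conic. For a general complete intersection I would argue that the conditions forcing the kernel impose three independent conditions on the six coefficients of $C$: this is exactly parallel to Case 2 in the proof of Theorem \ref{main CI}, where passing through at least three points of a UPP point set $Z$ is three conditions — except here we must instead track the "second-type" degeneracy that survives when $C$ is disjoint from $Z$. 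The hardest step, and the one I would spend the most care on, is showing these three conditions remain independent after restricting to $U$ — i.e. that the determinantal locus does not acquire an extra component of codimension $1$ or $2$ supported entirely on conics disjoint from $Z$; I expect to handle this by a semicontinuity argument from the explicitly constructed Gorenstein algebra $R/J$ coming from points (as in Theorem \ref{main CI}), where the UPP makes the dimension count on $U$ exact, and then transferring to the general complete intersection by the density of complete intersections in the flat family with that Hilbert function.
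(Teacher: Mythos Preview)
There is a genuine and fatal gap in your first reduction step. You invoke Remark~\ref{points} to claim that, because $d_3\le d_1+d_2-4\le d_1+d_2-1$, you may identify $[A]_j=[R/(f_1,f_2)]_j=[R/I_Z]_j$ in the relevant degrees. But Remark~\ref{points} says the opposite: the complete intersection comes from points (and the identification $[A]_j=[R/(f_1,f_2)]_j$ holds up to degree $\lfloor e/2\rfloor+1$) precisely when $d_3\ge d_1+d_2-1$. Under the hypothesis $d_3\le d_1+d_2-4$ one has $d_3\le\frac{e-1}{2}-1$, so $f_3$ already contributes in degrees well below the middle and the identification fails. Indeed, if your identification were valid, a conic $C$ avoiding $Z$ would be a nonzerodivisor on $R/I_Z$ and $\times C$ would be injective in every degree --- the locus $\C_A\cap U$ would be empty, not of codimension~$3$. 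Your subsequent attempt to find a ``purely numerical obstruction coming from the grading'' is trying to rescue an identification that simply does not hold; there is no such obstruction when multiplication is globally injective on a graded module. The UPP/semicontinuity argument you sketch at the end also cannot repair this: the Gorenstein algebra $R/J$ built in Theorem~\ref{main CI} uses a \emph{different} point set $Z'$ (with $h_{\frac{e-1}{2}}$ points, not $d_1d_2$), and its non-Lefschetz conics are exactly those meeting $Z'$ --- so it gives no information about conics avoiding the complete-intersection locus $Z$ of $(f_1,f_2)$.

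The paper's proof proceeds along a completely different line. Since $C$ avoids $Z$, the ideal $(C,f_1,f_2)$ is a complete intersection of type $(2,d_1,d_2)$, so $(C,f_1,f_2,f_3)$ is an almost complete intersection; $C$ is non-Lefschetz iff $[R/(C,f_1,f_2,f_3)]_{\frac{e+1}{2}+1}\ne 0$, which forces a specific Hilbert function on this almost complete intersection. The codimension is then obtained by comparing, via liaison through $(2,d_1,d_2)$, the dimensions of two families of Gorenstein algebras whose $h$-vectors differ only in the middle degrees, invoking \cite[Lemma~5.5 and Theorem~5.6]{main}. The numerical bookkeeping $(D_1-D_1')+(D_2-D_2')-(D_3-D_3')$ then yields~$3$. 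None of this liaison/parameter-count machinery appears in your proposal.
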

\begin{proof}
	Let $A=R/(f_1,f_2,f_3)$ be a general complete intersection and let $Z$ be the set of points defined as a zero-dimensional scheme by the ideal $(f_1,f_2)$. We consider just the set of conics $C$ that do not pass through any of the points of $Z$, or equivalently, the conics such that $(C,f_1,f_2)$ is a complete intersection. As before $(1,3,h_2,\dots,h_{e-2},3,1)$ is the Hilbert function of $R/I$. Here, we are also assuming that the socle degree $e$ is odd.

	To check if a conic $C$ is a Lefschetz conic it is enough to verify whether the multiplication map $$\times C: [A]_{\frac{e-1}{2}}\to [A]_{\frac{e+1}{2}+1}$$ is surjective, or equivalently if   $[R/(C,f_1,f_2,f_3)]_{\frac{e+1}{2}+1}$ is zero.
	A general conic $C$ is a Lefschetz conic, so $[R/(C,f_1,f_2,f_3)]_{\frac{e+1}{2}+1}=0$. In this case $(C,f_1,f_2,f_3)$ is an almost complete intersection of type $(2,d_1,d_2,d_3)$ with Hilbert function $$(1,3,h_2-1,h_3-h_1,\dots,h_{\frac{e-1}{2}}-h_{\frac{e-1}{2}-2},h_{\frac{e+1}{2}}-h_{\frac{e-1}{2}-1}).$$
	We want to look now at the conics $C\in \C_{R/I}$, so that $[R/(C,f_1,f_2,f_3)]_{\frac{e+1}{2}+1}\neq0$.
	Since we are just considering conics that don't vanish at any point of $Z$, the ideal $(C,f_1,f_2)$ is a complete intersection. To compute the codimension it is enough to consider $C$ such that $(C,f_1,f_2,f_3)$ is an almost complete intersection whose Hilbert function differs from the general case in the least possible way. Note that if $$\dim\left[\frac{R}{(C,f_1,f_2,f_3)}\right]_{\frac{e+1}{2}+1}=1,$$ by duality we also have $$\dim \ker \left(\times C: \left[\frac{R}{(C,f_1,f_2,f_3)}\right]_{\frac{e-1}{2}-1}\to \left[\frac{R}{(C,f_1,f_2,f_3)}\right]_{\frac{e+1}{2}}\right)=1,$$ and $\dim[R/(C,f_1,f_2,f_3)]_{\frac{e+1}{2}}=h_{\frac{e+1}{2}}-h_{\frac{e-1}{2}-1}+1$. Then we can assume that the Hilbert function of $R/(C,f_1,f_2,f_3)$ is  
	$$(1,3,h_2-1,h_3-h_1,\dots,h_{\frac{e-1}{2}}-h_{\frac{e-1}{2}-2},h_{\frac{e+1}{2}}-h_{\frac{e-1}{2}-1}+1,1).$$
	We want to compute the codimension of the almost complete intersection of type $(2,d_1,d_2,d_3)$ in $R$ with Hilbert function  $$(1,3,h_2-1,h_3-h_1,\dots,h_{\frac{e+1}{2}}-h_{\frac{e-1}{2}-1}+1,1)$$
	in the space of all almost complete intersections of type $(2,d_1,d_2,d_3)$.
	A general almost complete intersection $J$  of type $(2,d_1,d_2,d_3)$ has Hilbert function $$(1,3,h_2-1,h_3-h_1,\dots,h_{\frac{e+1}{2}}-h_{\frac{e-1}{2}-1}).$$
	We can link $J$ by a complete intersection $K$ of
	type $(2,d_1, d_2)$ to a Gorenstein ideal $G$ with socle degree $s = d_1 + d_2  - d_3 - 1$ and
	h-vector $(1, f_1, \dots , f_s)$. In a similar way, we link a complete intersection $J'$  with Hilbert function  $(1,3,h_2-1,h_3-h_1,\dots,h_{\frac{e+1}{2}}-h_{\frac{e-1}{2}-1}+1,1)$ by a complete intersection $K$ of type $(2,d_1, d_2)$ to a Gorenstein ideal $G'$ with socle degree $s$ and h-vector $(1, f'_1, \dots, f'_s )$ that differs from the one of $G$ only in the middle degrees. In fact, since the Hilbert function of $J$ and $J'$ differ by one in degree $\frac{e}{2}$ and $\frac{e}{2}+1$ and are equal otherwise, we can relate the Hilbert function of $G$ and $G'$ as follows:
	\[\begin{cases}
		f'_i=f_i-1 &\text{ if } i=\frac{s-1}{2} \text{ or } i=\frac{s+1}{2};\\
		f'_i=f_i &\text{ otherwise.}
	\end{cases}\]

	We follow a similar method to the proof of \cite[Theorem 5.6]{main}. Using the same notation we denote by
	\begin{enumerate}
		\item $D_1$, resp. $D_1'$, the dimension of the family of Gorenstein Artinian ideals $G$, resp. $G'$, with Hilbert function $(1, f_1, \dots, f_s )$, $(1, f'_1, \dots, f'_s )$ respectively;
		\item $D_2$, resp. $D_2'$, the dimension of complete intersections $K$ of type $(2,d_1, d_2)$ contained in $G$, $G'$ respectively; 
		\item $D_3$, resp. $D_3'$, the dimension of complete intersections $K$ of type $(2, d_1, d_2)$ contained in $J$, $J'$ respectively.
	\end{enumerate}
	Then the codimension of the almost complete intersection that we are looking at is exactly   
	$ D_1+D_2-D_3 -( D'_1+D'_2-D'_3)$.
	The reason why we subtract $D_3$ ($D'_3$ respectively) is to remove over-counting, since the same ideal $J$ ($J'$)
	can be reached from many different ideals $G$ ($G'$) using different complete intersections in $J$ ($J'$).

	 Since the Hilbert function $J$ and $J'$ are different only in degrees $\frac{e+1}{2}$ and $\frac{e-1}{2}+1$, $D_3\neq D_3'$ only in the case when one of the degrees $2$, $d_1$, $d_2$ is equal to $\frac{e+1}{2}$ or $\frac{e+1}{2}+1$. But this is not possible because 
	$d_3\leq d_1+d_2-4$, therefore $2\leq d_1\leq d_2 \leq d_3 < \frac{e+1}{2}$. Hence, $D_3=D_3'$. 
	
	Similarly, $D_2\neq D'_2$ only when one of the degrees $2$, $d_1$, $d_2$ is equal to $\frac{s\pm1}{2}$, since the Hilbert functions of $G$ and $G'$ differ only in the middle degrees.
	This can happen if and only if $d_3= d_1+d_2-4$ or $d_3= d_1+d_2-6$. In both cases $D_2-D'_2=-1$.
	
	Finally, to compute $D_1-D'_1$,  we can apply \cite[Lemma 5.5]{main}: $G$ and $G'$ are Gorenstein algebras of height $3$ with odd socle degree and Hilbert functions that differ by one only in the middle degree. Thus 
	$D_1-D'_1=f_{\frac{s-1}{2}+1}-2f_{\frac{s-1}{2}+3} +f_{\frac{s-1}{2}+4}+1$.
	Recall that we obtained the Gorenstein Artinian ideal $G$ from $J$ linking by a complete intersection $K$ of type $(2,f_1,f_2)$.
	Let $(1,3,\tilde{h}_2,...,\tilde{h}_{d_1+d_2-1},3,1)$ be the Hilbert function of $K$, with socle degree $d_1+d_2-1$.
	Using the property of linkage and symmetry we have that 
	$$f_{\frac{s-1}{2}+1+j}=\tilde{h}_{\frac{d_1+d_2+d_3}{2}+j}=\tilde{h}_{\frac{d_1+d_2-d_3-2}{2}-j}=\binom{\frac{d_1+d_2-d_3-2}{2}-j+2}{2}-\binom{\frac{d_1+d_2-d_3-2}{2}-j}{2},$$
	where the last equality uses the fact that $\frac{d_1+d_2-d_3-2}{2}\leq d_1$ and in our case $j=0,2,3$.
	After some numerical computation, we obtain that 
	$$D_1-D'_1=f_{\frac{s-1}{2}+1}-2f_{\frac{s-1}{2}+3} +{\frac{s-1}{2}+4}+1=\begin{cases}
		4 &\text{ if } d_3=d_1+d_2-4 \\
  &\ \ \ \text{or } d_3=d_1+d_2-6;\\
		3 &\text{ if } d_3\leq d_1+d_2-8.
	\end{cases}$$
	We can conclude that the codimension of the conics $C$  in the non-Lefschetz locus of conics $\C_{A}$ that do not vanish at any of the points of $Z$ in $\PP^5$ is
	\begin{align*}
		&(D_1-D'_1)+(D_2-D'_2)-(D_3-D'_3)=\\ &\begin{cases}
			4 &\text{if } d_3=d_1+d_2-4\\
   &\ \ \ \text{or } d_3=d_1+d_2-6;\\
			3 &\text{if } d_3\leq d_1+d_2-8; 
		\end{cases} + 
	\begin{cases}
		-1 &\text{if } d_3=d_1+d_2-4\\
  &\ \ \ \text{or } d_3=d_1+d_2-6;\\
		\ 0 &\text{if } d_3\leq d_1+d_2-8.
	\end{cases}
	\end{align*}
	In all cases, this is $3,$ as required. 
\end{proof}

\section{Examples with Monomial Complete Intersections}
In this section, we show that the hypothesis of generality in Theorem \ref{main CI} is necessary by constructing examples of monomial complete intersections where the non-Lefschetz locus of conics does not have the expected codimension.
We already know that for every Artinian monomial complete intersection $A=\kk[x_1,x_2,x_3]/(x_1^{d_1}, x_2^{d_2}, x_3^{d_3})$  the non-Lefschetz locus of conics is a hypersurface in $\PP^5$ if $d_3>d_1+d_2+1$ or if the socle degree $e=d_1+d_2+d_3-3$ is odd. Therefore, in this section we focus on the case when the socle degree $e$ is even and $d_3\leq d_1+d_2$.

First, if $d_3= d_1+d_2$, then $\C_A$ has expected codimension $2$, but this codimension is never achieved by a monomial complete intersection:
\begin{prop}
    The non-Lefschetz locus of conics of an Artinian monomial complete intersection $A=\kk[x_1,x_2,x_3]/(x_1^{d_1}, x_2^{d_2}, x_3^{d_3})$ with $d_3=d_1+d_2$ has codimension $1$ in $\PP^5$.
\end{prop}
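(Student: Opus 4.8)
The plan is to show that $\C_A$ coincides set-theoretically with the hyperplane of conics vanishing at the coordinate point $P=[0:0:1]$, so that $\cod\C_A=1$ even though the expected codimension is $2$.

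First I would reduce to the middle degree. An Artinian monomial complete intersection has the SLP, hence the WLP, and is Gorenstein, so Theorem \ref{'mid} applies: since $d_3=d_1+d_2$ the socle degree is $e=d_1+d_2+d_3-3=2d_3-3$, which is odd, and $\C_A=\C_{A,\lfloor e/2\rfloor-1}=\C_{A,d_3-3}$. Thus $[C]\in\C_A$ exactly when $\times C\colon [A]_{d_3-3}\to[A]_{d_3-1}$ fails to have maximal rank. Set $B=R/(x_1^{d_1},x_2^{d_2})$. Because $d_3-1<d_3=\deg x_3^{d_3}$, the ideals $(x_1^{d_1},x_2^{d_2},x_3^{d_3})$ and $(x_1^{d_1},x_2^{d_2})$ agree in every degree $\le d_3-1$, so $[A]_{d_3-3}=[B]_{d_3-3}$, $[A]_{d_3-1}=[B]_{d_3-1}$, and the relevant map is the multiplication map on $B$. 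The key structural point is the graded isomorphism $B\cong\bar B[x_3]$ with $\bar B=k[x_1,x_2]/(x_1^{d_1},x_2^{d_2})$, an Artinian complete intersection (hence Gorenstein) of socle degree $D:=d_1+d_2-2=d_3-2$; its Hilbert function is symmetric, so (using $d_1,d_2\ge 2$) $\dim[\bar B]_{D-1}=\dim[\bar B]_1=2$ and $\dim[\bar B]_D=1$. In particular $\dim[B]_{d_3-3}=d_1d_2-1<d_1d_2=\dim[B]_{d_3-1}$, so maximal rank here means injective.

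Then I would write $C=a_6x_3^2+\ell x_3+Q$ with $\ell=a_3x_1+a_5x_2$ and $Q=a_1x_1^2+a_2x_1x_2+a_4x_2^2$, regarded as a polynomial in $x_3$ over $\bar B$. If $a_6\neq 0$ then $C$ is, up to the unit $a_6$, monic in $x_3$ over $\bar B$, hence a non-zerodivisor on $B=\bar B[x_3]$ (equivalently $C\notin(x_1,x_2)$, the unique associated prime of the $1$-dimensional Cohen--Macaulay ring $B$), so $\times C$ is injective in all degrees and $[C]\notin\C_A$. If instead $a_6=0$, I restrict $\times C$ to the subspace $[\bar B]_{D-1}\subseteq[B]_{d_3-3}$ of elements involving no $x_3$: for $g\in[\bar B]_{D-1}$ one has $Cg=x_3(\ell g)+Qg$ with $\ell g\in[\bar B]_D$ and $Qg\in[\bar B]_{D+1}=0$, so $Cg\in x_3[\bar B]_D$, a $1$-dimensional space; a linear map from the $2$-dimensional $[\bar B]_{D-1}$ into a $1$-dimensional space has nonzero kernel, so $\times C$ is not injective and $[C]\in\C_A$ (explicitly, take any nonzero $g$ with $\ell g=0$ in $[\bar B]_D$). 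Combining the two inclusions gives $\C_A=\{a_6=0\}$, a hyperplane, and therefore $\cod\C_A=1$; since the general conic with $a_6\neq 0$ is Lefschetz, $\C_A\neq\PP^5$, confirming the codimension is exactly $1$. There is essentially no computation once the decomposition $B=\bar B[x_3]$ is in place; the only thing requiring care is the bookkeeping of which generators and degrees are relevant (together with the harmless standing assumption $d_1,d_2\ge 2$, the case $d_i=1$ reducing to fewer variables), and the one conceptual step is recognizing that a conic through $P$ forces multiplication out of $[\bar B]_{D-1}$ to land in the socle direction $x_3\cdot[\bar B]_D$, which is too small to receive an injection.
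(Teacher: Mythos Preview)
Your proof is correct and follows essentially the same approach as the paper: reduce to the middle degree via Theorem~\ref{'mid}, pass from $A$ to $B=R/(x_1^{d_1},x_2^{d_2})$ in degrees $\le d_3-1$, and show that $\times C$ is injective on $[B]_{d_3-3}$ iff $a_6\neq 0$. The only cosmetic difference is that the paper exhibits explicit kernel elements when $a_6=0$, whereas you give a cleaner dimension count by observing that $\times C$ sends the $2$-dimensional subspace $[\bar B]_{D-1}$ into the $1$-dimensional space $x_3[\bar B]_D$.
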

\begin{proof}
   As in \emph{Case 2.2} of Theorem \ref{main CI}, $C$ is a Lefschetz conic if and only if the multiplication map  $\times C: [A]_{\frac{e-1}{2}-1}\to [A]_{\frac{e+1}{2}}$ is injective. 
Since $\frac{e+1}{2}=d_1+d_2-1=d_3-1<d_3$, it is equivalent to check when the map  $\times C: [R/(x_1^{d_1},x_2^{d_2})]_{d_1+d_2-3}\to [R/(x_1^{d_1},x_2^{d_2})]_{d_1+d_2-1}$ is injective.  We can see that $\times C$ is not injective if and only if $a_6=0$, where $$C=a_1x_1^2+ a_2x_1x_2+ a_3x_1x_3+ a_4x_2^2+ a_5x_2x_3 + a_6x_3^2.$$
Clearly if $a_6\neq 0$, then $C$ is not a zero-divisor in $R/(x_1^{d_1},x_2^{d_2})$ and $\times C$ must be injective. If $a_6=0$,  we can check that if $a_3\neq 0$ then
$$x_1^{d_1-1}x_2^{d_2-2}-\frac{a_5}{a_3}x_1^{d_1-2}x_2^{d_2-1}\stackrel{\times c}\mapsto 0,$$
while if $a_3=0$ we have
$$x_1^{d_1-2}x_2^{d_2-1}\stackrel{\times c}\mapsto 0.$$
So $C=a_1x_1^2+ \dots + a_6x_3^2\in \C_A$ if and only if $a_6=0$, and $\cod \C_A=1$.
\end{proof}
This proposition gives us that if $d_1+d_2=d_3$, then the non-Lefschetz locus of conics is defined as a set by the ideal $(a_6)$. However, as a subscheme of $\PP^5$, it may not be the case that $\C_A$ is reduced or unmixed, as we can see in the following example.

\begin{ex}\label{ex semi}
	Using \texttt{Macaulay2} \cite{M2} we obtain that the non-Lefschetz locus of conics of the monomial complete intersection  
 $$A=\kk[x_1,x_2,x_3]/(x_1^2, x_2^2, x_3^4)$$
 is defined by the ideal 
 $$I(C_A)=\left(a_{6}^{3},a_{5}a_{6}^{2},-a_{3}a_{6}^{2},-2a_{3}a_{5}a_{6}+a_{2}a_{6}^{2}\right).$$
 While the expected codimension is $2$ in this case, in agreement with the previous proposition,
  $\cod \C_A=1$ and $\sqrt{I(\C_A)}=(a_6)$.
 This ideal is saturated, but is not radical and not unmixed:  the primary decomposition 
 $$\left\{\left( a_{6}\right),\left(a_{5},a_{6}^{2}\right),\left(a_{6}^{2},a_{3}a_{6},a_{3}^{2}\right),\left(a_{5}^{2},a_{3}^{2},a_{6}^{3},a_{5}a_{6}^{2},a_{3}a_{6}^{2},2a_{3}a_{5}a_{6}-a_{2}a_{6}^{2}\right)\right\}$$
 has components of respective codimensions $1$, $2$, $2$ and $3$.
\end{ex}

Let us consider  now monomial complete intersections $$\frac{\kk[x_1,x_2,x_3]}{(x_1^{d_1}, x_2^{d_2}, x_3^{d_3})}$$
with $d_3> d_1+d_2$ and even socle degree $e$. In this case, the expected codimension is $3$, but examples show that all possible codimensions are achieved.  
\begin{ex}
	For the monomial complete intersection $A=R/(x_1^2,x_2^2,x_3^2)$, the non-Lefschetz locus of conics is given by the ideal generated by the coefficients of the square-free monomial of $C$:
 $$I(\C_A)=\left(a_5, a_3, a_2\right),$$
 so it has the expected codimension $\cod\mathcal{C}_A=3$.
 \end{ex}

\begin{ex}\label{ex sta1}
 For $A=\kk[x_1,x_2,x_3]/(x_1^3, x_2^3, x_3^4)$,  we have that $\cod \C_A=2$.
 It is interesting to note that $I(\C_A)$ is neither saturated nor unmixed: we can show using \texttt{Macaulay2} \cite{M2} that the primary components have codimension respectively $2,3,3,3,3,3,3,3,$ and $6$. The last component is Artinian, and does not correspond to any geometric component. Unlike Example \ref{ex semi}, the radical is also not unmixed, with the first component having codimension $2$ and the rest having codimension $3$:
 
\tiny{\begin{align*}
   \sqrt{I(\C_A)}=&(a_{6},\,a_{3}^{2}a_{4}-a_{2}a_{3}a_{5}+a_{1}a_{5}^{2})\cap(a_{5},\,a_{4},\,a_{2})\cap (\,a_{5},\,a_{3},\,a_{2}^{2}+2\,a_{1}a_{4})\cap(a_{3},\,a_{2},\,a_{1})\cap\\
   &(3\,a_{3}a_{5}^{2}-2\,a_{3}a_{4}a_{6}-2\,a_{2}a_{5}a_{6},\,12\,a_{1}a_{5}^{2}+a_{2}^{2}a_{6}-10\,a_{1}a_{4}a_{6},\,2\,a_{3}a_{4}a_{5}-a_{2}a_{5}^{2}-6\,a_{2}a_{4}a_{6},
   3\,a_{3}^{2}a_{5}-2\,a_{2}a_{3}a_{6}-2\,a_{1}a_{5}a_{6},\\
   &\,12\,a_{2}a_{3}a_{5}-7\,a_{2}^{2}a_{6}-2\,a_{1}a_{4}a_{6},\,12\,a_{3}^{2}a_{4}+a_{2}^{2}a_{6}-10\,a_{1}a_{4}a_{6},\,8\,a_{2}a_{3}a_{4}+a_{2}^{2}a_{5}-2\,a_{1}a_{4}a_{5},\,a_{2}a_{3}^{2}-2\,a_{1}a_{3}a_{5}+6\,a_{1}a_{2}a_{6},\\
   &a_{2}^{2}a_{3}-2\,a_{1}a_{3}a_{4}+8\,a_{1}a_{2}a_{5},\,a_{5}^{4}+4\,a_{4}a_{5}^{2}a_{6}-4\,a_{4}^{2}a_{6}^{2},\,a_{3}^{4}+4\,a_{1}a_{3}^{2}a_{6}-4\,a_{1}^{2}a_{6}^{2},\,a_{2}^{4}-68\,a_{1}a_{2}^{2}a_{4}+4\,a_{1}^{2}a_{4}^{2}).
\end{align*}}

 \end{ex}
 \begin{ex}\label{ex sta2}
 Finally, if $A=R/(x_1^4,x_2^4,x_3^6)$, then the non-Lefschetz locus of conics has $\cod\mathcal{C}_A=1$.
 Also in this case, the ideal is neither saturated nor unmixed. Note that even if $\cod\mathcal{C}_A=1$ and it is contained in the hypersurface parametrizing the jumping conics, these do not coincide.
 Using \texttt{Macaulay2} \cite{M2}  we can show that $ \sqrt{I(\C_A)}$ has one primary component of codimension $1$, the ideal $(a_6)$, and all other components have codimension $3$.
\end{ex}

\section{General Gorenstein Algebras}
The construction of a Gorenstein algebra with the non-Lefschetz locus of expected codimension in the proof of Theorem \ref{main CI} suggests that it may be possible to generalize the result to general Gorenstein algebras. 
 We saw that the Gorenstein algebras $R/I$ with fixed  Hilbert function form an irreducible family by \cite{Diesel}, so by ``general Gorenstein algebra'' we refer to a general element in this family. 

In this section we want to compute the codimension for the non-Lefschetz locus of conics $\C_A$ of a general Gorenstein algebra, fixing the Hilbert function. We will show that the condition on the g-vector is necessary to get the expected codimension if $h_{\lfloor \frac{e}{2}\rfloor-1}\neq h_{\lfloor \frac{e}{2}\rfloor+1}$.

By Proposition \ref{'mid}, we know $C_A=\C_{A,\lfloor\frac{e}{2}\rfloor-1}$, so the non-Lefschetz locus of conics has expected codimension $$\exc \C_{A} = \min \{ h_{\lfloor \frac{e}{2}\rfloor+1}-h_{\lfloor \frac{e}{2}\rfloor-1}+1, 6\}.$$
By \cite[Proposition 3.2]{AAISY},
we know that a general Gorenstein algebra has the Strong Lefschetz Property; in particular, there exists a linear form $\ell$ such that $\times \ell^2: [A]_i\to [A]_{i+2}$ has maximum rank for each $i$, and so $\ell^2\notin\C_A$. As a consequence, we know that for a general Gorenstein algebra $A$ we have $\cod \C_A\geq 1$.
If follows that, if $h_{\lfloor \frac{e}{2}\rfloor-1}=h_{\lfloor \frac{e}{2}\rfloor+1},$ then the non-Lefschetz locus must have expected codimension.
\begin{rmk}\label{even}
    the above observations imply that if $A$ is a general Gorenstein algebra with Hilbert function $(1,3,h_2,\dots,h_{e})$ and even socle degree $e$, then the non-Lefschetz locus always has the expected codimension: it is a hypersurface in $\PP^5$ of degree $h_{\frac{e}{2}-1}$.
\end{rmk}

\begin{prop}\label{genGo}
	Let $(1,3,h_2,\dots,h_{e})$ be an SI-sequence such that the positive part of its first difference $(1,2,g_2,\dots,g_{\lfloor \frac{e}{2}\rfloor})$ is of decreasing type.
	Then for a general Gorenstein algebra $A$ with Hilbert function $(1,3,h_2,\dots,h_{e}),$ the non-Lefschetz locus of conics has expected codimension in $\PP^5$
	$$\cod \C_{A} = \min \{ h_{\lfloor \frac{e}{2}\rfloor+1}-h_{\lfloor \frac{e}{2}\rfloor-1}+1, 6\}.$$
\end{prop}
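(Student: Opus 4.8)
The plan is to run the argument of the proof of Theorem~\ref{main CI}, which in the present setting is actually lighter, since the decreasing-type shape of the g-vector is now a hypothesis rather than something one must deduce. First reduce to the case $e$ odd: if $e$ is even then $h_{\lfloor e/2\rfloor+1}=h_{e/2+1}=h_{e/2-1}=h_{\lfloor e/2\rfloor-1}$ by symmetry of the Hilbert function, $\exc\C_A=1$, and Remark~\ref{even} already gives that for a general Gorenstein algebra with this Hilbert function the non-Lefschetz locus of conics is a hypersurface. So assume $e$ is odd, and write $m=\lfloor e/2\rfloor=\frac{e-1}{2}$ and $g_m=h_m-h_{m-1}$ for the last entry of the g-vector; since $h_{m+1}=h_m$, the standard determinantal count gives $\exc\C_A=\min\{g_m+1,6\}$.

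Next, reduce to a single member of the family. By \cite{Diesel} the Gorenstein quotients of $R$ with the fixed Hilbert function $(1,3,h_2,\dots,h_e)$ form an irreducible family $T$. For $A\in T$ the ideal $I(\C_{A,m-1})$ is generated by the maximal minors of an $h_{m+1}\times h_{m-1}$ matrix $B_{m-1}(A)$ of linear forms whose entries depend regularly on $A$, so $W=\{(A,[C])\in T\times\PP^5:\operatorname{rank}B_{m-1}(A)([C])<h_{m-1}\}$ is closed, $W\to T$ is proper, and $A\mapsto\dim\C_{A,m-1}$ is upper semicontinuous; hence $A\mapsto\cod\C_{A,m-1}$ is lower semicontinuous on $T$ and attains its maximum (which is at most $\exc\C_A$, the determinantal bound) on a dense open subset. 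A general $A$ has the SLP by \cite[Proposition~3.2]{AAISY}, hence the WLP, so $\C_A=\C_{A,m-1}$ by Theorem~\ref{'mid}. Therefore it suffices to exhibit one algebra $A_0\in T$ with $\cod\C_{A_0,m-1}=\min\{g_m+1,6\}$, and semicontinuity then forces the equality for the general member.

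For $A_0$ take a Gorenstein algebra with the prescribed Hilbert function that comes from points: $A_0=R/J$ with $J\supseteq I_Z$, where $Z$ is a reduced zero-dimensional scheme of Hilbert function $(1,3,h_2,\dots,h_m,h_m,h_m,\dots)$; such an $A_0$ exists by \cite{BOIJ}, and since the g-vector is of decreasing type we may in addition take $Z$ with the Uniform Position Property by \cite{MR}. Because $J$ agrees with $I_Z$ in all degrees $\le m$, and also in degree $m+1$ (the Hilbert function of $I_Z$ has stabilized by degree $m$ and $h_{m+1}=h_m$), the map $\times C:[A_0]_{m-1}\to[A_0]_{m+1}$ is identified with $\times C:[R/I_Z]_{m-1}\to[R/I_Z]_{m+1}$, and since $h_{m-1}\le h_m$ maximal rank here means injectivity. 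If $C$ misses $Z$ it is a non-zerodivisor on $R/I_Z$ and the map is injective; otherwise, letting $Y\subseteq Z$ be the residual set of points off $C$ with $n=|Y|$, the kernel in degree $m-1$ equals $[I_Y/I_Z]_{m-1}$, of dimension $h_{m-1}-h_{m-1}(Y)=h_{m-1}-\min\{h_{m-1},n\}$ by the UPP. Hence the map fails injectivity exactly when $n<h_{m-1}$, i.e. exactly when $C$ passes through at least $|Z|-h_{m-1}+1=g_m+1$ points of $Z$, so $\C_{A_0,m-1}=\bigcup_{S\subseteq Z,\,|S|=g_m+1}\{C\in\PP^5:S\subseteq V(C)\}$. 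The decreasing-type shape forces $h_2(Z)=6$ as soon as $g_m\ge 4$, so by the UPP every such $S$ of size $\le 5$ imposes independent conditions on conics; consequently $\cod\C_{A_0,m-1}=g_m+1$ when $g_m\le 4$, while $\C_{A_0,m-1}=\varnothing$ (codimension $6$) when $g_m\ge 5$. In both cases $\cod\C_{A_0,m-1}=\min\{g_m+1,6\}=\exc\C_A$, and by the semicontinuity above the same holds for a general $A\in T$.

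The one step that genuinely uses the hypothesis — and hence is the crux — is the appeal to \cite{MR} for a scheme $Z$ with the Uniform Position Property: this is precisely what pins the Hilbert function of the residual scheme $Y$ to the truncation of that of $Z$ and makes the count above sharp. Dropping the decreasing-type assumption one loses this control, and $C$ meeting $Z$ in a single point can already destroy injectivity, so $\C_A$ collapses to a hypersurface — in agreement with the remark at the end of the section. Everything else (the identification in degree $m+1$, the determinantal codimension bound, and the small-socle-degree checks) is routine and mirrors the proof of Theorem~\ref{main CI}.
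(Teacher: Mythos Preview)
Your proof is correct and follows essentially the same approach as the paper: reduce to odd $e$, use semicontinuity to pass to a single Gorenstein algebra coming from a UPP set of points (via \cite{BOIJ} and \cite{MR}), identify the middle-degree map with multiplication on $R/I_Z$, and count how many points of $Z$ a non-Lefschetz conic must meet. If anything you are more careful than the paper---spelling out the semicontinuity via an incidence variety and working with $\C_{A_0,m-1}$ rather than $\C_{A_0}$ so that Theorem~\ref{'mid} is invoked only for the general $A$, where WLP is guaranteed.
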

\begin{proof}
    If $e$ is even, then $\C_A$ has expected codimension by Remark \ref{even}, so we can assume $e$ is odd.
    
    By semicontinuity, it is enough to construct a Gorenstein algebra $A$ with Hilbert function $(1,3,h_2,\dots,h_{e})$ and non-Lefschetz locus of conics with the expected codimension, and we can proceed exactly as in the proof of Theorem \ref{main CI}.
    
	Let $R/J$ be a Gorenstein algebra with Hilbert function $(1,3,h_2,\dots,h_{e})$ that comes from points, so that it is a quotient of $R/I_Z$, where $I_Z$ is an ideal associated to a set of points  $Z$ with Hilbert function $$(1,3,h_2,\dots,h_{\frac{e-1}{2}},h_{\frac{e-1}{2}}\dots ).$$ Since the first difference of the Hilbert function $(1,2,g_2,\dots,g_{\lfloor \frac{e}{2}\rfloor})$ is of decreasing type, we can assume $Z$ has the UPP by \cite{MR}.
    Then a conic $C$ is a Lefschetz conic for $R/I$ if and only if  $$\times C: [R/I_Z]_{\frac{e-1}{2}-1}\to [R/I_Z]_{\frac{e+1}{2}}$$ is injective.
    With the same notation as in the proof of Theorem \ref{main CI}, let $I_Y=(I_Z:C)$ be the ideal associated to the set of points $Y$ of $Z$ that $C$ does not pass through. The map $\times C$ is injective if and only if $[I_Y/I_Z]_{\frac{e-1}{2}-1}=0$. Since $Z$ has the UPP, this happens if and only if $C$ passes through at least
 $$h_{\frac{e-1}{2}}-h_{\frac{e-1}{2}-1}+1=h_{\frac{e-1}{2}}-h_{\frac{e+1}{2}-1}+1$$
 points of $Z$. Since $C$ is a conic, it cannot vanish at more than $5$ points of $Z$.
 So we obtain that 
 $$\cod \C_{R/I} = \min \{ h_{ \frac{e+1}{2}}-h_{\frac{e-1}{2}-1}+1, 6\}.$$
\end{proof}

Without the condition on the g-vector, the expected codimension is not necessarily achieved. In fact, if the g-vector of $A$ is not of decreasing type, then $\cod \C_A=1$. The proof proceeds in the same way that has been done for lines in \cite{main}, and we include it for completeness.

\begin{prop}
Let $A$ be a general Gorenstein algebra $A$ with Hilbert function $(1,3,\dots, h_e)$ such that the g-vector of $A$ is not of decreasing type. Then the
non-Lefschetz locus of conics has codimension $1$ in $\PP^5$.
\end{prop}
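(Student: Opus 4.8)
The plan is to transcribe the corresponding argument for linear forms from \cite{main}. First, the case of even socle degree is immediate: if $e$ is even, then symmetry of the Hilbert function gives $h_{e/2-1}=h_{e/2+1}$, so by Proposition~\ref{'mid} the matrix $B_{e/2-1}$ defining $\C_A$ is square; a general Gorenstein algebra has the SLP by \cite[Proposition 3.2]{AAISY}, hence the SLP at range $2$, so $\C_A\neq\PP^5$, $\C_A$ is a hypersurface, and $\cod\C_A=1$. So assume $e$ is odd and set $m=\frac{e-1}{2}$; by Proposition~\ref{'mid}, $\C_A=\C_{A,m-1}$, so $C$ is a non-Lefschetz conic precisely when $\times C\colon[A]_{m-1}\to[A]_{m+1}$ fails to be injective. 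The bound $\cod\C_A\ge 1$ has already been recorded, again because a general Gorenstein algebra has the SLP. It remains to exhibit, for the general $A$, a hypersurface of non-Lefschetz conics.

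Because the $g$-vector $(1,2,g_2,\dots,g_m)$ is not of decreasing type, it acquires a second flat stretch: there is an index $t\le m$ with $g_{t-1}=g_t=:d$ lying outside the single plateau permitted for a sequence of decreasing type, so that $g$ has strictly decreased before degree $t-1$. Exactly as in the complete-intersection step of the proof of Theorem~\ref{main CI}, \cite[Theorem 3.1]{RZ} then forces every minimal generator of the Gorenstein ideal $I$ of degree $\le t$ to be divisible by a single common form $F$ of degree $d$ --- this time without contradiction, since $A$ is not a complete intersection. Passing to a points model as in the proofs of Theorem~\ref{main CI} and Proposition~\ref{genGo}, the general zero-dimensional scheme $Z$ with the prescribed $h$-vector then has all but boundedly many of its points on the curve $V(F)$, and in particular is very far from satisfying the UPP.

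Now one runs the Hilbert-function computation of Theorem~\ref{main CI} in this situation. As there, a conic $C$ is non-Lefschetz for $A$ iff $[I_Y/I_Z]_{m-1}\neq 0$, where $Y=Z\setminus V(C)$. When $Z$ has the UPP this requires $C$ to meet several points of $Z$ and gives the expected codimension of Proposition~\ref{genGo}; but here there is a point $p_0\in Z$ off the curve $V(F)$ whose mere removal already lowers $h_{m-1}(R/I_Z)$ --- a form of degree $\le m-1$ vanishing on more than $(m-1)\deg F$ points of $V(F)$ must contain $V(F)$, so every form of degree $\le m-1$ through $Z$ is a multiple of $F$, and deleting $p_0$ relaxes exactly one linear condition on the complementary factor. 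Hence for \emph{every} conic $C$ with $C(p_0)=0$ one has $Y\subseteq Z\setminus\{p_0\}$, so
\[
h_{m-1}(R/I_Y)\le h_{m-1}(R/I_{Z\setminus\{p_0\}})<h_{m-1}(R/I_Z)=h_{m-1},
\]
whence $[I_Y/I_Z]_{m-1}\neq 0$ and $C$ is a non-Lefschetz conic. The conics through the fixed point $p_0$ form a hyperplane in $\PP^5$, so $\cod\C_A\le 1$; combined with the lower bound, $\cod\C_A=1$.

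The one substantive point --- everything else being routine once it is in place --- is the geometric assertion that the flat spot $g_{t-1}=g_t$ forces the general $Z$ to have all but boundedly many of its points on a fixed curve and that deleting one of the exceptional points drops the Hilbert function in degree $m-1$. This is precisely the input that \cite{main} supplies in the linear case, and the argument here proceeds in the same way.
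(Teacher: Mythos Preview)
Your approach has a genuine gap: the semicontinuity argument runs the wrong way. In Theorem~\ref{main CI} and Proposition~\ref{genGo} the paper constructs a \emph{specific} Gorenstein algebra coming from points whose non-Lefschetz locus has the \emph{expected (large)} codimension, and semicontinuity then forces the general algebra to have at least that codimension. Here you want the opposite inequality --- $\cod\C_A\le 1$ --- for the general $A$, and exhibiting one algebra (or even the general algebra coming from points) with $\cod\C_A=1$ does not suffice: the locus $\{A:\cod\C_A\ge 2\}$ is open, so $\{A:\cod\C_A\le 1\}$ is closed, and you would need it to be the whole family. Nothing in your argument establishes that the points model is dense in the family of Gorenstein algebras with this Hilbert function, and this is not supplied by \cite{BOIJ} or \cite{Diesel}.

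The paper avoids this entirely by working \emph{directly} with the general $A$ and at a different degree. Instead of the middle degree $m-1$, it chooses $i<\lfloor e/2\rfloor$ with $g_{i-2}>g_{i-1}=g_i$ and analyzes $\times C:[A]_{i-2}\to[A]_i$. At this degree the common-factor structure from \cite{RZ} applies to $I$ itself (not to an auxiliary points ideal): one has $[A]_j=[R/(I_Z\cdot f)]_j$ for $j\le i$, where $f$ is the common factor of degree $g_i$ and $I_Z$ comes from $(I:f)$. The genericity of $A$ is used only to ensure $Z$ is reduced. Since the Hilbert function of $R/I_Z$ has already reached its multiplicity by degree $i-g_i-2$ (by \cite{Davis}), a conic through any single point of $Z$ already kills injectivity, giving $\cod\C_{A,i-2}=1$ directly for the general $A$ --- no semicontinuity needed. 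Your attempt to push the analysis to degree $m-1$ not only requires the problematic points detour, but also forces you into ad hoc B\'ezout estimates (which, as written, need $F$ irreducible and a count of points on $V(F)$ that you do not verify in general).
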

\begin{proof}
     Since the g-vector of $A$, $(1,2,g_2,\dots,g_{\lfloor \frac{e}{2}\rfloor})$,  is not of decreasing type, we can find $i<\lfloor \frac{e}{2}\rfloor$
    such that $g_{i-2}>g_{i-1}=g_i$. By \cite[Theorem 3.1]{RZ}, the generators of $I$ of degree $\leq i$ have a common factor $f$ and $\deg f=g_i$. Moreover, $g_{i-2}>g_{i-1}$ and $A$ is general, so the generators of degrees less than $i$ of $(I\colon f)$ span the ideal of a set of points $Z$ in $\PP^2$, that we can assume to be reduced \cite{RZ}.
    
    Let $C$ be a form of degree $2$ and consider the multiplication map $\times C: [A]_{i-2}\to [A]_i$. Since $i<\frac{e-1}{2}$, this map has maximal rank if and only if it is injective. Note that 
    $$[A]_j=[R/I]_j=[R/(I\colon f)\cdot f]_j=[R/(I_Z)\cdot f]_j$$ for any $j\leq i$. So we can consider the map $\times C: [R/(I_Z)\cdot f]_{i-2}\to [R/(I_Z)\cdot f]_i$. Let $Y$ be the set of points defined by the ideal $I_Z\colon C$. We have the sequence
    $$0 \to \left[\frac{I_Y\cdot f}{I_Z\cdot f}\right]_{i-2}\to \left[\frac{R}{I_Z\cdot f}\right]_{i-2}\stackrel{\times C}\to \left[\frac{R}{I_Z\cdot f}\right]_i$$
    and the map $\times C$ is injective if and only if $[\frac{I_Y\cdot f}{I_Z\cdot f}]_{i-2}=0$. Since the Hilbert function of $R/I_Z$ reaches its multiplicity at $i-g_i-2$ by \cite{Davis}, it is enough that $C$ passes through one of the points of $Z$ to get that $[\frac{I_Y\cdot f}{I_Z\cdot f}]_{i-2}\neq0$. So $\cod C_{A,i-2}=1$, and therefore $\cod C_A=1$.
\end{proof}
Without the hypothesis of generality, we do not know if there exists a Lefschetz conic, so the codimension of $\C_A$ could be zero. In fact, for Gorenstein algebras $\kk[x_1,x_2,x_3]/I$, even the WLP is an open question.

\bibliographystyle{amsalpha}
\bibliography{bibfile}

@article{main,
author = {Boij, Mats and Migliore, Juan and Mir{\'o}-Roig, Rosa M. and Nagel, Uwe},
 title = {The non-{Lefschetz} locus},
 journal = {Journal of Algebra},
 issn = {0021-8693},
 volume = {505},
 pages = {288--320},
 year = {2018},
 doi = {10.1016/j.jalgebra.2018.03.006},
 keywords = {13E10,13H10,13C40,13D40,13A02}
}

@article{26, 
title = {On complete intersections and their {Hilbert} functions},
journal={Canadian Mathematical Bulletin}, 
publisher={Cambridge University Press}, 
author={Reid, Les and Roberts, Leslie G. and Roitman, Moshe}, 
year={1991}, 
 issn = {0008-4395},
 volume = {34},
 number = {4},
 pages = {525--535},
 doi = {10.4153/CMB-1991-083-9},
 keywords = {13D40,14M10,13A02}
}

@article{13,
author = {Harima, Tadahito and Migliore, Juan and Nagel, Uwe and Watanabe, Junzo},
 title = {The weak and strong {Lefschetz} properties for {Artinian} {{\(K\)}}-algebras},
 journal = {Journal of Algebra},
 publisher	= {Elsevier Science},
 issn = {0021-8693},
 volume = {262},
 number = {1},
 pages = {99--126},
 year = {2003},
 doi = {10.1016/S0021-8693(03)00038-3},
 keywords = {13A02,13E10,13A30}
}

@article{28,
 author = {Stanley, Richard P.},
 title = {Weyl groups, the hard {Lefschetz} theorem, and the {Sperner} property},
 journal = {SIAM Journal on Algebraic and Discrete Methods},
 issn = {0196-5212},
 volume = {1},
 pages = {168--184},
 year = {1980},
 doi = {10.1137/0601021},
 keywords = {05A05,05A15,14L35,06A06,20G20}
}

@misc{29,
 author = {Watanabe, Junzo},
 title = {The {Dilworth} number of {Artinian} rings and finite posets with rank function},
 year = {1987},
 howpublished = {Commutative algebra and combinatorics, {US}-{Jap}. {Joint} {Semin}., {Kyoto}/{Jap}. 1985, {Adv}. {Stud}. {Pure} {Math}. 11, 303-312 (1987).},
 keywords = {13E10,13E15,13H99}
}

@article{FFP21,
title = {On the weak {L}efschetz property for vector bundles on $\mathbb{P}^2$},
author = {Failla, Gioia and Flores, Zachary and Peterson, Chris},
 journal = {Journal of Algebra},
 issn = {0021-8693},
 volume = {568},
 pages = {22--34},
 year = {2021},
 doi = {10.1016/j.jalgebra.2020.10.005},
 keywords = {13E10,13F20,13D02,14F06,14M12}
}

@ARTICLE{jump,
       author = {Klaus Hulek},
        title = "{Stable rank-2 vector bundles on $\mathbb{P}^2$ with $c_1$ odd}",
      journal = {Mathematische Annalen},
 issn = {0025-5831},
 volume = {242},
 pages = {241--266},
 year = {1979},
 doi = {10.1007/BF01420729},
 keywords = {32L10,14J60,14D20,14E08},
 url = {https://eudml.org/doc/163285},
}

@article{vb,
title = {The non-{L}efschetz locus of vector bundles of rank 2 over $\mathbb{P}^2$},
journal = {Journal of Algebra},
author = {Marangone, Emanuela},
 issn = {0021-8693},
 volume = {630},
 pages = {297--316},
 year = {2023},
 doi = {10.1016/j.jalgebra.2023.03.044},
 keywords = {13E10,13F20,14J60,13D02,13H10,13C40,13A02,14F06}
}

@article{MIGLIORE,
title = {Geometric invariants for liaison of space curves},
journal = {Journal of Algebra},
author = {Migliore, Juan},
ssn = {0021-8693},
 volume = {99},
 pages = {548--572},
 year = {1986},
 doi = {10.1016/0021-8693(86)90045-1},
 keywords = {14H45,14M05,14N05},
}

@book{OSS,
 author = {Okonek, Christian and Schneider, Michael and Spindler, Heinz},
 title = {Vector bundles on complex projective spaces},
 series = {Progress in Mathematics},
 issn = {0743-1643},
 volume = {3},
 year = {1980},
 publisher = {Birkh{\"a}user, Cham},
 keywords = {32L05,32-02,14-02,14F05,32G13,14D20,14-03,32-03,14D22,14M07,14M15,15A72,14L24}
}

@book{Hartshorne,
 author = {Hartshorne, Robin},
 title = {Algebraic geometry},
 series = {Graduate Texts in Mathematics},
 issn = {0072-5285},
 volume = {52},
 year = {1977},
 publisher = {Springer, Cham},
 keywords = {14Axx,14Fxx,14Hxx,14Jxx,14Exx,14Nxx,14-02}
}

@article{Diesel,
author = {Diesel, Susan J.},
 title = {Irreducibility and dimension theorems for families of height $3$ {Gorenstein} algebras},
 fjournal = {Pacific Journal of Mathematics},
 journal = {Pac. J. Math.},
 issn = {1945-5844},
 volume = {172},
 number = {2},
 pages = {365--397},
 year = {1996},
 doi = {10.2140/pjm.1996.172.365},
 keywords = {13F20,13D40,13H10}
}

@article{RZ,
   author = {Ragusa, Alfio and Zappal{\`a}, Giuseppe},
 title = {Properties of $3$-codimensional {Gorenstein} schemes},
 journal = {Communications in Algebra},
 issn = {0092-7872},
 volume = {29},
 number = {1},
 pages = {303--318},
 year = {2001},
 doi = {10.1081/AGB-100000802},
 keywords = {14M07,14C17,14M05}
}

@article{Vitter,
 author = {Vitter, Al},
 title = {Restricting semistable bundles on the projective plane to conics},
 journal = {Manuscripta Mathematica},
 volume = {114},
 number = {3},
 pages = {361--383},
 year = {2004},
 doi = {10.1007/s00229-004-0464-y},
 keywords = {14J60,14F05},
}

@book{24h,
 author = {Iyengar, Srikanth B. and Leuschke, Graham J. and Leykin, Anton and Miller, Claudia and Miller, Ezra and Singh, Anurag K. and Walther, Uli},
 title = {Twenty-four hours of local cohomology},
 series = {Graduate Studies in Mathematics},
 issn = {1065-7339},
 volume = {87},
 isbn = {978-0-8218-4126-6},
 year = {2007},
 publisher = {American Mathematical Society (AMS), Providence, RI},
 keywords = {13-02,13D45,13A35,13H10,13N10,14B15,13H05,13P10,13F55,14F40,55N30}
}

@article{BOIJ, 
author = {Boij, Mats},
 title = {Gorenstein {Artin} algebras and points in projective space},
 journal = {Bulletin of the London Mathematical Society},
 issn = {0024-6093},
 volume = {31},
 number = {1},
 pages = {11--16},
 year = {1999},
 doi = {10.1112/S0024609398004925},
 keywords = {13C14,13E10,13D02,13D40,14M05}
 }

@article{Davis, 
title={Complete intersections of codimension $2$ in $\mathbb{P}^r$: the {B}ezout–{J}acobi–{S}egre theorem re-visited}, 
author = {Davis, Edward D.},
journal = {Rendiconti del Seminario Matematico, Torino},
 issn = {0373-1243},
 volume = {43},
 pages = {333--353},
 year = {1985},
 keywords = {14M10,14M07}}

@article{Hartshorne2,
 author = {Hartshorne, Robin},
 title = {Stable reflexive sheaves},
 journal = {Mathematische Annalen},
 issn = {0025-5831},
 volume = {254},
 pages = {121--176},
 year = {1980},
 doi = {10.1007/BF01467074},
 keywords = {14F05,32L20,14D22,57R20},
 url = {https://eudml.org/doc/163486}
}

@article{MR,
    author = {Maggioni, R. and Ragusa, A.},
    title = {The Hilbert function of generic plane sections of curves of $\mathbb{P}^3$},
    journal = {Inventiones mathematicae},
  issn = {0020-9910},
 volume = {91},
 number = {2},
 pages = {253--258},
 year = {1988},
 doi = {10.1007/BF01389367},
 keywords = {14H45,14M05,13D03,14N05},
 url = {https://eudml.org/doc/143539}
}

@article{AAISY,
author = {Abdallah, Nancy and Altafi, Nasrin and Iarrobino, Anthony and Seceleanu, Alexandra and Yam{\'e}ogo, Joachim},
 title = {Lefschetz properties of some codimension three {Artinian} {Gorenstein} algebras},
 journal = {Journal of Algebra},
 issn = {0021-8693},
 volume = {625},
 pages = {28--45},
 year = {2023},
 doi = {10.1016/j.jalgebra.2023.03.005},
 keywords = {13E10,13D40,13H10},
}

@Misc{M2,
    author = {Grayson, Daniel R. and Stillman, Michael E.},
    title = {Macaulay2, a software system for research in algebraic geometry},
    howpublished = {Available at \url{http://www2.macaulay2.com}}
 }

@misc{BinMaculayDual,
 author = {Nasrin Altafi and Rodica Dinu and Sara Faridi and Shreedevi K. Masuti and Rosa M. Mir{\'o}-Roig and Alexandra Seceleanu and Nelly Villamizar},
 title = {Artinian {Gorenstein} algebras with binomial {Macaulay} dual generator},
 year = {2025},
 howpublished = {Preprint, {arXiv}:2502.18149 [math.{AC}] (2025)},
 url = {https://arxiv.org/abs/2502.18149},
 arXiv = {arXiv:2502.18149}
}

\end{document}